\newtheorem{Thm}{Theorem}[section]
\newtheorem{Cor}[Thm]{Corollary}
\newtheorem{Lem}[Thm]{Lemma}
\newtheorem{Prop}[Thm]{Proposition}
\newtheorem{Def}[Thm]{Definition}
\newtheorem{Rem}[Thm]{Remark}
\definecolor{dgreen}{rgb}{0,0.8,0}
\definecolor{ddgreen}{rgb}{0,0.5,0}
\newcommand{\Frame}[3]{\mathsf{Frame}_{#1 , #2}^{#3}}
\newtheorem{Ex}[Thm]{Example}
\def\Stable{\mathsf{Stable}}
\def\lesquare{\unlhd}
\def\gesquare{\unrhd}
\def\nlesquare{\not\!\!\lesquare\,\,\,}
\def\ngesquare{\not\!\gesquare\,\,}
\newcommand{\pos}{\mathrm{pos}}
\newcommand{\grade}{\mathrm{grade}}
\newcommand{\parking}{\mathrm{parking}}
\newcommand{\recurrent}{\mathrm{recurrent}}
\newcommand{\cumuledpos}{\mathrm{cumuledpos}}
\newcommand{\mypath}[2]{[#2]_{#1}}
\newcommand{\mypathfactor}[3]{[#2]_{{#1}|{#3}}}
\newcommand{\myframeconfiguration}[3]{\mathrm{Gauge}_{{#1},{#2}}(#3)}
\newcommand{\Para}{\mathsf{Polyo}}
\newcommand{\groupGG}{\mathcal{G}}
\newcommand{\GG}{{G}}
\newcommand{\ZZ}{\mathbb{Z}}
\newcommand{\figureornotfigure}[1]{#1} 
\begin{document}

\author[J-C. Aval]{Jean-Christophe Aval}
\thanks{J.-C. Aval is supported by ANR - PSYCO project (ANR-11-JS02-001)}
\address{LaBRI, CNRS, Universit\'e de Bordeaux,
251 Cours de la Lib\'eration, Bordeaux, France.}

\author[M. D'Adderio]{Michele D'Adderio}
\address{Universit\'e Libre de Bruxelles (ULB), D\'epartement de Math\'ematique,
Boulevard du Triomphe, B-1050 Bruxelles, Belgium.}

\author[M.Dukes]{Mark Dukes}
\address{Department of Computer and Information Sciences,
University of Strathclyde, Glasgow G1 1XH, United Kingdom.}

\author[Y. Le Borgne]{Yvan Le Borgne}
\address{PIMS, CNRS, Simon Fraser University, Burnaby BC, Canada\newline \& LaBRI, CNRS, Universit\'e de Bordeaux,
251 Cours de la Lib\'eration, Bordeaux, France.}

\title[Two operators on sandpile configurations]{Two operators on sandpile configurations, the sandpile model on the complete bipartite graph, and a Cyclic Lemma}

\date{\today}

\keywords{sandpile model, complete graphs, complete bipartite graph, cyclic lemma}

\maketitle

\begin{abstract}
  We introduce two operators on stable configurations of the sandpile
  model that provide an algorithmic bijection between recurrent and
  parking configurations. This bijection preserves their equivalence
  classes with respect to the sandpile group. The study of these operators in the
  special case of the complete bipartite graph ${K}_{m,n}$ naturally
  leads to a generalization of the well known Cyclic Lemma of
  Dvoretsky and Motzkin, via pairs of periodic bi-infinite paths in
  the plane having slightly different slopes. We achieve our results by 
  interpreting the action of these operators as an action on a point in the 
  grid $\mathbb{Z}^2$ which is pointed to by one of these pairs of paths.
  Our Cyclic lemma allows us to enumerate several classes of polyominoes, and 
  therefore builds on the work of Irving and Rattan (2009), Chapman et al. (2009), 
  and Bonin et al. (2003).
\end{abstract}

\section{Introduction}
\label{sec:in} The abelian sandpile model is a cellular automaton
on a graph. It was the first example of a dynamical system
exhibiting a fascinating property called {\it{self-organized
criticality}}, see \cite{SOC}. This model has since proved to be a
fertile ground from which many new and unlikely results have
emerged. One popular example is the correspondence between
recurrent configurations of the sandpile model on a graph and
spanning trees of the same graph, see e.g.~\cite{DHAR}.

In the abelian sandpile model on a (undirected connected
loop-free) graph, states are vectors which indicate the number of
grains present at every vertex of the graph. A vertex may be
toppled when the number of grains at that vertex is not less than
the degree of that vertex. When a vertex is toppled, one grain of
sand is sent along each incident edge to neighboring vertices. A
{\it{sink}} is a distinguished vertex in the graph. A
{\it{configuration}} is an assignment of grains to graph vertices, and 
a configuration is called {\it{stable}} if the number of grains at
each vertex other than the sink is less than the degree of that
vertex.

Two configurations are called {\it{toppling equivalent}} if there
is a sequence of topplings of one of the configurations that
results in the other. Given a configuration, the configurations
that can be obtained from it by any finite sequence of topplings
form the toppling equivalence class of this configuration. 
We study in particular the partition of stable configurations 
into toppling equivalence classes.

In Section 2 of this paper we consider two operators, $\psi$ and
$\varphi$, on stable sandpile configurations. These operators are,
in a sense, dual to one another. We prove that the fixed points of
the operator $\psi$ are the recurrent sandpile configurations and
the fixed points of $\varphi$ are the $\GG$-parking sandpile
configurations (an extension of the classical parking function to
an arbitrary directed graph $G$). 
The motivation in introducing the operators $\psi$
and $\varphi$ was to produce an algorithm that allows one to go
from recurrent configurations to $\GG$-parking configurations, and
vice-versa, within the same toppling equivalence class. As a
byproduct, we get two dual definitions of recurrent and
$\GG$-parking configurations.

In Section 3, we consider pairs of periodic bi-infinite paths in the plane defined by a pair of binary words. 
These binary words describe their respective (not necessarily) minimal periods.  
The two periods differ slightly since one period describes a lattice path from the origin to $(m,n)$ while the other describes a path from the origin to $(m-1,n)$.
For both of these finite paths, amend and prepend that same path to itself an infinite number of times to produce a pair of periodic paths with slightly different periods.
We prove a result that we call the Cyclic Lemma (Lemma 3.1) which consists of two parts.
The first part shows that the pairs of binary words can be partitioned into sets that each contain $m$ elements -- the sets consist of those pairs of binary words which define \emph{up to translation} the same pair of bi-infinite paths.
The second part shows that in every such set, there is precisely one pair of binary words that, when restricted to the rectangle of corners $(0,0)$ and $m\times n$, form a parallelogram polyomino. 
An immediate corollary of our Cyclic Lemma is an enumeration of
parallelogram polyominoes having an $m\times n$ bounding box. 

Chottin~\cite{CHO75} presented a result on words that is similar in spirit to our Cyclic Lemma. 
Our procedure is specifically designed to suit classes that are relevant in the context of the sandpile model. 
Moreover, our presentation is the perfect tool to deal with {\em labelled} parallelogram polyominoes, which were recently investigated in \cite{ABG}. 

The paper~\cite{YM} showed how configurations 
of the sandpile model on the complete bipartite graph $K_{m,n}$ 
that are both stable and sorted 
may be 
viewed as collections of cells in the plane.
By sorted we mean that configuration heights are weakly increasing in each part with respect to vertex indices. 
This correspondence was shown to have the property that a configuration is recurrent
if and only if the collection of corresponding cells in the plane
forms a parallelogram polyomino whose bounding box is an $m\times
n$ rectangle.
In Section 4 we restrict our attention to the complete bipartite
graph, and give an algorithm which computes $\varphi$ for stable
configurations on $K_{m,n}$.

In Section 5 we bring together the results of Sections 2, 3 and 4
while also building on the construction given in \cite{YM}. We
will represent sandpile configurations on $K_{m,n}$ as bi-infinite
pairs of paths in the plane. Configurations of the sandpile model
may be read from these bi-infinite paths by placing a `frame' at
certain points of intersection and performing measurements to
steps of the paths from this frame. We present and prove results
which show how the algorithmic calculation of $\varphi$ on
sandpile configurations in Section~4 may be interpreted as the
moving of the frame to a new point in the plane. We also give a
similar interpretation for $\psi$, and we deduce several
consequences of these results. Notably, we give a pictorial
description of $K_{m,n}$-parking configurations, in analogy with
the parallelogram polyominoes for recurrent configurations.

In Section 6 enumerative results about bi-infinite paths are given
in the case when one of the bi-infinite paths has a particularly regular
`staircase shape'.
Our work complements recent work of Irving and
Rattan~\cite{IR} and Chapman, Chow, Khetan, Moulton,
Waters~\cite{CCKMW} concerning the enumeration of lattice paths with
respect to a cyclically shifting boundary.

Finally, in Section 7 we show how results concerning the behavior of the
operators $\varphi$ and $\psi$ on the complete graph $K_{n}$ can
be derived from those on $K_{m,n}$.

The results in this paper arose from studying statistics on
parallelogram polyominoes and a symmetric functions interpretation
of a bi-statistic generating function that relates these statistics to
diagonal harmonics~\cite{ADDHL}. Throughout this paper, the phrase
`Cyclic Lemma' refers to our Lemma~\ref{theo:cyclic_lemma} unless
stated otherwise.

\section{Two operators on general sandpile configurations}

In this section we define two operators on sandpile
configurations for an undirected connected loop-free graph $G$. We
will show that the fixed points of these operators correspond to
recurrent configurations and $\GG$-parking configurations of the
sandpile model on $G$. Following this we will make some
observations concerning the injectivity of these operators, and
present a relation between them in terms of an operator called
$\beta$. These results are necessary in order to deal with the
specialization of $G$ to the graph $K_{m,n}$ from Section 4
onwards.

Consider an undirected connected loop-free graph $G$ with vertex
set $V=\{v_1,v_2,\dots,v_{n+1}\}$. We call the vertex $v_{n+1}$
the \textit{sink}. Let $d_i$ be the degree of the vertex $v_i$,
and $e(i,j)\in\{0,1\}$ the indicator function of an edge between $v_i$ and $v_j$. 
Let $\alpha_i\in \mathbb{Z}^{n+1}$ be a vector with $1$ in the $i$-th
position and $0$ elsewhere. Define the \textit{toppling} operators
$\Delta_i:=d_i\alpha_i-\sum_{j\neq i}e(i,j)\alpha_j$ for
$i=1,2,\dots,n+1$. Notice that $\sum_{j=1}^{n+1}\Delta_j =0$.

A configuration on $G$ is a vector $c=(c_1,\ldots,c_{n+1}) \in
\mathbb{Z}^{n+1}$. Most of the time we will consider
configurations modulo the height of the sink $v_{n+1}$. Thus two
configurations will be called equal or equivalent if their heights
at all vertices, other than the sink, are the same. The number of
grains at the sink is immaterial and we often record this number
as `$*$'.

Let  $c=(c_1,c_2,\dots,c_{n+1})$ be a configuration on $G$. We
call the sum $\sum_{i=1}^n c_i$ the \textit{height} of the
configuration. We say that $c$ is \textit{non-negative} if
$c_i\geq 0$ for all $1\leq i\leq n$ and that $c$ is
{\it{semi-stable}} if $c_i<d_i$ for all $1\leq i\leq n$. If a
configuration $c$ is both non-negative and semi-stable, then we
call it {\it{stable}}.

If $c$ is a non-negative configuration such that $c-\Delta_i$ is still 
non-negative, then we say the vertex $v_i$ is \textit{unstable}. By
convention we can always topple the sink. Stable configurations
are the ones for which there is no vertex that can be toppled
except the sink. Let $\Stable(G)$ be the set of all stable
configurations on $G$.

\begin{Def}
A configuration $c \in \Stable(G)$ is \textit{recurrent} if, after
toppling the sink, there is an order of the remaining vertices in
which we can topple every vertex of $G$ exactly once in that order
(thereby arriving back to the original configuration $c$).
\end{Def}

Given $A\subseteq \{1,2,\dots,n,n+1\}$, we define
$\Delta_A:=\sum_{j\in A}\Delta_j$. By convention
$\Delta_{\emptyset}$ is the zero vector.

\begin{Def}[see \cite{PS}]
A non-negative configuration $c \in \Stable(G)$ is a
\textit{$\GG$-parking configuration} if the configuration
$c-\Delta_A$ is not non-negative, for all non-empty $A\subseteq
\{1,2,\dots,n\}$.
\end{Def}

In order to define the operator $\psi$ on stable configurations of
a graph $G$ we need the following terminology concerning orders of
vertices and sets thereof. Fix a total order $<_1$ on the
vertices, for example $v_1<_1 v_2<_1 \cdots<_1 v_{n+1}$, which
corresponds to the order $1<2<\cdots<n+1$ on the indices. Next
define the order $\prec$ on the subsets of $\{1,2,\dots,n\}$: if
$A$ and $B$ are subsets $\{1,2,\dots,n\}$, then $A \prec B$ if (i)
$|A|<|B|$, or (ii) if $|A|=|B|$ and $A$ is smaller than $B$ in the
lexicographic order induced by the fixed order $<_1$ on the
vertices.

\begin{Def}[of $\psi$]
\label{def_psi} Given $c \in \Stable (G)$, let
\begin{eqnarray*}
\psi(c) &=& \left\{
\begin{array}{ll}
c & \begin{array}{l}\mbox{if $c+\Delta_A \not\in\Stable(G)$
for all non-empty $A\subseteq \{1,2,\dots,n\}$}\end{array}\\[0.5em]
c+\Delta_A & \begin{array}{l}\mbox{otherwise, where
$A\subseteq \{1,2,\dots,n\}$ is non-empty and minimal}\\
\mbox{(w.r.t. $\prec$) such that $c+\Delta_A
\in\Stable(G)$.}\end{array}
\end{array}
\right.
\end{eqnarray*}
\end{Def}

\begin{Thm}
\label{psithm} The fixed points of $\psi$ are exactly the
recurrent configurations of $G$.
\end{Thm}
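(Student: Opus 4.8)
The plan is to reduce the fixed-point condition of $\psi$ to a combinatorial condition on $c$ that matches Dhar's burning criterion for recurrence. By definition, $\psi(c)=c$ if and only if $c+\Delta_A\notin\Stable(G)$ for every nonempty $A\subseteq\{1,\dots,n\}$, so the first step is to understand exactly when $c+\Delta_A$ is stable. A direct computation of coordinates gives, for $i\in A$, $(c+\Delta_A)_i=c_i+d_i-\sum_{j\in A}e(i,j)$, and for $i\notin A$ with $i\le n$, $(c+\Delta_A)_i=c_i-\sum_{j\in A}e(i,j)$. Checking the two defining inequalities of $\Stable(G)$ against these values, I would observe that non-negativity for $i\in A$ and semi-stability for $i\notin A$ hold automatically (using $c_i\ge 0$, $c_i<d_i$, and $\sum_{j\in A}e(i,j)\le d_i$), so that $c+\Delta_A\in\Stable(G)$ is equivalent to the two genuine constraints
\begin{align*}
\text{(a)}\quad & c_i<\textstyle\sum_{j\in A}e(i,j)\ \text{for all } i\in A,\\
\text{(b)}\quad & c_i\ge\textstyle\sum_{j\in A}e(i,j)\ \text{for all } i\notin A,\ 1\le i\le n.
\end{align*}

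Call a nonempty set $A$ \emph{forbidden} if it satisfies (a); this is precisely the statement that no vertex of $A$ can topple even when every vertex outside $A$ (including the sink) has already toppled. The second step is the crux: the existence of a nonempty set satisfying (a) is exactly the failure of recurrence, but stability of $c+\Delta_A$ also demands (b). I would resolve this by passing to the maximal forbidden set. First I would check that the union of two forbidden sets is forbidden (since adding vertices to $A$ only increases each count $\sum_{j\in A}e(i,j)$), so there is a unique maximal forbidden set $A^\ast$ (possibly empty). Then, if some $i\notin A^\ast$ violated (b), the set $A^\ast\cup\{i\}$ would again satisfy (a) --- using $e(i,i)=0$ for the new vertex and monotonicity of the counts for the old ones --- contradicting maximality; hence $A^\ast$ satisfies (b) as well. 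Consequently $c+\Delta_{A^\ast}\in\Stable(G)$, and combined with the trivial implication ``$c+\Delta_A$ stable $\Rightarrow A$ forbidden'', I obtain the equivalence: there exists a nonempty $A$ with $c+\Delta_A\in\Stable(G)$ if and only if $A^\ast\neq\emptyset$.

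The final step is to identify $A^\ast$ with the complement of Dhar's burning process. Interpreting the definition of recurrence, toppling the sink adds $e(n+1,i)$ grains to each $v_i$, after which a vertex burns (topples) as soon as $c_i$ plus the number of already-burnt neighbors reaches $d_i$; by the abelian property the set of vertices that ever burn is independent of the order, and $c$ is recurrent exactly when all non-sink vertices burn. A short induction shows that a vertex lies outside the burnt set if and only if it lies in some forbidden set, so the unburnt set equals $A^\ast$. Therefore $c$ is recurrent $\iff A^\ast=\emptyset\iff$ no nonempty $A$ has $c+\Delta_A\in\Stable(G)\iff\psi(c)=c$, which is the claim. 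The main obstacle is the extra non-negativity requirement (b): it prevents an arbitrary forbidden set from witnessing $\psi(c)\neq c$, and the whole argument hinges on showing that the maximal forbidden set automatically satisfies it.
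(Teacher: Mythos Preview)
Your proof is correct and takes a genuinely different route from the paper's. The paper argues each direction directly from the definitions: for ``recurrent $\Rightarrow$ fixed point'' it assumes a nonempty $A$ with $c+\Delta_A$ stable, picks the \emph{first} vertex of $A$ appearing in a recurrent toppling order $(a_1,\dots,a_n)$, and derives a numerical contradiction between the topple-ability of that vertex and the semi-stability of $c+\Delta_A$ at it; for ``fixed point $\Rightarrow$ recurrent'' it uses the instability of each $c-\Delta_{n+1}-\Delta_{a_1}-\cdots-\Delta_{a_k}$ to iteratively peel off a full toppling sequence, essentially running Dhar's burning algorithm inline without naming it. You instead unpack stability of $c+\Delta_A$ into the two coordinate conditions (a) and (b), recognize (a) as the classical forbidden-subconfiguration condition, and add the structural observation that the \emph{maximal} forbidden set $A^\ast$ automatically satisfies (b); this yields the clean equivalence ``some $c+\Delta_A$ is stable $\iff$ some nonempty forbidden set exists'', after which you hand off to Dhar's criterion. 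Your route is less self-contained --- it invokes the burning criterion and the abelian property rather than reproving them --- but it buys an explicit canonical witness $A^\ast$ whenever $\psi(c)\neq c$ and makes transparent the link to allowed configurations that the paper only remarks on after its proof.
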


\begin{proof}
Suppose that $c \in \Stable(G)$ is recurrent. This means there
exists some permutation $(a_1,a_2,\dots,a_n)$ of $(1,2,\dots,n)$
such that $c-\Delta_{n+1}-\sum_{j=1}^i\Delta_{a_j}$ is
non-negative for all $i=0,1,2,\dots,n$. Let
$X:=\{1,2,\dots,n,n+1\}$, and suppose by contradiction that there
exists $A\subseteq \{1,2,\dots,n\}$ such that
$c+\Delta_A=c-\Delta_{X\setminus A}$ is stable.

Let $i$ be minimal such that $a_i\in A$. For $Y\subseteq X$ and
$v$ a vertex, let $\mathrm{deg}_Yv$ denote the number of edges
from $v$ to a vertex in $Y$ as the other vertex. 

Then, since $a_j\notin A$ for $j<i$, and by the fact that we could
topple $v_{a_i}$ in the configuration
$c-\Delta_{n+1}-\sum_{j=1}^{i-1}\Delta_{a_j}$, we have
\begin{eqnarray*}
c_{a_i}+\mathrm{deg}_{X\setminus A}v_{a_i} & \geq &
c_{a_i}+e(a_i,n+1)+\sum_{j=1}^{i-1}e(a_i,a_j)\\
& \geq & \mathrm{deg}_{X}v_{a_i}.
\end{eqnarray*}

But by the stability of $c+\Delta_A$ we must also have
$$c_{a_i}+\mathrm{deg}_Xv_{a_i}-\mathrm{deg}_{A}v_{a_i}
    =c_{a_i}+\mathrm{deg}_{X\setminus A}v_{a_i}\leq
    \mathrm{deg}_Xv_{a_i}-1.$$

Putting these together, we get
$$\mathrm{deg}_Av_{a_i}\leq c_{a_i}\leq \mathrm{deg}_Av_{a_i}-1,$$
a contradiction. Thus no such (non-empty) $A$ exists, and
$c+\Delta_A \not\in\Stable(G)$ for all non-empty $A \subseteq
\{1,\ldots,n\}$. Therefore $\psi(c)=c$ from
Definition~\ref{def_psi}.

Suppose now that $c$ is a fixed point of $\psi$. This means that
$c+\Delta_A=c-\Delta_{X\setminus A}$ is not stable for all
non-empty $A\subseteq \{1,2,\dots,n\}$. Since
$c-\Delta_{n+1}=c+\sum_{i\neq n+1}\Delta_{i}$ is not stable, it
has a vertex, say $v_{a_1}\neq v_{n+1}$, that can be toppled. But
then $c-\Delta_{n+1}-\Delta_{a_1}$ is also not stable and it has a
vertex, $v_{a_2}$ say, different from $v_{a_1}$ and $v_{n+1}$,
that can be toppled.

Iterating this argument we get a sequence $(a_1,a_2,\dots,a_n)$ of
distinct indices, hence a permutation of $\{1,2,\dots,n\}$, such
that, after toppling the sink, we can topple the other vertices in
that order. In other words, $c$ is recurrent.
\end{proof}

Notice that the condition in the theorem is related to the
so-called \emph{allowed configurations} (cf. \cite{PS}).

We now define another operator, $\varphi$, which is a sort of
``dual'' operator to $\psi$. We use the same total order $<_1$ on
the vertices that we used for $\psi$.

\begin{Def}[of $\varphi$]
\label{def_phi} Given $c \in \Stable (G)$, let
\begin{eqnarray*}
\varphi(c) &=& \left\{ \begin{array}{ll} c &
\begin{array}{l}\mbox{if $c-\Delta_A \not\in\Stable(G)$ for all
non-empty
    $A\subseteq \{1,2,\dots,n\}$}\end{array}\\[0.5em]
c-\Delta_A & \begin{array}{l}\mbox{otherwise, and $A\subseteq
\{1,2,\dots,n\}$
    is non-empty and minimal }\\
\mbox{(w.r.t.$\prec$) such that $c-\Delta_A \in \Stable(G)$.}
\end{array}
\end{array} \right.
\end{eqnarray*}
\end{Def}

\begin{Thm}
\label{varphithm} The fixed points of $\varphi$ are exactly the
$\GG$-parking configurations.
\end{Thm}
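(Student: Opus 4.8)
The plan is to unwind both sides of the equivalence. By Definition~\ref{def_phi}, a configuration $c\in\Stable(G)$ is a fixed point of $\varphi$ exactly when $c-\Delta_A\notin\Stable(G)$ for every non-empty $A\subseteq\{1,\dots,n\}$, whereas being $\GG$-parking means that $c-\Delta_A$ fails to be non-negative for every such $A$. Since membership in $\Stable(G)$ requires non-negativity, the implication ``$\GG$-parking $\Rightarrow$ fixed point'' is immediate: if no $c-\Delta_A$ is non-negative, then a fortiori none is stable, so the first clause of Definition~\ref{def_phi} applies and $\varphi(c)=c$.

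For the converse I would argue by contraposition. Assume $c$ is not $\GG$-parking, so that $c-\Delta_A$ is non-negative for some non-empty $A$, and produce a non-empty $B$ with $c-\Delta_B\in\Stable(G)$, which shows $c$ is not fixed. The main device is the family $\mathcal{F}=\{A\subseteq\{1,\dots,n\}:c-\Delta_A\text{ is non-negative}\}$. First I would check that $\mathcal{F}$ is closed under unions: writing $(c-\Delta_A)_k=c_k-d_k\,\mathbbm{1}[k\in A]+\mathrm{deg}_Av_k$, passing from $A$ to $A\cup B$ only enlarges the term $\mathrm{deg}_{(\cdot)}v_k$ while the subtracted $d_k$ appears at most once, so coordinatewise $c-\Delta_{A\cup B}$ dominates whichever of $c-\Delta_A$, $c-\Delta_B$ is relevant at each vertex, and is therefore non-negative. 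Hence $\mathcal{F}$ has a unique maximal element $A^\ast$, which is non-empty by hypothesis.

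It then remains to prove that $B:=A^\ast$ works, i.e.\ that $c-\Delta_{A^\ast}$ is not only non-negative but also semi-stable. Suppose some non-sink vertex $v_i$ is unstable in $c-\Delta_{A^\ast}$, meaning $(c-\Delta_{A^\ast})_i\ge d_i$. If $i\notin A^\ast$, then toppling $v_i$ keeps its own height non-negative and can only increase the other heights, so $c-\Delta_{A^\ast\cup\{i\}}$ is non-negative, contradicting the maximality of $A^\ast$. The remaining case $i\in A^\ast$ is where I expect the only real subtlety, and it is ruled out by a counting inequality: from $(c-\Delta_{A^\ast})_i=c_i-d_i+\mathrm{deg}_{A^\ast}v_i\ge d_i$ we get $c_i+\mathrm{deg}_{A^\ast}v_i\ge 2d_i$, whereas $c_i\le d_i-1$ (as $c$ is semi-stable) and $\mathrm{deg}_{A^\ast}v_i\le d_i$ force $c_i+\mathrm{deg}_{A^\ast}v_i\le 2d_i-1$, a contradiction. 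Thus $c-\Delta_{A^\ast}$ has no unstable non-sink vertex and so lies in $\Stable(G)$; since $A^\ast\neq\emptyset$, the configuration $c$ is not a fixed point of $\varphi$. The crux of the whole argument is the observation that stability of $c$ (giving $c_i\le d_i-1$) prevents an already-toppled vertex from becoming unstable again, which is precisely what makes the maximal toppling set $A^\ast$ stable.
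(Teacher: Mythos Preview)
Your proof is correct and takes essentially the same approach as the paper. The paper iteratively enlarges a witnessing set $A$ one vertex at a time (showing that any unstable vertex of $c-\Delta_A$ must lie outside $A$ and can be adjoined) rather than passing directly to the maximal $A^\ast$ via union-closure of $\mathcal{F}$, but the decisive inequality ruling out the case $i\in A$ is identical in both arguments.
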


\begin{proof}
It is straightforward to show that a $\GG$-parking configuration
is a fixed point of $\varphi$ since stable configurations are
non-negative.

Suppose the converse is not true. Let $c$ be a fixed point of
$\varphi$, and suppose that there exists a non-empty $A\subseteq
\{1,2,\dots,n\}$ such that $c-\Delta_A$ is still non-negative.
Since $c$ is a fixed point of $\varphi$, $c-\Delta_A$ is also
unstable (but non-negative). Therefore there must be an $i$ such
that we can topple $v_i$. There are two cases to consider.

{\it Case  $i\in A$}: In this case, using the notation of
Theorem~\ref{psithm}, instability implies
$$
c_i-\mathrm{deg}_{X}v_i+\mathrm{deg}_{A}v_i=c_i-\mathrm{deg}_{X\setminus
A}v_i\geq \mathrm{deg}_Xv_i.
$$
However $c_i-\mathrm{deg}_{X\setminus A}v_i\leq c_i\leq
\mathrm{deg}_Xv_i-1$, since $c$ is stable. This gives a
contradiction. Therefore $i$ cannot be in $A$.

{\it Case $i\notin A$}: In this case consider
$c-\Delta_{A\cup\{i\}}$. This vector (configuration) is
non-negative, since $c-\Delta_A$ was non-negative, and we could
topple $v_i$, but it is also unstable, since $c$ is a fixed point
of $\varphi$.

Iterating this argument, we can enlarge our set $A$ until we get
to the point were the second case does not occur. But this will
give a contradiction. This completes the proof.
\end{proof}

\begin{Rem}
The last two theorems provide a perfect duality between the
definitions of recurrent and $G$-parking configurations, a
desirable fact that partially motivated our investigations.
\end{Rem}

Set $\groupGG:=\mathbb{Z}^{n+1}/\langle \alpha_{n+1}\rangle$. We
will call $\groupGG/\langle \{\Delta_j\}_{j=1}^{n+1}\rangle$ the
\textit{sandpile group}. We call the cosets of the sandpile group
\textit{classes} so that we can talk about the class of a
configuration. It is well known that in each class there is
exactly one recurrent configuration and exactly one $\GG$-parking
configuration (see for example \cite[Theorem 1]{cori} and
\cite[Proposition 3.1]{baker}). There is an easy bijection between
recurrent and $\GG$-parking configurations (cf. \cite[Lemma
5.6]{baker}), but under this bijection configurations which
correspond to one-another do not necessarily lie in the same
class: see Remark~\ref{rem:beta}.

However, as we stated in the introduction, our motivation in
introducing the operators $\psi$ and $\varphi$ was to produce an
algorithm that allows one to pass from a recurrent configuration
to a $\GG$-parking configuration in the same class, and vice
versa.

For a configuration $c=(c_1,\ldots,c_{n+1})$ on an undirected
connected loop-free graph $G$, let $D(c)=(d_0,d_1,\ldots)$ be the
distribution of the distances of grains to the sink $v_{n+1}$. The
distance of $v_i$ to $v_{n+1}$ is the minimal number of edges on a
path from $v_i$ to $v_{n+1}$ in $E(G)$. In other words
$d_k=\sum_{v_i} c_i$ where $v_i$ runs over vertices whose distance
from the sink $v_{n+1}$ is $k$. Note that $d_0=c_{n+1}$.

\begin{Def}[of $<_2$]
Let $G$ be a graph with $V(G)=\{v_1,\ldots,v_{n+1}\}$. Let
$c=(c_1,\ldots,c_{n+1})$ and $c'=(c'_1,\ldots,c'_{n+1})$ be two
configurations in $\Stable(G)$ with $D(c)=(d_0,d_1,\ldots)$ and
$D(c')=(d'_0,d'_1,\ldots )$. If $D(c)$ is lexicographically
smaller then $D(c')$ then we write $c<_2 c'$.
\end{Def}

Observe that when applied to any configuration which is not recurrent,
the operator $\psi$ is strictly decreasing with respect to the order $<_2$.
Therefore, if we start with a $\GG$-parking
configuration, iterating the operator $\psi$ we will get a recurrent
configuration in finitely many steps. 
In the same way, when applied to a any configuration which is not $\GG$-parking,
the operator $\varphi$ is strictly increasing with respect to the order $<_2$.
Thus, starting from a recurrent configuration, and iterating the operator $\varphi$ we will
get a $\GG$-parking configuration in finitely many steps.
As a consequence, we have a bijection
between the recurrent configurations of $G$ and the $\GG$-parking
configurations that clearly preserves the classes.

\begin{Rem}\label{rem:rem}
The operator $\psi$ (resp.  $\varphi$) is, in general, not
injective even if restricted to the stable configurations that are
not recurrent (resp.  $\GG$-parking). However, we will see that
both in the case of ${K}_{n+1}$ and in the case of ${K}_{m,n}$, if
$c$ is a stable configuration which is not recurrent, then
$\varphi(\psi(c))=c$, and if $c$ is a stable configuration which
is not parking, then $\psi(\varphi(c))=c$.

Moreover, in these cases the operators $\psi$ and $\varphi$ are
inverses of each others in the sense of semigroups, i.e.
$\psi(\varphi(\psi(c)))=\psi(c)$ and
$\varphi(\psi(\varphi(c)))=\varphi(c)$ for all stable
configurations $c$. See Remark~\ref{rem:inv}.
\end{Rem}

\begin{Ex}\label{ex:graph}
\protect{\rm{ In this example we illustrate two applications of
Definition~\ref{def_psi} to sandpile configurations on a graph.
These examples will then be used to show that $\psi$ need not be
injective {\bf{even}} if restricted to non-recurrent
configurations. Consider the graph $G=(V,E)$ with
$V=\{v_1,\ldots,v_7\}$ and
$$E=\left\{ \{v_1,v_2\}, \{v_1,v_3\}, \{v_2,v_3\}, \{v_3,v_4\}, \{v_4,v_5\}, \{v_4,v_6\},
\{v_5,v_6\}, \{v_5,v_7\}, \{v_6,v_7\} \right\}.$$ Let vertex $v_7$
be the sink. 
This graph is illustrated by Figure \ref{fig:example-graph}.
\begin{figure}[H]
\includegraphics[width=6cm]{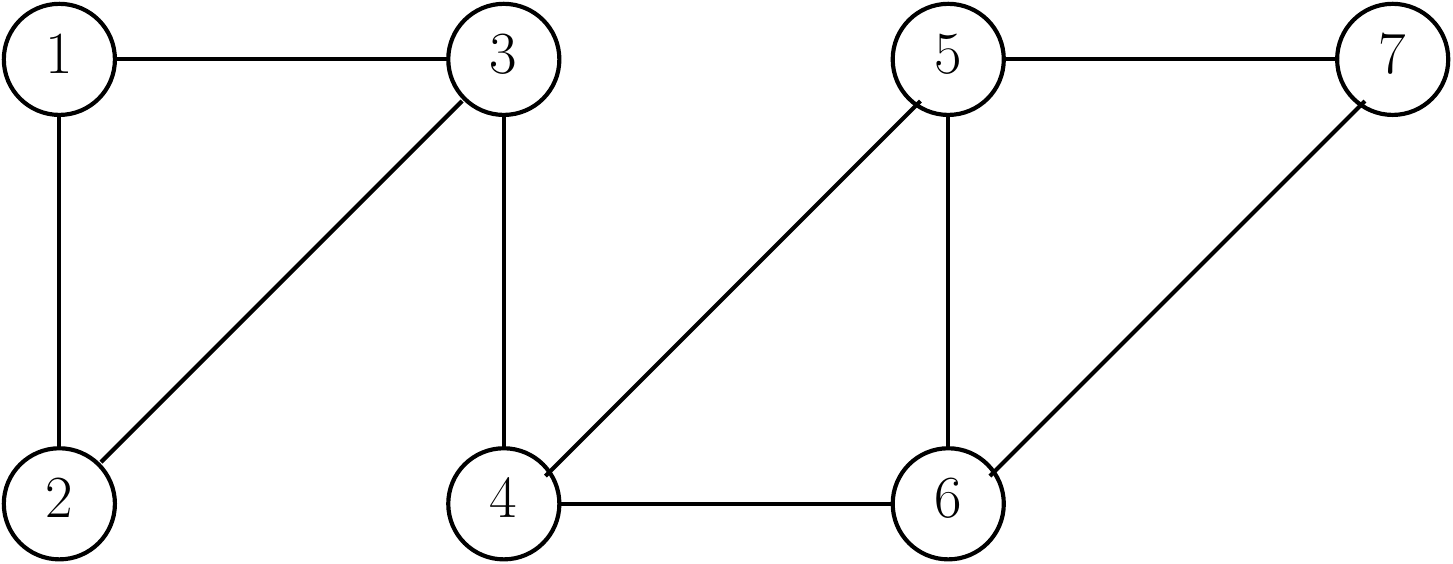}
\caption{\label{fig:example-graph}The graph $G$ of Example \ref{ex:graph}.}
\end{figure}
Consider the sandpile configuration
$c'=(0,0,2,0,2,2,*)$ on $G$. Applying Definition~\ref{def_psi} one
finds that $A=\{1,2\}$ is the minimal non-empty subset (w.r.t.
$\prec$) of $\{1,2,\ldots,7\}$ such that $c'+\Delta_A \in
\Stable(G)$. Thus we have
\begin{eqnarray*}
\psi(c') &=& c'+\Delta_1 + \Delta_2 \;=\; (0,0,2,0,2,2,*) + (2,-1,-1,0,0,0,*) + (-1,2,-1,0,0,0,*) \\
&=& (1,1,0,0,2,2,*).
\end{eqnarray*}
Next, consider the configuration $c''=(1,1,0,2,0,0,*)$. Applying
Definition~\ref{def_psi} we find that the minimal subset $A$ for
which $c''+\Delta_A$ is stable is $A=\{5,6\}$. Therefore
\begin{eqnarray*}
\psi(c'') &=& c''+\Delta_5 + \Delta_6 \;=\; (1,1,0,2,0,0,*) + (0,0,0,-1,3,-1,*) + (0,0,0,-1,-1,3,*)\\
&=& (1,1,0,0,2,2,*).
\end{eqnarray*}
Finally, since $\psi(c')=\psi(c'')$ but $c' \neq c''$, the
operator $\psi$ is not injective {\bf{even}} if restricted to
non-recurrent configurations. }}
\end{Ex}

We conclude this section by showing how operators $\varphi$ and $\psi$
are {\em conjugate}. Let us introduce a well-known involution $\beta$
defined on any configuration $c$ whose $i$-th component is
$$\beta(c)_i=(d_i-1)-c_i.$$
\begin{Rem}\label{rem:beta}
$\beta$ maps non-negative configurations to semi-stable
configurations. Of course $\beta$ is also an involution on stable
configurations. Moreover, this mapping induces a bijection from
parking to recurrent configurations \emph{that does not preserve
classes}.
\end{Rem}

\begin{Prop}\label{prop:conjugate}
One has the following relations between operators $\varphi$,
$\psi$ and $\beta$:
$$\varphi\cdot \beta=\beta\cdot \psi.$$
\end{Prop}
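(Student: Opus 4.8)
The plan is to reduce the identity $\varphi\cdot\beta=\beta\cdot\psi$ to a single affine relation between $\beta$ and the toppling vectors, together with the fact (Remark~\ref{rem:beta}) that $\beta$ is an involution on $\Stable(G)$. Fix $c\in\Stable(G)$. Since $\beta$ restricts to an involution of stable configurations, $\beta(c)$ is again stable, so both $\psi(c)$ and $\varphi(\beta(c))$ are well defined and I only need to compare these two stable configurations.

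First I would record the elementary computation that, for every $A\subseteq\{1,\dots,n\}$,
$$\beta(c+\Delta_A)=\beta(c)-\Delta_A ,$$
which holds coordinate-wise because $\beta(x)_i=(d_i-1)-x_i$ is affine with linear part $-\mathrm{id}$, so adding $\Delta_A$ inside $\beta$ turns into subtracting it outside. This is the crux of the argument: it exchanges the ``$+\Delta_A$'' appearing in the definition of $\psi$ with the ``$-\Delta_A$'' appearing in the definition of $\varphi$.

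Next I would combine this with the stability bookkeeping. Because $\beta$ is an involution of $\Stable(G)$, a configuration is stable if and only if its image under $\beta$ is; applying this to $c+\Delta_A$ and using the displayed identity yields, for every non-empty $A\subseteq\{1,\dots,n\}$, the equivalence
$$c+\Delta_A\in\Stable(G)\iff \beta(c)-\Delta_A\in\Stable(G).$$
Consequently the collection of non-empty subsets $A$ that witness a move of $\psi$ at $c$ (those with $c+\Delta_A\in\Stable(G)$) coincides exactly with the collection of subsets that witness a move of $\varphi$ at $\beta(c)$ (those with $\beta(c)-\Delta_A\in\Stable(G)$).

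Finally I would conclude by a short case analysis. Since the order $\prec$ on subsets is fixed once and for all and does not depend on the configuration, the two operators select the same $\prec$-minimal subset from the (identical) collection of witnesses. If that collection is empty then $\psi(c)=c$ and $\varphi(\beta(c))=\beta(c)=\beta(\psi(c))$. Otherwise, letting $A$ be the $\prec$-minimal witness, one has $\psi(c)=c+\Delta_A$ while $\varphi(\beta(c))=\beta(c)-\Delta_A=\beta(c+\Delta_A)=\beta(\psi(c))$, again by the displayed identity. In both cases $\varphi(\beta(c))=\beta(\psi(c))$, as required. I do not expect a genuine obstacle here; the only point demanding care is verifying that $\beta$ truly preserves stability in both directions (not merely non-negativity), which is exactly the content of Remark~\ref{rem:beta} and is what guarantees that the two collections of witnessing subsets are literally equal rather than merely related.
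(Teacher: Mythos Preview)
Your proof is correct and follows essentially the same approach as the paper: both rest on the identity $\beta(c+\Delta_A)=\beta(c)-\Delta_A$ and the consequence that $c+\Delta_A$ is stable iff $\beta(c)-\Delta_A$ is stable. You have simply spelled out the case analysis (empty versus non-empty set of witnesses, common $\prec$-minimal $A$) that the paper leaves implicit.
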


\begin{proof}
This proposition readily comes from the observation that for any
subset $A$ of $ \{1,2,\dots,n\}$
$$\beta(c+\Delta_A)=\beta(c)-\Delta_A$$
which implies that $c+\Delta_A$ is stable if and only if
$\beta(c)-\Delta_A$ is also stable.
\end{proof}

\section{A cyclic lemma counting parallelogram polyominoes in a $m\times n$ rectangle}\label{sec:cyclic_lemma}

The aim of this section is to present and prove the so called
{\it{Cyclic Lemma}} (Lemma~\ref{theo:cyclic_lemma}) for pairs of
paths in the plane. This lemma tells us how pairs of infinite
paths in the plane, which are formed from two binary words having
almost identical parameters in terms of the fixed global
parameters $m$ and $n$, can be partitioned into different classes
with respect to some points of intersection. The lemma shows us
that each of these classes have exactly the same size. Further to
this, we prove that every class corresponds to a unique
parallelogram polyomino whose bounding box is an $m\times n$
rectangle.

We first introduce some terminology pertinent to the remainder of the paper.
This terminology will be illustrated in the example of Subsection~\ref{THEEXAMPLE}.
Some readers may prefer to skip directly to this example.

An {\it{$(m,n)$-binomial word}} is any word $w$ over the alphabet
$\{N,E\}$ consisting of $m$ letter $E$'s and $n$ letter $N$'s. We
let $B_{m,n}$ be the set of all $(m,n)$-binomial words, of which
there are ${m+n\choose n}$ many. A vertex
$x=(x_1,x_2)\in\mathbb{Z}^2$ of the square lattice together with a
binomial word $w$ defines a path $\mypath{x}{w}$: this paths
starts at $x$ and is made up of unit steps given by the letters of
$w$ wherein $N$ corresponds to a north step $(0,1)$ and $E$ to an
east step $(1,0)$. By abuse of notation we will use the terms step
$\leftrightarrow$ letter and path $\leftrightarrow$ word
interchangeably. For a step $s$ in a path, we denote by
$(X_1(s),X_2(s))$ the coordinates of the starting vertex of this
step. We will sometimes index, non-ambiguously, the steps of a
binomial path: a step $N$ is called $N_i$ where $i$ is $X_1(N)$,
the ordinate of the starting vertex of this step, and a step $E$
is called $E_j$ where $j=X_2(E)$, the abscissa of the starting
vertex of this step. In a path $\mypath{x}{w}$, we let
$\mypathfactor{y}{w}{k}$ be the factor of this path that starts
from vertex $y$ in $\mypath{x}{w}$ and consists of the $k$ steps
that follow it in $\mypath{x}{w}$.

Employing this terminology to define parallelogram polyominoes, we
have the following: A polyomino $P$ is a {\emph{$m \times n$
parallelogram polyomino}} iff it is the set of unit cells of a
square lattice enclosed by a pair $(u,v)$ of $(m,n)$-binomial
paths intersecting only at their endpoints where
$u=\mypath{(0,0)}{Nu''E}$ and $v=\mypath{(0,0)}{Ev''N}$. If one
adds a final red east step to the red path in
Figure~\ref{fig:phi0} then the enclosed region is a $4 \times 6$
parallelogram polyomino. Let $\Para_{m,n}$ be the set of all
parallelogram polyominoes in an $m\times n$ rectangle, i.e. having
an $m\times n$ bounding box.

The number of primes on a sub-path of a path helps us to remember
the number of deleted letters/steps. For example $u''$ means that
two steps (or letters) have been removed from the path $u$, and
$\ell'$ means that only one letter has been removed from $\ell$.

In this definition, the steps $N_0$ and $E_{m-1}$ in $u$ and $E_0$
and $N_{n-1}$ are forced so that counting the polyominoes of
$\Para_{m,n}$ is equivalent to counting the number of
non-intersecting pairs of $(m-1,n-1)$-binomial paths
$(\mypath{(1,0)}{u''},\mypath{(0,1)}{v''})$ which end at positions
$((m-1,n),(m,n-1))$. Let us note that $u''=
\mypathfactor{(1,0)}{u}{m+n-2}$. This remark allows us to
recognize the framework of the classical LGV-lemma (see
\cite{LGV}) and then to count the polyominoes via the following
determinant:
$$|\Para_{m,n}|=\left|\begin{array}{cc}
    {m+n-2\choose m-1} & {m+n-2\choose m} \\[1em]
    {m+n-2\choose m-2} & {m+n-2 \choose m-1} \end{array}\right|
= \frac{1}{m}{m+n-2\choose m-1}{m+n-1\choose m-1}.$$

The rightmost expression in the preceding equation bears a
resemblance to the Catalan numbers $\frac{1}{2n+1}{2n+1\choose
n}$. Catalan numbers count the number of Dyck words of semi-length
$n$. The Dvoretzky-Motzkin (\cite{DVOR}) proof that the number of
Dyck words is given by the Catalan numbers involved using a cyclic
lemma that acted on partitions of all binomials words in
$B_{n,n+1}$, and showed that there was precisely one Dyck word
with an additional final east step in each of the partitions.
Every part of the partition had $2n+1$ elements, and this explains
the factor $1/(2n+1)$ in the expression for the Catalan numbers.

Their approach, combined with the similarity of the expressions,
suggests it may be possible to employ similar machinery in our
setting, and therefore reprove the expression for $|\Para_{m,n}|$
above. Indeed, this is exactly what we will do next to a partition
$\Pi_{m,n}$ of non-constrained pairs of binomial paths in
$B_{m-1,n-1}\times B_{m-1,n}$.

\subsection{An example illustrating the terminology}\label{THEEXAMPLE}

The example given in this subsection illustrates the terminology
of this section. In the example we compute a part $\pi(u'',\ell')$
in our Cyclic Lemma (Lemma~\ref{theo:cyclic_lemma}). There is some
yet-to-be-explained information contained in these figures, since
this example will later allow us to read the iterates of the
operator $\varphi$ of a configuration on $K_{4,6}$ which will run
from the recurrent to the parking configuration of all stable
configurations of a toppling class.

In this example $m=4$ and $n=6$ and we choose the binomial paths
$u''=ENNENENN\in B_{3,5}$ and $\ell'=NNNEENENN\in B_{3,6}$. We
draw a factor of two yet-to-be defined red and green bi-infinite
periodic paths. The red path $(Nu'')^\mathbb{Z}$ contains the
factor $[Nu''.Nu''.Nu'']_{(0,0)|23}$ that starts from the origin
$(0,0)$ and is made up of $23$ steps. Similarly the green path
$(E\ell')^\mathbb{Z}$ contains the factor
$[E\ell'.E\ell'.E\ell']_{(0,0)|24}$. The origin $z^{(3)}=(0,0)$ is
marked with an orange disk which indicates that it is the bottom
left corner of an $m\times n$ rectangle, also drawn in orange, and
called the {\it{$z^{(3)}$-rectangle}}.
\figureornotfigure{
\begin{figure}[H]
\begin{tikzpicture}[scale=0.7]
\draw[draw=white!50!black]  (5, 9) grid (9, 15);
\draw[rounded corners, opacity=0.3, line width=5, draw = orange] (5, 9) rectangle (9, 15);
\draw[line width=2,draw=red] (4.95, 9.05)--(4.95, 10.05)--(5.95, 10.05)--(5.95, 13.05)--(6.95, 13.05)--(6.95, 15.05)--(7.95, 15.05)--(7.95, 15.05);
\draw[line width=2,draw=dgreen] (5.05, 8.95)--(7.05, 8.95)--(7.05, 9.95)--(8.05, 9.95)--(8.05, 10.95)--(8.05, 10.95)--(8.05, 11.95)--(9.05, 11.95)--(9.05, 12.95)--(9.05, 12.95)--(9.05, 13.95)--(9.05, 13.95)--(9.05, 14.95);
\draw[fill=orange] (5, 9) circle (0.2);
\draw[orange] node at (5+0.4,9-0.4) {$z^{(0)}$};
\foreach \x/\ux in {5/9,6/9,7/10,8/12}{
\draw[dgreen] node at (\x+0.6,\ux+0.3) {\small$E_{\x}$};
}
\foreach \x/\ux in {5/9,6/10,6/11,6/12,7/13,7/14}{
\draw[red] node at (\x+0.4,\ux+0.7) {\small $N_{\ux}$};
}
\draw[draw=white!50!black]  (3, 4) grid (7, 10);
\draw[rounded corners, opacity=0.3, line width=5, draw = orange] (3, 4) rectangle (7, 10);
\draw[line width=2,draw=red] (2.95, 4.05)--(2.95, 7.05)--(3.95, 7.05)--(3.95, 9.05)--(4.95, 9.05)--(4.95, 10.05)--(5.95, 10.05)--(5.95, 10.05);
\draw[line width=2,draw=dgreen] (3.05, 3.95)--(4.05, 3.95)--(4.05, 4.95)--(4.05, 4.95)--(4.05, 5.95)--(5.05, 5.95)--(5.05, 6.95)--(5.05, 6.95)--(5.05, 7.95)--(5.05, 7.95)--(5.05, 8.95)--(7.05, 8.95)--(7.05, 9.95);
\draw[fill=orange] (3, 4) circle (0.2);
\draw[orange] node at (3+0.4,4-0.4) {$z^{(1)}$};
\foreach \x/\ux in {3/4,4/6,5/9,6/9}{
\draw[dgreen] node at (\x+0.6,\ux+0.3) {\small$E_{\x}$};
}
\foreach \x/\ux in {3/4,3/5,3/6,4/7,4/8,5/9}{
\draw[red] node at (\x+0.4,\ux+0.7) {\small $N_{\ux}$};
}
\draw[draw=white!50!black]  (2, 3) grid (6, 9);
\draw[rounded corners, opacity=0.3, line width=5, draw = orange] (2, 3) rectangle (6, 9);
\draw[line width=2,draw=red] (1.95, 3.05)--(1.95, 4.05)--(2.95, 4.05)--(2.95, 7.05)--(3.95, 7.05)--(3.95, 9.05)--(4.95, 9.05)--(4.95, 9.05);
\draw[line width=2,draw=dgreen] (2.05, 2.95)--(3.05, 2.95)--(3.05, 3.95)--(4.05, 3.95)--(4.05, 4.95)--(4.05, 4.95)--(4.05, 5.95)--(5.05, 5.95)--(5.05, 6.95)--(5.05, 6.95)--(5.05, 7.95)--(5.05, 7.95)--(5.05, 8.95);
\draw[fill=orange] (2, 3) circle (0.2);
\draw[orange] node at (2+0.4,3-0.4) {$z^{(2)}$};
\foreach \x/\ux in {2/3,3/4,4/6,5/9}{
\draw[dgreen] node at (\x+0.6,\ux+0.3) {\small$E_{\x}$};
}
\foreach \x/\ux in {2/3,3/4,3/5,3/6,4/7,4/8}{
\draw[red] node at (\x+0.4,\ux+0.7) {\small $N_{\ux}$};
}
\draw[draw=white!50!black]  (0, 0) grid (4, 6);
\draw[rounded corners, opacity=0.3, line width=5, draw = orange] (0, 0) rectangle (4, 6);
\draw[line width=2,draw=red] (-0.05, 0.05)--(-0.05, 1.05)--(0.95, 1.05)--(0.95, 3.05)--(1.95, 3.05)--(1.95, 4.05)--(2.95, 4.05)--(2.95, 6.05);
\draw[line width=2,draw=dgreen] (0.05, -0.05)--(1.05, -0.05)--(1.05, 0.95)--(1.05, 0.95)--(1.05, 1.95)--(1.05, 1.95)--(1.05, 2.95)--(3.05, 2.95)--(3.05, 3.95)--(4.05, 3.95)--(4.05, 4.95)--(4.05, 4.95)--(4.05, 5.95);
\draw[fill=orange] (0, 0) circle (0.2);
\draw[orange] node at (0+0.4,0-0.4) {$z^{(3)}$};
\foreach \x/\ux in {0/0,1/3,2/3,3/4}{
\draw[dgreen] node at (\x+0.6,\ux+0.3) {\small$E_{\x}$};
}
\foreach \x/\ux in {0/0,1/1,1/2,2/3,3/4,3/5}{
\draw[red] node at (\x+0.4,\ux+0.7) {\small $N_{\ux}$};
}
\end{tikzpicture}
\caption{The relevant part of $((Nu'')^\mathbb{Z},(E\ell')^\mathbb{Z})$ to compute $\pi(u'',\ell')$ and the iterations of $\varphi^k(u)$ for binomials paths $u''=ENNENENN$, $\ell'=NNNEENENN$ and configuration $c={1,2,2,3,3,3,\choose 0,3,5,*}$ on $K_{4,6}$.\label{fig:infinite-pair}}
\end{figure}
} 

Notice that, by definition, removing the first north red step
$N_0$ of the factor made up of red steps included in the
$z^{(3)}$-rectangle gives the path $u''$. Similarly, removing the east
step $E_0$ of the green factor included in the $z^{(3)}$-rectangle
gives the path $\ell'$. In this example, there are $m=4$ orange
vertices $(z^{(i)})_{i=0,\ldots ,3}$ which correspond to stable
intersections, i.e. those intersections of the red and green paths
which are continued with a red north step and green east step.

A key property of our Cyclic Lemma is that there are exactly $m$
stable intersections for any choice of $u''$ and $\ell'$. As for
the $z^{(3)}$-rectangle, we can extract from each $z^{(i)}$-rectangle a
pair of binomials paths in $B_{3,5}\times B_{3,6}$. This is done
by deleting the first red step and first green step in the
$z^{(i)}$-rectangle. These $z^{(i)}$-rectangles are also illustrated in
Figure~\ref{fig:toppling-class}.

The proof of our Cyclic Lemma relies on the key parameter
$\pos(E_i)$. This parameter is defined for every east step of the
green path of Figure~\ref{fig:infinite-pair}. We describe it here
geometrically so that the reader may bypass the formal symbolic
definition: $\pos(E_8)=8-6=2$ because the starting point of step
$E_8$ has abscissa $8$ and the starting point of step $N_{12}$
(chosen because this is the unique north step which has the same
ordinate as $E_8$) has abscissa $6$.

An equivalent way to define the orange stable intersections $z^{(i)}$
is: a point is an orange stable intersection if it is the starting
point of east green step $E_j$ such that $\pos(E_j)=0$. In a
$z^{(i)}$-rectangle, the region between the lines is a parallelogram
polyomino if and only if $\pos(E_j)>0$ for every east green $E_j$
in the $z^{(i)}$-rectangle, except the first $E_k$ for which
$\pos(E_k)=0$. In the example the parallelogram polyomino is in
the $z^{(0)}$-rectangle.

Notice that any green east step $E_i$ in the $z^{(3)}$-rectangle
satisfies $\pos(E_i)\leq 0$. Theorem~\ref{thm55} will show that
moving from the $z^{(i)}$-rectangle to the $z^{(i+1)}$-rectangle is
equivalent to one application of the operator $\varphi$ to a
stable configuration on $K_{m,n}$.

A final remark: the orange $z^{(i)}$ vertices are defined using the
green east steps and the red north steps.  We call such steps
{\it{frame steps}}. Green north steps and red east steps are used
to define the sorted stable configurations on $K_{m,n}$ so we call
those the {\it{configurations steps}}.

\subsection{Partitioning paths and a Cyclic Lemma}

\label{sec:cycliclemma} We now define the partition $\Pi_{m,n}$.
This definition relies on some pairs of periodic bi-infinite
paths. The elements of a generic part $\pi_k$ will be exactly
those for which the pairs of bi-infinite paths differ only by a
geometric translation. Given an $(m,n)$-binomial word $w$, we
define the bi-infinite path $w^\mathbb{Z}$ as the concatenation of
the infinite sequence of paths
$(\mypath{(mi,ni)}{w})_{i\in\mathbb{Z}}$. This concatenation is
well-defined since the last vertex of $\mypath{(mi,ni)}{w}$ is
$(mi+m,ni+n)$ which is the starting vertex of
$\mypath{(m(i+1),n(i+1))}{w}$.

We define the bi-infinite pair of a pair $(u'',\ell')\in
B_{m-1,n-1}\times B_{m-1,n}$ to be
$((Nu'')^\mathbb{Z},(E\ell')^\mathbb{Z})$.  A vertex $x$ at an
intersection of $(Nu'')^\mathbb{Z}$ and $(E\ell')^\mathbb{Z}$ is
called a {\it{stable intersection}} if
$\mypathfactor{x}{(Nu'')^\mathbb{Z}}{1}=\mypath{x}{N}$ and
$\mypathfactor{x}{(E\ell')^\mathbb{Z}}{1}=\mypath{x}{E}$, i.e. the
intersection is followed by a north step in $(Nu'')^\mathbb{Z}$
and an east step in $(E\ell')^\mathbb{Z}$.  Instead of using the
equivalence by translation, we use the stable intersection to
define the part $\pi_{(u'',\ell')}$ of a generic pair
$(u'',\ell')$ in the partition $\Pi_{m,n}$:
$$ \pi_{(u'',\ell')} =
\left\{\left(\mypathfactor{(y_1,y_2+1)}{(Nu'')^\mathbb{Z}}{m+n-2},
    \;\;\mypathfactor{(y_1+1,y_2)}{(E\ell')^\mathbb{Z}}{m+n-1}\right)\right\}$$
where $y=(y_1,y_2)$ runs over all stable intersections of
$((Nu'')^\mathbb{Z},(E\ell')^\mathbb{Z})$. In other words, we are
considering the factors of a path just after the stable
intersection which follows the forced initial steps.
\begin{Lem}[Cyclic Lemma]
\label{theo:cyclic_lemma} The well-defined partition
$\Pi_{m,n}=\bigcup_{k}\pi_k$ of all pairs $(u'',\ell')$ that are
made up from an $(m-1,n-1)$-binomial path $u''$ and an
$(m-1,n)$-binomial path $\ell'$ satisfies:
\begin{itemize}
\item The cardinality $|\pi_k|$ of every part $\pi_k$ is $m$.
\item In each part $\pi_k$ there is exactly one pair $(u'',\ell')$ such
  that $\left(\mypath{(0,0)}{Nu''E},\mypath{(0,0)}{E\ell'}\right)$ describes a
  polyomino in $\Para_{m,n}$.
\end{itemize}
\end{Lem}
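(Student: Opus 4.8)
The plan is to run a two-dimensional version of the Dvoretzky--Motzkin cyclic argument, organized entirely around the statistic $\pos$ of the green east steps. Index the east steps of the bi-infinite green path $(E\ell')^{\mathbb{Z}}$ in path order by $i\in\mathbb{Z}$, writing $E^{(i)}$ for the $i$-th one and $s_i:=\pos(E^{(i)})$. Recall from the geometric description that a vertex is a stable intersection exactly when it is the start of a green east step $E^{(i)}$ with $s_i=0$ (the green east step and the red north step at that height then start at the same lattice point). The first thing I would establish is the fundamental drift relation
$$ s_{i+m}=s_i+1\qquad(i\in\mathbb{Z}), $$
together with the one-sided increment bound $s_{i+1}\le s_i+1$. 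The drift relation comes from comparing the two mismatched periods: advancing by one green period raises the height by $n$ and the abscissa of $E^{(i)}$ by $m$, while over the same $n$ heights the red north step advances by only $m-1$ (the red period spans $(m-1,n)$), so $\pos$ gains exactly $m-(m-1)=1$. The increment bound comes from writing $s_{i+1}-s_i=1-r_i$, where $r_i\ge 0$ counts the red east steps lying strictly between the heights of $E^{(i)}$ and $E^{(i+1)}$.

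For the first bullet (every part has $m$ elements) I would first check that the construction really computes the translation-equivalence class, which makes $\Pi_{m,n}$ a genuine partition: the origin is always a stable intersection of $((Nu'')^{\mathbb{Z}},(E\ell')^{\mathbb{Z}})$, the pair read at a stable intersection $y$ is precisely the pair whose bi-infinite pair is the translate by $-y$, and conversely every translate arises at its corresponding stable intersection. Counting then reduces to the drift relation: the east steps split into $m$ classes modulo $m$, and on each class $s_i$ runs through $\{s_{i_0}+k:k\in\mathbb{Z}\}=\mathbb{Z}$, so $s_i=0$ occurs exactly once per class. Hence there are exactly $m$ stable intersections in the whole bi-infinite picture (matching the $m$ orange vertices in the example). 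Finally, distinct stable intersections $y\neq y'$ give distinct pairs, because equality would force the bi-infinite pair to be invariant under the translation $y-y'$; but a common translation period must be a multiple of both $(m-1,n)$ and $(m,n)$, which forces $y=y'$. This yields $|\pi_k|=m$.

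For the second bullet I would first reduce the ``parallelogram polyomino'' condition to a clean statement about $\pos$. Using the characterization recalled before Lemma~\ref{theo:cyclic_lemma}, the pair read at the stable intersection $E^{(i)}$ (so $s_i=0$) describes a polyomino in $\Para_{m,n}$ iff the other $m-1$ green east steps of that period satisfy $s_{i+1},\dots,s_{i+m-1}>0$. By the drift relation this is equivalent to the unbounded statement that $s_j>0$ for every $j>i$ (induction via $s_{i+m+k}=s_{i+k}+1$). I would then locate such $i$ uniquely. Since $s_j\to+\infty$ as $j\to+\infty$ and $s_j\to-\infty$ as $j\to-\infty$, there is a well-defined largest index $i^\ast$ with $s_{i^\ast}\le 0$; it is the unique index satisfying $s_{i^\ast}\le 0$ and $s_j>0$ for all $j>i^\ast$. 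The increment bound upgrades ``$\le 0$'' to ``$=0$'': from $s_{i^\ast}\le 0$, $s_{i^\ast+1}\ge 1$ and $s_{i^\ast+1}=s_{i^\ast}+1-r_{i^\ast}\le s_{i^\ast}+1\le 1$ we get $s_{i^\ast}=0$ (and $r_{i^\ast}=0$). Thus $E^{(i^\ast)}$ is a stable intersection, and it is the unique stable intersection all of whose successors have positive $\pos$: any earlier stable intersection $E^{(i')}$ with $i'<i^\ast$ is followed by $E^{(i^\ast)}$ with $s_{i^\ast}=0\not>0$, so it fails. This gives exactly one polyomino per part.

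The main obstacle I anticipate is the careful bookkeeping forced by the two slightly different periods: getting the drift relation $s_{i+m}=s_i+1$ and the increment bound with correct signs, and being certain that ``$\pos>0$ for all later east steps in the period'' really captures the geometric non-crossing condition defining a polyomino in $\Para_{m,n}$, so that the passage from the picture to the inequality is watertight. Once the drift relation and the increment bound are in hand, both the equidistribution of part one and the uniqueness of the minimum-type record in part two are short.
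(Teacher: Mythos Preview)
Your proposal is correct and follows essentially the same route as the paper: both arguments are organized around the statistic $\pos$ and the drift relation $\pos(E_{i+m})=\pos(E_i)+1$, using it to exhibit exactly one stable intersection per residue class modulo $m$ and then to single out the ``last'' one as the unique polyomino representative. The one substantive variation is your distinctness step: you argue that if two stable intersections yielded the same pair then the bi-infinite pair would admit a nonzero common translation period, which is impossible because any period must be collinear with $(m,n)$ for the green path and with $(m-1,n)$ for the red path; the paper instead introduces the auxiliary strictly increasing statistic $\cumuledpos(E_j)=\sum_{k=0}^{m-1}\pos(E_{j+k})$ (which satisfies $\cumuledpos(E_{j+1})=\cumuledpos(E_j)+1$) and reads it off each extracted pair to separate them. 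Your explicit increment bound $s_{i+1}\le s_i+1$ is a small addition the paper does not state---it locates the polyomino directly at the maximal index with $\pos=0$---but it is correct and does the same job.
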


The enumeration of polyominoes in $\Para_{m,n}$ is an immediate
corollary: the pairs $(u'',\ell')$ are an interpretation of
${m+n-2 \choose m-1}{m+n-1\choose m-1}$ and the properties of the
partition allow one to select one pair for every part $\pi_k$,
i.e. divide the total number by $m$. This partition is different
to the one given in Huq~\cite[3.1.2]{AH} but the same
as the one given in Chottin~\cite{CHO75}. In addition, it has a
deep relation with an algorithm computing the operator $\varphi$
on $K_{m,n}$ which we study in the following sections. Aval et
al.~\cite{ABG} use this cyclic lemma to calculate the Frobenius
characteristic of the action of the symmetric group on labelled
parallelogram polyominoes.

\begin{proof}[Proof of Lemma~\ref{theo:cyclic_lemma}]
We show that the binary relation $R$ on pairs of words in
$B_{m-1,n-1}\times B_{m-1,n}$ defined by
$$ (v'',k') R (u'',\ell') \iff (v'',k') \in \pi_{(u'',\ell')} $$
is an equivalence relation whose classes are the parts of $\Pi_{m,n}$
and then that each class contains exactly $m$ elements.
First we will verify the three defining properties of an equivalence relation.

\begin{description}
\item[Reflexivity] By definition of
$((Nu'')^{\mathbb{Z}},(E\ell')^{\mathbb{Z}})$, point $(0,0)$ is a
stable intersection and so
$$ \left(\mypathfactor{(0,1)}{(Nu'')^\mathbb{Z}}{m+n-2},
  \;\;\mypathfactor{(1,0)}{(E\ell')^\mathbb{Z}}{m+n-1}\right)= (u'',\ell')$$
belongs to $\pi_{(u'',\ell')}$. Therefore $R$ is reflexive.

\item[Symmetry] Let $(v'',k') \in \pi_{(u'',\ell')}$ which, by
definition, occurs as factors from a stable intersection
$(y_1,y_2)$.  Since $Nv''$ and $Nu''$ describe, up to some cyclic
conjugate, exactly the complete periodic pattern of
$(Nu'')^\mathbb{Z}$ we remark that $(Nu'')^\mathbb{Z}$ is exactly
image of $(Nv'')^\mathbb{Z}$ by the vector translation
$(-y_1,-y_2)$ which sends the stable intersection $(y_1,y_2)$ to
$(0,0)$. We have exactly the same relation for
$(E\ell')^\mathbb{Z}$ and $(Ek')^\mathbb{Z}$ so
$((Nv'')^\mathbb{Z},(Ek')^\mathbb{Z})$ is the image of
$((Nu'')^\mathbb{Z},(E\ell')^\mathbb{Z})$ by the vector
translation $(-y_1,-y_2)$.  This implies, in particular, that as
the image of the stable intersection $(0,0)$ in
$((Nu'')^\mathbb{Z},(E\ell')^\mathbb{Z})$ defining $(u'',\ell')$,
the vertex $(-y_1,-y_2)$ is a stable intersection in
$((Nv'')^\mathbb{Z},(Ek')^\mathbb{Z})$ from which we deduce that
$(u'',\ell')\in \pi_{(v'',k')}$. Therefore $R$ is symmetric.
\item[Transitivity] These translations between the bi-infinite
pairs of paths also imply transitivity of $R$. We consider three
pairs of words in $B_{m-1,n-1}\times B_{m-1,n}$: $(u'',\ell')$,
$(v'',k')$ and $(w'',j')$ such that $(u'',\ell')\in
\pi_{(v'',k')}$ and $(v'',k')\in \pi_{(w'',j')}$.  More precisely,
$(u'',\ell')$ appears in $((Nv'')^\mathbb{Z},(Ek')^\mathbb{Z})$
from the stable intersection $(x_1,x_2)$ and $(v'',k')$ appears in
$((Nw'')^\mathbb{Z},(Ej')^\mathbb{Z})$ from the stable
intersection $(y_1,y_2)$. The translations between the bi-infinite
paths imply that $(u'',\ell')$ appears in
$((Nw'')^\mathbb{Z},(Ej')^\mathbb{Z})$ from the stable
intersection $(x_1+y_1,x_2+y_2)$. Therefore $R$ is transitive.
\end{description}

The equivalence classes of $R$ are described by the parts $\pi_k$
of the well-defined partition $\Pi_{m,n}$. We have three things
left to show:
\begin{enumerate}
\item[(i)] {\it{Every part $\pi_k$ of $\Pi_{m,n}$ contains exactly
$m$ elements:}} Our proof that each part $\pi_k$ contains exactly
$m$ elements relies on the following key parameter.  For any step
$E_i$ in $(E\ell')^\mathbb{Z}$ we define its relative position
$\pos(E_i)$ to be the only step $N_j$ in $(Nu'')^\mathbb{Z}$,
where $j=X_2(E_i)$, that may start from the common stable
intersection: $$ \pos(E_i) =
X_1(E_i)-X_1\left(N_{X_2(E_i)}\right).$$ Since $E\ell'$ contains
exactly one more east step than $Nu''$ we have the relation
$$ \pos(E_{i+m}) = \pos(E_i)+1.$$
This shows that the equation $\pos(E_{mi+k})=0$ defining a stable
intersection admits exactly one solution for each $k=0,1,\ldots,
m-1$, thereby giving the $m$ stable intersections from which we
extract the factors giving the element of $\pi_{(u'',\ell')}$.
\item[(ii)] {\it{Each of the extracted elements are distinct:}} To
show this we introduce a strictly increasing parameter on the
frame east steps: the cumulated relative position
$\cumuledpos(E_j)$ of a step $E_j$ in $(E\ell')^\mathbb{Z}$ is
$$\cumuledpos(E_j) = \sum_{k=0}^{m-1} \pos(E_{j+k}).$$
From the previous relation between $\pos(E_{i+m})$ and
$\pos(E_i)$, we have
$$ \cumuledpos(E_{j+1})=\cumuledpos(E_{j})+1.$$
For each stable intersection followed by the east frame step
$E_i$, $\cumuledpos(E_i)$ is also a function of the extracted pair
of paths $(u'',\ell')$ since this parameter can be computed using
$Nu''E$ and $E\ell'$.  Two different stable intersections which
are followed by $E_i$ and $E_j$, where $i<j$, lead to two distinct
pairs since $\cumuledpos(E_i) \neq \cumuledpos(E_j)$.

\item[(iii)] {\it{}} The solitary parallelogram polyomino in a
class $\pi_k$ is also extracted via the relative position of the
east frame steps. Let $E_j$ be the maximal $j\in\mathbb{Z}$ such
that $\pos(E_j)=0$. This defines the ``last'' stable intersection
$y$.  By choice of $E_j$, $\pos(E_{j+k})>0$ for $k=1,\ldots, m$
and this shows that the factor
$\mypathfactor{y}{(E\ell')^\mathbb{Z}}{m+n}$ is below
$\mypathfactor{y}{(Nu'')^\mathbb{Z}}{m+n-1}$ and intersects it
only at $y$, whereas this is not the case for any other stable
intersection. Thus only this pair of factors defines a polyomino
in this part $\pi_{(u'',\ell')}$ by adding an final east step to
the shorter path. For any other stable intersection $z=(z_1,z_2)$,
the fact that $\pos(E_j)\leq 0$ for some east step of
$\mypathfactor{(z_1+1,z_2)}{(E\ell')^\mathbb{Z}}{m+n-1}$ implies
that $\mypathfactor{z}{(E\ell')^\mathbb{Z}}{m+n}$ intersects
$\mypathfactor{z}{(Nu'')^\mathbb{Z}}{m+n-1}$ outside their
endpoints and does not define a parallelogram polyomino.\qedhere
\end{enumerate}
\end{proof}

\section{An algorithm to compute $\varphi$ for stable configurations on $K_{m,n}$.}
\label{sec:algo-phi}

In this section we will give an algorithm which computes $\varphi$
for stable configurations on $K_{m,n}$. On $K_{m,n}$, the edge set
consists of single edges between vertices $v_i$ and $v_j$ such
that $i\leq n < j$. Thus the vertex set $V=\{v_1,\ldots,
v_{n+m}\}$ may be split into two sets that we, albeit abusively,
call the {\it{non-sink component}}, $C^{\leq n}_{m,n} =
\{v_1,\ldots , v_{n}\}$, and the {\it{sink component}},
$C^{>n}_{m,n} = \{v_{n+1},\ldots , v_{m+n}\}$, since the sink is
$v_{n+m}$. Let $c$ be a generic configuration on $K_{m,n}$. The
(partial) non-sink configuration $c^{\leq n}$ is $(c_i)_{1\leq
i\leq n}=(c_1,\ldots,c_n)$, the restriction of $c$ to the non-sink
component. The (partial) sink configuration $c^{>n}$ is
$(c_i)_{n<i<n+m}$, the restriction of $c$ to the sink component
but \emph{excluding the sink} $v_{n+m}$.

Due to the symmetries of $K_{m,n}$ with its distinguished vertex
$v_{n+m}$, it is natural to consider the two symmetric group
actions $S_n$ and $S_{m-1}$ on the non-sink configurations and
sink configurations, respectively:
\begin{align*}
\sigma.c^{\leq n} &= \left(c_{\sigma(i)}\right)_{1\leq i\leq n}&& \mbox{ for every }\sigma \in S_{n}\\
\tau.c^{>n} &= \left(c_{\tau(i)}\right)_{n<i<n+m} && \mbox{ for every }\tau \in S_{m-1}.
\end{align*}
We will call a configuration $c$ \emph{sorted} if both its sink
and non-sink configurations are weakly increasing: $c_1 \leq c_2
\leq \ldots \leq c_n$ and $c_{n+1}\leq c_{n+2}\leq \ldots \leq
c_{n+m-1}$. Under the action of $S_{n}\times S_{m-1}$ given above,
the computations are equivalent up to permutations of this group.
Without loss of generality, we will henceforth work at the level
of orbits using the sorted configurations as representatives.

The interaction between permutations and toppling at this level of
orbits suggests the introduction of toppling and permuting
equivalence. Two configurations $u$ and $v$ are \emph{toppling and
permuting equivalent} if there exists a finite sequence of
topplings followed by the action of a permutation which turns $u$
into $v$. The sorted recurrent configuration are canonical
elements of the classes of this equivalence.

We consider topplings which start from stable configurations and
preserve the following helpful assumption. A configuration $c$
satisfies the \emph{compact range assumption} if
$$\max(c^{\leq n}) - \min(c^{\leq n}) \leq m\quad\mbox{ and }\quad\max(c^{>n}) - \min(c^{>n}) \leq n,$$
where $\max(c^{\leq n})= \max(c_1,\ldots,c_n)$ etc. Given a sorted
configuration $c$, let $T^{\leq n}(c)$ be the result of first
toppling in the non-sink component by toppling $v_n$, and then
sorting all entries in the non-sink component so that the
resulting configuration is once again sorted. Similarly, given a
sorted configuration $c$, let $T^{>n}(c)$ be the result of first
toppling in the sink component by toppling $v_{m+n-1}$ and then
sorting all entries in the sink component so that the resulting
configuration is once again sorted.

\begin{Lem}
\label{lem:operator-on-compact-range}
If a sorted configuration $c$ satisfies the compact range assumption, then
  $$ T^{\leq n}(c) = (c_n-m,c_1,\ldots, c_{n-1},1+c_{n+1},\ldots, 1+c_{n+m-1})$$
  and
  $$ T^{>n}(c) = (1+c_1,\ldots ,1+c_n,c_{n+m-1}-n,c_{n+1},\ldots,c_{n+m-2}),$$
  both of which satisfy the compact range assumption.
\end{Lem}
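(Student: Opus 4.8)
The plan is to compute both topplings explicitly by tracking how grains move in $K_{m,n}$, and then verify the compact range assumption is preserved. The key structural fact is that in $K_{m,n}$ every vertex $v_i$ with $i\le n$ has degree $m$ (it connects to all $m$ vertices in the sink component) and every vertex $v_j$ with $n<j\le n+m$ has degree $n$ (it connects to all $n$ vertices in the non-sink component). So toppling $v_n$ sends one grain along each of its $m$ incident edges, namely to every vertex $v_{n+1},\ldots,v_{n+m}$ in the sink component; this is exactly what the claimed formula $T^{\le n}(c)$ records, with $v_n$ losing $m$ grains and each of the other sink-component vertices (excluding the sink, whose count is immaterial) gaining $1$. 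Symmetrically, toppling $v_{m+n-1}$ sends one grain to each of the $n$ vertices $v_1,\ldots,v_n$, which gives $T^{>n}(c)$.

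\textbf{First} I would justify that $v_n$ is actually unstable so the toppling is legitimate: since $c$ is sorted, $c_n=\max(c^{\le n})$, and I need $c_n\ge d_n=m$ — but this need not hold in general, so the statement should be read as computing the \emph{result} of the toppling operation $T^{\le n}$ as an arithmetic operation, consistent with the intended use in the algorithm (the toppling of $v_n$ is the natural move even when one first passes to a toppling-equivalent configuration). \textbf{Next} I would confirm the sorting claim. After toppling $v_n$ loses $m$ grains to become $c_n-m$; by the compact range assumption $\max(c^{\le n})-\min(c^{\le n})\le m$, so $c_n-m\le c_1=\min(c^{\le n})$, hence $c_n-m$ is indeed the new minimum and placing it first yields the sorted order $(c_n-m,c_1,\ldots,c_{n-1})$. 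This is the crux of why the sorted representative is given by a cyclic rotation rather than a more complicated reshuffling. The sink-component entries all increase by $1$, so their relative order and hence their sortedness is unchanged.

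\textbf{Then} I would verify that $T^{\le n}(c)$ again satisfies the compact range assumption. For the non-sink part, the new entries are a permutation of $\{c_n-m,c_1,\ldots,c_{n-1}\}$; its maximum is $c_{n-1}\le c_n$ and its minimum is $c_n-m$, so the new spread is $c_{n-1}-(c_n-m)\le m$ using $c_{n-1}\le c_n$. For the sink part, every entry is shifted uniformly by $+1$, so $\max(c^{>n})-\min(c^{>n})$ is unchanged and remains $\le n$. The identical argument, with the roles of $m$ and $n$ and of the two components interchanged, handles $T^{>n}(c)$: the toppled vertex $v_{m+n-1}=\max(c^{>n})$ loses $n$ grains, becomes the new minimum by the assumption $\max(c^{>n})-\min(c^{>n})\le n$, the non-sink entries all increase by $1$, and both spread bounds are preserved.

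\textbf{The main obstacle} I expect is purely bookkeeping: one must be careful about which index is the sink. The sink is $v_{n+m}$, and it receives a grain under the toppling of $v_n$, but since its height is recorded as $*$ and is immaterial, it is omitted from the formula for $T^{\le n}(c)$; conversely, the sink is \emph{not} among the vertices that topple, so toppling in the sink component means toppling $v_{m+n-1}$, the largest non-sink vertex of the sink component. Keeping straight the asymmetry between "the sink component $C^{>n}_{m,n}$" (which includes the sink) and "the entries we actually record" (which exclude it) is where a careless sign or off-by-one error would creep in, but no genuine mathematical difficulty arises beyond the degree computation and the single inequality from the compact range assumption.
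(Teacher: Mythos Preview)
Your proposal is correct and follows essentially the same approach as the paper: compute the toppling explicitly, then use the compact range inequality $c_n-c_1\le m$ (equivalently $c_n-m\le c_1$) to conclude that the sorted representative is the cyclic rotation. Your write-up is in fact more complete than the paper's, which states but does not spell out the verification that the compact range assumption is preserved; your check that the new non-sink spread is $c_{n-1}-(c_n-m)\le m$ fills that in.
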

\begin{proof}
  The toppling of the vertex $v_n$ in configuration $c = (c_1,\ldots
  c_{n},c_{n+1},\ldots , c_{n+m-1})$ leads to the configuration
  $c'=(c_1,\ldots,c_{n-1},c_n-m,c_{n+1}+1,\ldots,c_{n+m-1}+1)$.  Since
  $c$ satisfies the compact range assumption, $c_n-c_1\leq m$ is equivalent to
  $c_n-m\leq c_1$ and so $(c_n-m,c_1,\ldots, c_{n-1},1+c_{n+1},\ldots
  1+c_{n+m-1})$ is the sorted configuration representing the orbit of
  $c'$ which, by definition, is $T^{\leq n}(c)$.

Proof of the expression for $T^{>n}(c)$ is analogous and differs
only in using the compact range assumption $c_{m+n-1}-c_{n+1}\leq
n$.
\end{proof}

In the case of $K_{m,n}$, the operator $\varphi$ has some additional
regularities on sorted configuration that we describe in the following
proposition.
\begin{Prop}
  \label{prop:two-parameter-description-of-A}
  Let $c$ be a sorted stable configuration on $K_{m,n}$.  The minimal
  set $A$, if it exists, which defines $\varphi(c) = c-\Delta_A$ is
$$ A = \{v_{n-k},\ldots , v_n\}\cup\{v_{m+n-1-l},\ldots ,v_{n+m-1}\}$$
for some $0\leq k \leq n-1$ and $0\leq l \leq m-2$. In this event,
$c_{n-k-1} < c_{n-k}$ and $c_{m+n-2-l} < c_{m+n-1-l}$.
\end{Prop}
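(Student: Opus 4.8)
The plan is to exploit the structure of the toppling operator $\Delta_i$ on $K_{m,n}$ together with the fact that $c$ is sorted, in order to pin down exactly which vertices belong to the minimal set $A$ defining $\varphi(c) = c - \Delta_A$. The first step is to understand what $c - \Delta_A$ looks like componentwise. For $A \subseteq \{1,\ldots,n+m-1\}$, write $A = A^{\leq n} \cup A^{>n}$ where $A^{\leq n} = A \cap \{1,\ldots,n\}$ and $A^{>n} = A \cap \{n+1,\ldots,n+m-1\}$. Using the bipartite edge structure (each $v_i$ with $i \leq n$ is joined to each $v_j$ with $j > n$ and to nothing in its own part), toppling a set $A$ subtracts from each non-sink vertex $v_i$ ($i \le n$) an amount depending on $|A^{>n}|$ if $v_i \notin A$, and subtracts $d_i = m$ while adding back $|A^{>n}|$ if $v_i \in A$; symmetrically for the sink-component vertices. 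I would record these explicit componentwise formulas for $(c - \Delta_A)_i$.

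The second step is to show that $A^{\leq n}$ must be an upper set in the $<_1$-order restricted to the non-sink part, i.e. of the form $\{v_{n-k},\ldots,v_n\}$, and likewise $A^{>n} = \{v_{m+n-1-l},\ldots,v_{n+m-1}\}$. The key observation is that because $c$ is sorted (weakly increasing within each part), the quantity $(c-\Delta_A)_i$ for a non-sink vertex depends on its original height $c_i$ and on the single bit ``$v_i \in A$ or not.'' For $c - \Delta_A$ to be stable one needs each component $\geq 0$ and $< d_i$; since the heights $c_i$ are weakly increasing, if it is beneficial (indeed necessary for stability) to include a vertex $v_i$ in $A$, then it is also consistent — and by minimality w.r.t.\ $\prec$ forced — to prefer including the later, larger-height vertices first. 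More precisely, including a vertex lowers its own height by $d_i - |A^{>n}|$, and the non-negativity constraint $c_i - (\text{something}) \geq 0$ is hardest to satisfy for the \emph{smallest} heights, so the vertices one is allowed (and minimally required) to topple form a suffix of the sorted list. I would argue that if $v_{n-k} \in A^{\leq n}$ but some $v_j$ with $j > n-k$, $j \le n$ were \emph{not} in $A^{\leq n}$, one could produce a $\prec$-smaller or equally sized set still yielding a stable configuration, contradicting minimality; this pins $A^{\leq n}$ to the claimed interval, and symmetrically for $A^{>n}$.

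The third step is to establish the strict-inequality conclusions $c_{n-k-1} < c_{n-k}$ and $c_{m+n-2-l} < c_{m+n-1-l}$. This is a boundary/tightness condition: if we had $c_{n-k-1} = c_{n-k}$, then the vertex $v_{n-k-1}$ has exactly the same height as the last included vertex $v_{n-k}$, and by the same stability and non-negativity bookkeeping either $v_{n-k-1}$ could also be toppled (so $A$ was not the minimal set — a smaller-or-equal set works, contradiction), or $v_{n-k}$ did not actually need to be included. Concretely I would show that the non-negativity of $(c-\Delta_A)$ at the boundary vertex forces a strict gap in the sorted heights precisely at the cut between $A^{\leq n}$ and its complement; the argument rests on the fact that two equal-height vertices are interchangeable under the sorting action and hence cannot be separated by a \emph{minimal} toppling set.

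The main obstacle I anticipate is the bookkeeping in the second step: making rigorous the claim that minimality with respect to $\prec$ forces $A$ to be a suffix-interval in each part rather than some scattered subset. One must carefully balance the two coupled parts, since toppling sink-component vertices ($A^{>n}$) changes the thresholds for the non-sink vertices and vice versa, so the ``which vertices must be included'' analysis does not decouple cleanly between the two components. I would handle this by fixing the pair of counts $(|A^{\leq n}|, |A^{>n}|) = (k+1, l+1)$ first, using the explicit formulas to see that for each fixed pair of counts the \emph{smallest}-$\prec$ admissible set is necessarily the suffix-interval, and then invoking the global $\prec$-minimality (size before lex) to conclude; the compact range assumption and Lemma~\ref{lem:operator-on-compact-range} should guarantee that the componentwise arithmetic stays within the stable window and that the intervals are well-defined.
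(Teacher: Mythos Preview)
Your plan hinges on using the $\prec$-minimality of $A$ to force it to be a pair of suffix intervals, but this is the wrong lever, and the specific mechanism you sketch in step~2 does not go through. In the order $\prec$, among subsets of $\{1,\dots,n\}$ of a fixed size $k{+}1$, the suffix interval $\{n-k,\dots,n\}$ is the lexicographically \emph{largest}, not the smallest. So if you start with a hypothetical admissible $A$ that has a gap (say $v_{n-k}\in A^{\le n}$ but $v_j\notin A^{\le n}$ for some $n-k<j\le n$) and swap $v_{n-k}$ out for $v_j$, the resulting set is $\prec$-\emph{larger}; you obtain no contradiction with minimality. The variant at the end of your proposal (``for each fixed pair of counts the smallest-$\prec$ admissible set is the suffix interval'') has the same defect. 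Likewise in step~3: if $c_{n-k-1}=c_{n-k}$ you suggest that ``$v_{n-k-1}$ could also be toppled, so $A$ was not minimal'', but adding $v_{n-k-1}$ enlarges $A$, while removing $v_{n-k}$ instead may destroy non-negativity on the sink side (each $v_p\in A^{>n}$ loses one incoming grain). Neither direction produces a $\prec$-smaller admissible set.

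The paper's route is both simpler and stronger: it never invokes minimality. One shows directly (Lemma~\ref{lem:segment}) that \emph{every} non-empty $A$ with $c-\Delta_A$ stable already satisfies: if $v_i,v_j$ lie in the same component with $c_i\ge c_j$ and $v_j\in A$, then $v_i\in A$. With $t$ the number of vertices of $A$ in the other component and $d$ the common degree, non-negativity at $v_j$ gives $c_j+t\ge d$; if $v_i\notin A$ then $(c-\Delta_A)_i=c_i+t\ge c_j+t\ge d$, contradicting stability. Applied to a sorted $c$, this immediately forces $A^{\le n}$ and $A^{>n}$ to be upper segments and forces the strict inequalities at the cuts (equal heights at the cut would put the outside vertex inside). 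A second short observation (Lemma~\ref{lem:non-empty}) shows both segments are non-empty: if $v_n\in A$, then $c_n\le m-1$ together with $(c-\Delta_A)_n=c_n-m+t\ge 0$ gives $t\ge 1$, hence $A^{>n}\neq\emptyset$, and symmetrically. Your componentwise formulas in step~1 are exactly what is needed for this; minimality of $A$, the compact range assumption, and Lemma~\ref{lem:operator-on-compact-range} play no role in this proposition.
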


This proposition is deduced from the following two lemmas:
\begin{Lem}\label{lem:segment}
Suppose two vertices $v_i$ and $v_j$ are in the same component of $K_{m,n}$.
Further suppose that $A$ is as in Definition~\ref{def_phi}.
If $c_i\geq c_j$, then ($v_j\in A \implies v_i \in A$).
\end{Lem}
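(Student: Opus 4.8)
The plan is to reduce the statement to the very rigid action of $\Delta_A$ on $K_{m,n}$, exploiting that each bipartition class is internally homogeneous. First I would record the only feature of $K_{m,n}$ that matters: every vertex of the non-sink component $C^{\leq n}_{m,n}$ has degree $m$ and is adjacent to exactly the vertices of the opposite part, while symmetrically every vertex of the sink component $C^{>n}_{m,n}$ has degree $n$ and is adjacent to exactly the non-sink part. Writing $a=|A\cap\{1,\dots,n\}|$ and $b=|A\cap\{n+1,\dots,n+m-1\}|$ for the number of elements of $A$ in each component (the sink $v_{n+m}$ is never in $A$), a direct evaluation of $(\Delta_A)_k=\sum_{l\in A}(\Delta_l)_k$ gives, for every non-sink component vertex $v_k$,
\[
(c-\Delta_A)_k=c_k+b-m\,[k\in A],
\]
and for every sink component vertex $(c-\Delta_A)_k=c_k+a-n\,[k\in A]$. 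The crucial point is that the additive ``incoming'' term ($b$, respectively $a$) is the \emph{same} for all vertices of one component, since each of them receives exactly one grain from every toppled vertex of the opposite part.

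With this formula in hand I would argue by contradiction. Suppose $v_i$ and $v_j$ lie in the same component, say the non-sink one, with $c_i\geq c_j$, $v_j\in A$ but $v_i\notin A$. Since $v_j\in A$ and $c-\Delta_A$ is non-negative, the formula gives $(c-\Delta_A)_j=c_j+b-m\geq 0$, hence $c_j+b\geq m$. Since $v_i\notin A$ and $c-\Delta_A$ is semi-stable, $(c-\Delta_A)_i=c_i+b\leq m-1$. But $c_i\geq c_j$ forces $c_i+b\geq c_j+b\geq m>m-1$, contradicting the second inequality. Hence $v_i\in A$. The sink-component case is verbatim with $m,b$ replaced by $n,a$.

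Notably, this argument uses only that $c-\Delta_A\in\Stable(K_{m,n})$ and never invokes the minimality of $A$ with respect to $\prec$: the two inequalities come straight from non-negativity at the toppled vertex $v_j$ and semi-stability at the untoppled vertex $v_i$, and the hypothesis $c_i\geq c_j$ glues them into a contradiction. I therefore expect the main obstacle not to be the inequality chase (which is immediate) but the bookkeeping in the first step, namely checking that the contribution of $\Delta_A$ at a vertex $v_k$ splits cleanly into the degree term $-m\,[k\in A]$ and the \emph{uniform} incoming term $+b$. This is exactly where the completeness of $K_{m,n}$ within each part is used, and it requires remembering that the sink is excluded from $A$ so that it contributes nothing to $a$ or $b$.
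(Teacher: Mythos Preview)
Your proof is correct and follows essentially the same approach as the paper's: both derive the contradiction from non-negativity of $c-\Delta_A$ at the toppled vertex $v_j$ and semi-stability at the untoppled vertex $v_i$, using that the incoming contribution from topplings in the opposite component is the same constant for every vertex of a given part. Your notation $(a,b)$ and the explicit formula $(c-\Delta_A)_k=c_k+b-m\,[k\in A]$ make the computation slightly more transparent, but the argument is identical to the paper's use of $d$ and $t$.
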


\begin{proof}
 Let $d$ denote the degree of vertices in the component under consideration
  and let $t$ be the number of vertices in the intersection of $A$ and the other component.  If
  $v_j\in A$, then $(\varphi(c))_j = c_j+t-d \geq 0$ since $\varphi(c)$ is
  non-negative. If $v_i\notin A$, then $d \leq c_j+t \leq c_i+t =
  (\varphi(c))_i$ which gives a contradiction since $\varphi(c)$ is stable
  (and $(\varphi(c))_i< d$) and so $v_i$ must be in $A$.
\end{proof}

\begin{Lem}\label{lem:non-empty}
Let $c$ be a sorted stable configuration on $K_{m,n}$.
Suppose that the set $A$ in Definition~\ref{def_phi} exists.
Then both $v_n$ and $v_{n+m-1}$ are members of $A$.
\end{Lem}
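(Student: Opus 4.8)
The plan is to decompose the set $A$ according to the bipartition of $K_{m,n}$ and to show that $A$ must meet both components; the conclusion then follows immediately from Lemma~\ref{lem:segment}. Write $A = A_1 \cup A_2$, where $A_1 = A \cap \{1,\dots,n\}$ is the part of $A$ lying in the non-sink component (whose vertices have degree $m$) and $A_2 = A \cap \{n+1,\dots,n+m-1\}$ is the part lying in the sink component (whose vertices have degree $n$, the sink $v_{n+m}$ itself being excluded from $A$ by Definition~\ref{def_phi}). Because $c$ is sorted, $c_n = \max(c^{\leq n})$ and $c_{n+m-1} = \max(c^{>n})$, so once we know $A_1 \neq \emptyset$ we obtain $v_n \in A$ from Lemma~\ref{lem:segment}, and once we know $A_2 \neq \emptyset$ we obtain $v_{n+m-1} \in A$. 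Thus it suffices to prove that both $A_1$ and $A_2$ are non-empty.

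The heart of the argument is a direct computation of the components of $c - \Delta_A = \varphi(c)$ at the vertices of $A$. Since $K_{m,n}$ is bipartite there are no edges inside either component, so for $i \in A_1$ the only negative contributions to $(\Delta_A)_i$ come from the vertices of $A_2$ (each of which is adjacent to $v_i$), giving $(\Delta_A)_i = m - |A_2|$ and hence $(\varphi(c))_i = c_i - m + |A_2|$. Symmetrically, for $j \in A_2$ one gets $(\varphi(c))_j = c_j - n + |A_1|$.

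Now combine these identities with the two facts that $\varphi(c)$ is non-negative and that $c$ is stable. If $A_1 \neq \emptyset$, pick $i \in A_1$; non-negativity of $\varphi(c)$ forces $c_i - m + |A_2| \geq 0$, while stability of $c$ gives $c_i \leq m-1$, so $|A_2| \geq m - c_i \geq 1$ and therefore $A_2 \neq \emptyset$. The same computation with the roles of the two components exchanged shows that $A_2 \neq \emptyset$ forces $|A_1| \geq n - c_j \geq 1$ for any $j \in A_2$ (using $c_j \leq n-1$), so $A_1 \neq \emptyset$. Since $A$ is non-empty by hypothesis, at least one of $A_1, A_2$ is non-empty, and the two implications just established then make both of them non-empty. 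Applying Lemma~\ref{lem:segment} as described above yields $v_n, v_{n+m-1} \in A$. The only delicate point is the bookkeeping of how $\Delta_A$ acts across the complete bipartite edge set --- in particular keeping the two degrees $m$ and $n$ straight, and distinguishing the local use of $n$ as the size of the non-sink component --- but no genuine difficulty arises beyond this.
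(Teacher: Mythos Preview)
Your proof is correct and follows essentially the same approach as the paper's: both arguments split $A$ across the two components, use stability of $c$ together with non-negativity of $\varphi(c)$ at a vertex of $A$ to force the other component to intersect $A$ (your $|A_2|$ is the paper's $t$), and then invoke Lemma~\ref{lem:segment} with the sortedness of $c$ to single out $v_n$ and $v_{n+m-1}$. The only cosmetic difference is the order of operations: the paper first applies Lemma~\ref{lem:segment} to pass from ``$A$ non-empty'' to ``$v_n\in A$ or $v_{n+m-1}\in A$'' and then does the inequality, whereas you do the inequality for generic elements of $A_1,A_2$ and apply Lemma~\ref{lem:segment} at the end.
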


\begin{proof}
  Since $A$ is non-empty, from Lemma~\ref{lem:segment} $v_n\in A$ or
  $v_{n+m-1}\in A$. Since the proof is symmetric in the two cases,
  assume without loss of generality that $v_n\in A$.
  As  $c$ is stable, $c_n \leq m-1$ and because
  $\varphi(c)$ is non-negative $(\varphi(c))_n=c_n-m+t\geq 0$ where $t$ is the
  number of toppled vertices in the sink component. These two
  inequalities imply that $t\geq 1$ so at least one vertex of the
  sink component belongs to $A$ which, by Lemma~\ref{lem:segment}, implies
  $v_{m+n-1}\in A$.
 \end{proof}

\begin{proof}[Proof of Proposition \ref{prop:two-parameter-description-of-A}]
From Lemma~\ref{lem:non-empty}, the two following intersections
$$ A_{\mathrm{non-sink}} = A \cap \{v_1,\ldots ,v_n\} \mbox{ and } A_{\mathrm{sink}} = A \cap \{v_{n+1},\ldots,v_{n+m-1}\}$$
are non-empty so let $v_i$, respectively $v_j$, be the vertex of minimal index of $A_{\mathrm{non-sink}}$, respectively $A_{\mathrm{sink}}$.

From Lemma~\ref{lem:segment} and the fact that $c$ is sorted we have $c_{i-1}<c_i$,
when $c_{i-1}$ exists, and since $c_j\geq c_i$ for all $1\leq j \leq n$ we also have
$$ A_{\mathrm{non-sink}} = \{v_i,v_{i+1},\ldots ,v_{n}\} = \{v_{n-k},\ldots , v_{n}\} $$
where $k = n-i$.
An analogous argument for $A_{\mathrm{sink}}$ gives
\begin{align*}
 A_{\mathrm{sink}} &= \{v_{n+m-1-l},\ldots ,v_{n+m-1}\}.\qedhere
\end{align*}
\end{proof}

 These results culminate in Algorithm~\ref{alg:phi} which computes
 $\varphi$ for any stable configuration. Some minor additional
 terminology is needed: The configuration $0$ is the configuration $c$ such
 that $c_i=0$ at every vertex. The configuration $\delta$ is the
 stable configuration with the maximal number of grains at every vertex, i.e.
 $\delta^{\leq n}_i=m-1$ and $\delta^{>n}_i=n-1$ for all vertices
 $v_i$. Two (partial) configurations $c$ and $q$ satisfy $c\lesquare q$
 if $c_i\leq q_i$ for all $i$.

 The variable $nloops$ counts the number of loop iterations and is used
 exactly when $v$ is a parking configuration, since then the algorithm
 loops endlessly if we do not stop it as we do.

\begin{algorithm}
\caption{An algorithm that computes $\varphi(c)$ for stable configurations
$c$ on $K_{m,n}$}\label{alg:phi}
\begin{algorithmic}[1]
\Procedure{$\varphi$}{$c$}
   \State $c'\gets T^{\leq n} \cdot T^{>n}(c)$
   \State $nloops\gets 0$
   \While{not($0\lesquare c' \lesquare \delta$)}
    \State {\bf if} $nloops \geq m+n$ {\bf then} {\bf return $c$} {\bf end if}
    \State {\bf if} $c'^{\leq n}\nlesquare \delta^{\leq n}$ or $0^{>n}\nlesquare c'^{>n}$ {\bf then}  $c'\gets T^{\leq n}(c')$ {\bf end if}
    \State {\bf if} $c'^{> n}\nlesquare \delta^{> n}$ or $0^{\leq n}\nlesquare c'^{\leq n}$ {\bf then} $c'\gets T^{> n}(c')$ {\bf end if}
    \State $nloops\gets nloops+1$
   \EndWhile\label{euclidendwhile}
   \State \textbf{return} $c'$
\EndProcedure
\end{algorithmic}
\end{algorithm}

\section{Interpreting Algorithm~\ref{alg:phi} as a moving frame on pairs of paths}

In this section we bring together the results of the previous
sections. We will represent toppling and permuting equivalent classes
of sandpile configurations on $K_{m,n}$ as bi-infinite pairs of paths
in the plane, up to translation. The types of paths are precisely
those that were used in Section~\ref{sec:cyclic_lemma}.
Sorted configurations of the sandpile model may be read from these
bi-infinite paths by placing a `frame' at certain points of
intersection and performing measurements to steps of the paths from
this frame. We present and prove results which show how the
calculation of $\varphi$ on sandpile configurations in
Algorithm~\ref{alg:phi} may be interpreted as the moving of the frame
to a new point in the plane.

The underlying theme of this section is graphic in nature and we
encourage the reader to refer to the examples in the diagrams when
attempting to interpret the results.

Let $(u'',\ell')\in B_{m-1,n-1}\times B_{m-1,n}$ be a pair of
binomial words. We will assume that there are two bi-infinite
paths, $(Nu'')^{\ZZ}$ which is coloured red and $(E\ell')^{\ZZ}$
which is coloured green, in the plane. We will label half of these
steps as follows and refer to this labelling as a {\it{canonical
labelling}}.

\begin{itemize}
\item Label every $N$ step in $(E\ell')^{\ZZ}$ with $N_i$ where $i$ is the ordinate of the lower point of the step.
\item Label every $E$ step in $(Nu'')^{\ZZ}$ with $E_i$ where $i$ is the abscissa of the leftmost point of the step.
\end{itemize}
\figureornotfigure{
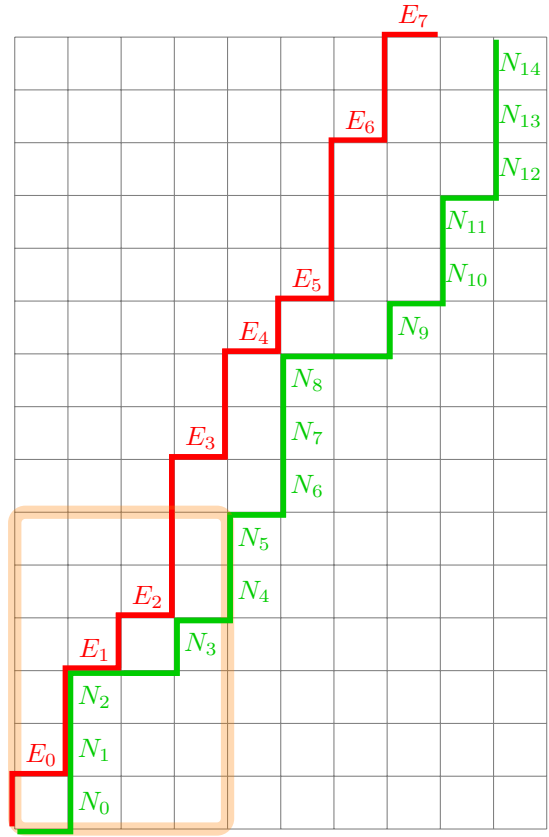
\begin{figure}[!h]
\begin{tikzpicture}[scale=0.7]
\draw[draw=white!50!black]  (0,0) grid (10, 15);
\draw[line width=2,draw=red] (4.95, 9.05)--(4.95, 10.05)--(5.95, 10.05)--(5.95, 13.05)--(6.95, 13.05)--(6.95, 15.05)--(7.95, 15.05)--(7.95, 15.05);
\draw[line width=2,draw=dgreen] (5.05, 8.95)--(7.05, 8.95)--(7.05, 9.95)--(8.05, 9.95)--(8.05, 10.95)--(8.05, 10.95)--(8.05, 11.95)--(9.05, 11.95)--(9.05, 12.95)--(9.05, 12.95)--(9.05, 13.95)--(9.05, 13.95)--(9.05, 14.95);
\foreach \x/\ux in {0/1,1/3,2/4,3/7,4/9,5/10,6/13,7/15}{
\draw[red] node at (\x+0.5,\ux+0.4) {\small$E_{\x}$};
}
\foreach \x/\ux in {1/0,1/1,1/2,3/3,4/4,4/5,5/6,5/7,5/8,7/9,8/10,8/11,9/12,9/13,9/14}{
\draw[dgreen] node at (\x+0.5,\ux+0.5) {\small $N_{\ux}$};
}
\draw[line width=2,draw=red] (2.95, 4.05)--(2.95, 7.05)--(3.95, 7.05)--(3.95, 9.05)--(4.95, 9.05)--(4.95, 10.05)--(5.95, 10.05)--(5.95, 10.05);
\draw[line width=2,draw=dgreen] (3.05, 3.95)--(4.05, 3.95)--(4.05, 4.95)--(4.05, 4.95)--(4.05, 5.95)--(5.05, 5.95)--(5.05, 6.95)--(5.05, 6.95)--(5.05, 7.95)--(5.05, 7.95)--(5.05, 8.95)--(7.05, 8.95)--(7.05, 9.95);
\draw[line width=2,draw=red] (1.95, 3.05)--(1.95, 4.05)--(2.95, 4.05)--(2.95, 7.05)--(3.95, 7.05)--(3.95, 9.05)--(4.95, 9.05)--(4.95, 9.05);
\draw[line width=2,draw=dgreen] (2.05, 2.95)--(3.05, 2.95)--(3.05, 3.95)--(4.05, 3.95)--(4.05, 4.95)--(4.05, 4.95)--(4.05, 5.95)--(5.05, 5.95)--(5.05, 6.95)--(5.05, 6.95)--(5.05, 7.95)--(5.05, 7.95)--(5.05, 8.95);
\draw[rounded corners, opacity=0.3, line width=5, draw = orange] (0, 0) rectangle (4, 6);
\draw[line width=2,draw=red] (-0.05, 0.05)--(-0.05, 1.05)--(0.95, 1.05)--(0.95, 3.05)--(1.95, 3.05)--(1.95, 4.05)--(2.95, 4.05)--(2.95, 6.05);
\draw[line width=2,draw=dgreen] (0.05, -0.05)--(1.05, -0.05)--(1.05, 0.95)--(1.05, 0.95)--(1.05, 1.95)--(1.05, 1.95)--(1.05, 2.95)--(3.05, 2.95)--(3.05, 3.95)--(4.05, 3.95)--(4.05, 4.95)--(4.05, 4.95)--(4.05, 5.95);
\end{tikzpicture}
\caption{ \label{fig:canon_label} The canonical labelling of the
pair of bi-infinite paths
$((Nu'')^\mathbb{Z},(E\ell')^\mathbb{Z})$ where $u''=ENNENENN$,
$\ell'=NNNEENENN$. The bottom left corner is the origin. Notice
that the steps that have been labelled are precisely those steps
of Figure~\ref{fig:infinite-pair} that were not labelled. Note
that $m=4$ and $n=6$. }
\end{figure}
}
See Figure~\ref{fig:canon_label} for an example of this labelling
for the paths $u''$ and $\ell'$ used in
Figure~\ref{fig:infinite-pair}.

\begin{Def}
Given $m,n \in \mathbb{N}$ and a point $y=(y_1,y_2) \in \ZZ^2$, a
{\it{frame}} is a collection of coloured edges which are anchored
about a point $y$, and have labels as shown in
Figure~\ref{figframe}. We denote this frame by $\Frame{m}{n}{y}$.
\end{Def}
\figureornotfigure{
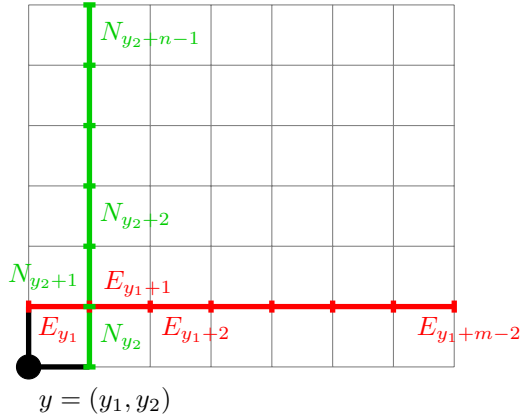
\begin{figure}[!h]
\newcommand{\myzsize}{0.8}
\newcommand{\csty}{\scriptstyle}
\begin{tikzpicture}[scale=\myzsize]
\draw[draw=white!50!black]  (0,0) grid (7,6);
\draw[line width=2,draw=black] (0,1)--(0,0)--(1,0);
\draw[line width=2,draw=dgreen] (1,0)--(1,6);
\draw[line width=2,draw=red] (0,1)--(7,1);
\node (ypoint) at (0,0) {};
\draw[fill=black] (0,0) circle (0.2);
\node [black, below right] at (ypoint.south) {\small $y=(y_1,y_2)$};
\node [red,below] at (0.5,1) {\small $E_{y_1}$};
\node [red,above] at (1.8,1) {\small $E_{y_1+1}$};
\node [red,below] at (2.75,1) {\small $E_{y_1+2}$};
\node [red,below] at (7.25,1) {\small $E_{y_1+m-2}$};
\node [dgreen,right] at (1,0.5) {\small $N_{y_2}$};
\node [dgreen,left] at (1,1.5) {\small $N_{y_2+1}$};
\node [dgreen,right] at (1,2.5) {\small $N_{y_2+2}$};
\node [dgreen,right] at (1,5.5) {\small $N_{y_2+n-1}$};
\foreach \x in {0,1,2,3,4,5,6,7}{
    \draw[line width=2,red] (\x,1-0.1)--(\x,1+0.1);
    }
\foreach \y in {0,1,2,3,4,5,6}{
    \draw[line width=2,dgreen] (1-0.1,\y)--(1+0.1,\y);
    }
\end{tikzpicture}
\caption{The frame $\Frame{m}{n}{y}$.\label{figframe}}
\end{figure}
}

The frame is something we will use to measure distances to steps
in the bi-infinite path from.
\newcommand{\PP}{\mathcal{P}}
\newcommand{\FF}{\mathcal{F}}
\newcommand{\Measure}[2]{#1 \left( #2 \right)}

\begin{Def}
\label{def:frame_one} Let $\FF_y=\Frame{m}{n}{y}$ and $(u'',\ell')
\in B_{m-1,n-1}\times B_{m-1,n}$ where $m,n \in \ZZ$ and
$y=(y_1,y_2) \in \ZZ^2$. Consider the pair of bi-infinite paths
$\PP=((Nu'')^{\ZZ},(E\ell')^{\ZZ})$ (coloured red and green,
respectively) and suppose they are canonically labelled in the
sense outlined above. A {\it{measurement}} of $\PP$ with respect
to a frame $\FF_y$ is a sequence of numbers describing the
horizontal and vertical distances from steps of the frame to steps
of the path which have the same label:
$$\myframeconfiguration{u''}{\ell'}{y} = (a_0,\ldots,a_{n-1},b_0,\ldots,b_{m-2})$$
where $a_i$ is the horizontal distance from step $N_{y_2+i}$ of
$\FF_y$ to the corresponding step in $\PP$ and $b_j$ is the
vertical distance from step $E_{y_1+j}$ of $\FF_y$ to the
corresponding step in $\PP$.
\end{Def}

\begin{Ex}\label{frame_path_example}
Let $m=4$, $n=6$ and $y=(2,7)$. Suppose that the pair of
bi-infinite paths $\PP$ to be the same as in
Figure~\ref{fig:canon_label}. The frame $\Frame{4}{6}{(2,7)}$ is
illustrated in Figure~\ref{fig:frame_measure}. The horizontal
distance from the frames lowest north step $N_7$ to the $N_7$ on
$\PP$ is $2$, so the first entry of
$\myframeconfiguration{u''}{\ell'}{y}$ is $2$. The horizontal
distance from the next lowest north step of the frame $N_8$ to the
corresponding one on the path is also $2$, so the second entry of
$\myframeconfiguration{u''}{\ell'}{y}$ is $2$. For step $N_9$, the
horizontal distance to step $N_9$ on the path $\PP$ is $4$, so the
third entry of $\myframeconfiguration{u''}{\ell'}{y}$ is $4$.
Doing the same for the steps $N_{10}$, $N_{11}$ and $N_{12}$ we
get the values $5$, $5$, and $6$, respectively. The values for
$\myframeconfiguration{u''}{\ell'}{y}$ are $(2,2,4,5,5,6)$ so far.

Next we consider the east steps of the frame from left to right.
The vertical distance from step $E_2$ of the frame to step $E_2$
of $\PP$ is $-4$. This means the next entry of
$\myframeconfiguration{u''}{\ell'}{y}$ is $-4$. For $E_3$, the
vertical distance is $-1$ so the next entry of
$\myframeconfiguration{u''}{\ell'}{y}$ is $-1$. For steps $E_4$
that value is $+1$.

Therefore $\myframeconfiguration{u''}{\ell'}{y} = (2,2,4,5,5,6,-4,-1,1)$.
\figureornotfigure{
\begin{figure}[!h]
\begin{tikzpicture}[scale=0.7, trans/.style={red,thick,->,
>=stealth}]
\draw[draw=white!50!black]  (0,0) grid (10, 15);
\draw[line width=2,draw=red] (4.95, 9.05)--(4.95, 10.05)--(5.95, 10.05)--(5.95, 13.05)--(6.95, 13.05)--(6.95, 15.05)--(7.95, 15.05)--(7.95, 15.05);
\draw[line width=2,draw=dgreen] (5.05, 8.95)--(7.05, 8.95)--(7.05, 9.95)--(8.05, 9.95)--(8.05, 10.95)--(8.05, 10.95)--(8.05, 11.95)--(9.05, 11.95)--(9.05, 12.95)--(9.05, 12.95)--(9.05, 13.95)--(9.05, 13.95)--(9.05, 14.95);
\foreach \x/\ux in {0/1,1/3,2/4,3/7,4/9,5/10,6/13,7/15}{
\draw[red] node at (\x+0.5,\ux+0.4) {\small$E_{\x}$};
}
\foreach \x/\ux in {1/0,1/1,1/2,3/3,4/4,4/5,5/6,5/7,5/8,7/9,8/10,8/11,9/12,9/13,9/14}{
\draw[dgreen] node at (\x+0.5,\ux+0.5) {\small $N_{\ux}$};
}
\draw[line width=2,draw=red] (2.95, 4.05)--(2.95, 7.05)--(3.95, 7.05)--(3.95, 9.05)--(4.95, 9.05)--(4.95, 10.05)--(5.95, 10.05)--(5.95, 10.05);
\draw[line width=2,draw=dgreen] (3.05, 3.95)--(4.05, 3.95)--(4.05, 4.95)--(4.05, 4.95)--(4.05, 5.95)--(5.05, 5.95)--(5.05, 6.95)--(5.05, 6.95)--(5.05, 7.95)--(5.05, 7.95)--(5.05, 8.95)--(7.05, 8.95)--(7.05, 9.95);
\draw[line width=2,draw=red] (1.95, 3.05)--(1.95, 4.05)--(2.95, 4.05)--(2.95, 7.05)--(3.95, 7.05)--(3.95, 9.05)--(4.95, 9.05)--(4.95, 9.05);
\draw[line width=2,draw=dgreen] (2.05, 2.95)--(3.05, 2.95)--(3.05, 3.95)--(4.05, 3.95)--(4.05, 4.95)--(4.05, 4.95)--(4.05, 5.95)--(5.05, 5.95)--(5.05, 6.95)--(5.05, 6.95)--(5.05, 7.95)--(5.05, 7.95)--(5.05, 8.95);
\draw[line width=2,draw=red] (-0.05, 0.05)--(-0.05, 1.05)--(0.95, 1.05)--(0.95, 3.05)--(1.95, 3.05)--(1.95, 4.05)--(2.95, 4.05)--(2.95, 6.05);
\draw[line width=2,draw=dgreen] (0.05, -0.05)--(1.05, -0.05)--(1.05, 0.95)--(1.05, 0.95)--(1.05, 1.95)--(1.05, 1.95)--(1.05, 2.95)--(3.05, 2.95)--(3.05, 3.95)--(4.05, 3.95)--(4.05, 4.95)--(4.05, 4.95)--(4.05, 5.95);
\draw[line width=2,draw=black] (3,7)--(2,7)--(2,8);
\draw[line width=2,draw=red] (2,8)--(5,8);
\draw[line width=2,draw=dgreen] (3,7)--(3,13);
\node (ypoint) at (2,7) {};
\draw[fill=black] (2,7) circle (0.2);
\node [black, rotate=0,below left] at (ypoint) {\small $y=(2,7)$};
\node (jake) [red,below] at (1.5,9.5) {\small $E_2$};
\draw[trans] (jake)--(2.1,8.1);
\node [red,above] at (3.5,8) {\small $E_3$};
\node [red,below] at (4.5,8) {\small $E_4$};
\node [dgreen,left] at (3,7.5) {\small $N_7$};
\node [dgreen,left] at (3,8.5) {\small $N_8$};
\node [dgreen,left] at (3,9.5) {\small $N_9$};
\node [dgreen,left] at (3.1,10.5) {\small $N_{10}$};
\node [dgreen,left] at (3.1,11.5) {\small $N_{11}$};
\node [dgreen,left] at (3.1,12.5) {\small $N_{12}$};
\foreach \x in {2,3,4,5}{
    \draw[line width=2,red] (\x,8-0.1)--(\x,8+0.1);
    }
\foreach \y in {2,3,4,5,6,7,8}{
    \draw[line width=2,dgreen] (2.9,5+\y)--(3.1,5+\y);
    }
\end{tikzpicture}
\caption{The frame and bi-infinite path of Example~\ref{frame_path_example}. Note that $m=4$ and $n=6$.
\label{fig:frame_measure}
}
\end{figure}
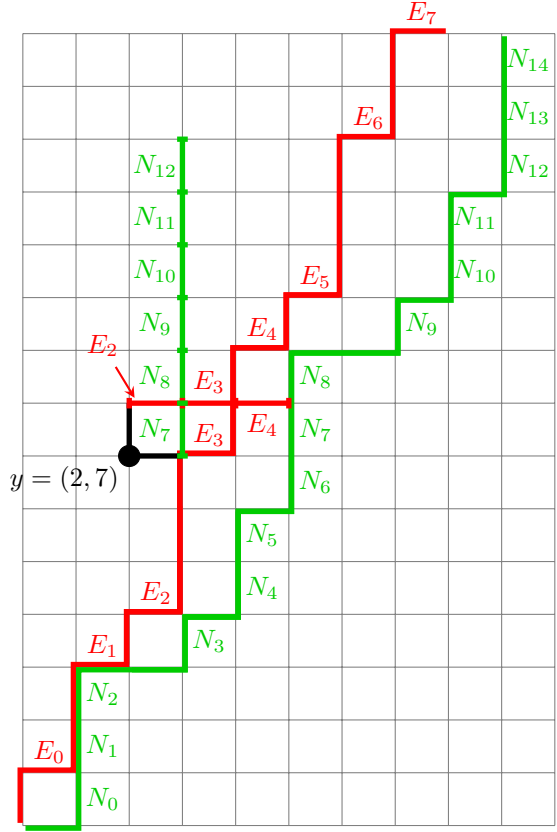
}
\end{Ex}

\subsection{The operator $\varphi$ on $K_{m,n}$}\label{subsec55}

The main result of this subsection is Theorem~\ref{thm55} which
explains the behavior of $\varphi$ on stable configurations in
terms of stable intersections. We will require several technical
lemmas in order to achieve this goal. In order to prove the
required lemmas concerning the frame measurement of paths, we will
need a more algebraic definition of a frame measurement.

\begin{Def}[Equivalent to Definition~\ref{def:frame_one}]
\label{def:frame_two} Let $\FF_y=\Frame{m}{n}{y}$ and $(u'',\ell')
\in B_{m-1,n-1}\times B_{m-1,n}$ where $m,n \in \ZZ$ and
$y=(y_1,y_2) \in \ZZ^2$. Consider the pair of bi-infinite paths
$\PP=((Nu'')^{\ZZ},(E\ell')^{\ZZ})$ and suppose them to be
canonically labelled in the sense outlined above. A
{\it{measurement}} of $\PP$ with respect to a frame $\FF_y$ is a
sequence of numbers describing the horizontal and vertical
distances from steps of the frame to steps of the path which have
the same label:
$$\myframeconfiguration{u''}{\ell'}{y} = (a_0,\ldots,a_{n-1},b_0,\ldots,b_{m-2})$$
where
$$a_i=X_1(N_{y_2+i})-y_1-1 \quad\mbox{ and }\quad b_j=X_2(E_{y_1+j})-y_2-1$$
for all $0\leq i <n$ and $0\leq j <m-1$.
\end{Def}

We will now use this idea of frame measurement to map to
configurations of the sandpile model on $K_{m,n}$ and prove
results concerning them.

\begin{Lem}
\label{lem:frame-implies-compact-range-assumption}
For every $(u'',\ell') \in B_{m-1,n-1}\times B_{m-1,n}$ and $y\in\mathbb{Z}^{2}$, the configuration
$c=(c_1,\ldots,c_{n+m-1})=\myframeconfiguration{u''}{\ell'}{y}$
is a sorted configuration satisfying the compact range assumption.
\end{Lem}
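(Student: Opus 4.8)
The plan is to translate the whole statement into inequalities about the integer quantities $X_1(N_{y_2+i})$ and $X_2(E_{y_1+j})$ supplied by Definition~\ref{def:frame_two}, and then to read off both assertions from two structural features of the bi-infinite pair $\PP=((Nu'')^{\ZZ},(E\ell')^{\ZZ})$: the \emph{monotonicity} of each of its two paths, and their \emph{periodicity}. Since $\ell'\in B_{m-1,n}$, the word $E\ell'$ has $m$ east and $n$ north steps, so the green path $(E\ell')^{\ZZ}$ is invariant under translation by $(m,n)$; since $u''\in B_{m-1,n-1}$, the word $Nu''$ has $m-1$ east and $n$ north steps, so the red path $(Nu'')^{\ZZ}$ is invariant under translation by $(m-1,n)$. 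Because each path is weakly increasing in both coordinates, every integer height is the lower endpoint of exactly one green north step and every integer abscissa is the left endpoint of exactly one red east step, so the labels $N_j$ and $E_i$ are well defined for all $j,i\in\ZZ$ and the entries $a_i=X_1(N_{y_2+i})-y_1-1$ and $b_j=X_2(E_{y_1+j})-y_2-1$ of $c$ are meaningful integers.

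First I would show that $c$ is sorted. For the non-sink block $(a_0,\dots,a_{n-1})$ it suffices to check $X_1(N_{y_2+i})\le X_1(N_{y_2+i+1})$; but the north step at height $y_2+i+1$ is the next north step after the one at height $y_2+i$ along the green path, and a path made of $N$ and $E$ steps only never decreases its abscissa between them, so this inequality is immediate. The identical argument on the red path gives $X_2(E_{y_1+j})\le X_2(E_{y_1+j+1})$, hence $b_0\le\cdots\le b_{m-2}$. Thus both blocks are weakly increasing and $c$ is sorted.

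Next I would verify the compact range assumption using periodicity. Since the green path has period $(m,n)$, the north step at height $y_2+n$ is the translate by $(m,n)$ of the one at height $y_2$, so $X_1(N_{y_2+n})=X_1(N_{y_2})+m$; combining this with the monotonicity bound $X_1(N_{y_2+n-1})\le X_1(N_{y_2+n})$ yields $\max(c^{\le n})-\min(c^{\le n})=a_{n-1}-a_0=X_1(N_{y_2+n-1})-X_1(N_{y_2})\le m$. Symmetrically, the red period $(m-1,n)$ gives $X_2(E_{y_1+m-1})=X_2(E_{y_1})+n$, and together with $X_2(E_{y_1+m-2})\le X_2(E_{y_1+m-1})$ this produces $\max(c^{>n})-\min(c^{>n})=b_{m-2}-b_0\le n$, which is exactly the compact range assumption.

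The computation is essentially bookkeeping, and the single point that genuinely needs care — and which I expect to be the only real obstacle — is matching the two slightly different periods to the two different bounds: the length-$n$ non-sink block is controlled by the green period $(m,n)$, which is why its spread is bounded by $m$, whereas the length-$(m-1)$ sink block is controlled by the red period $(m-1,n)$, which is why its spread is bounded by $n$. Keeping straight which path carries which family of labelled steps (green north steps $\leftrightarrow$ the $a_i$, red east steps $\leftrightarrow$ the $b_j$) is the crux; once that correspondence is fixed, monotonicity delivers sortedness and one period of each path delivers the two range bounds.
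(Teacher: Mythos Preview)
Your proof is correct and follows essentially the same approach as the paper: sortedness from the weak monotonicity of $(X_1(N_i))_i$ and $(X_2(E_j))_j$, and the compact range bounds from one full period of each bi-infinite path. The only cosmetic difference is that the paper phrases the non-sink bound as ``between $N_{y_2}$ and $N_{y_2+n-1}$ there are at most $m$ east steps,'' whereas you invoke the translation identity $X_1(N_{y_2+n})=X_1(N_{y_2})+m$ together with monotonicity; these are equivalent formulations of the same periodicity argument.
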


\begin{proof}
  The configuration $c$ is sorted since the sequences $(X_1(N_i))_{i\in\mathbb{Z}}$
  and $(X_2(E_i))_{i\in\mathbb{Z}}$ are weakly increasing in
  the bi-infinite binomial paths $(E\ell')^\mathbb{Z}$ and
  $(Nu'')^\mathbb{Z}$ respectively.

  The configuration $c$ satisfies the compact range assumption on the
  non-sink component $c^{\leq n}$: $N_{y_2}$ (respectively
  $N_{y_2+n-1}$) is the first (respectively last) north step of the
  periodic pattern, which is a conjugate to $E\ell'$ that is a binomial
  path of $B_{m,n}$. Between $N_{y_2}$ and $N_{y_2+n-1}$ there are
  at most $m$ east steps so
$$ m \geq X_1(N_{y_2+n-1})- X_1(N_{y_2})= \left(X_1(N_{y_2+n-1})-y_1-1\right) - \left(X_1(N_{y_2})-y_1-1\right) = c_n-c_1.$$
A similar argument about the east steps $E_{y_1}$ and $E_{y_1+m-2}$ of
$(Nu'')^\mathbb{Z}$ alongside a consideration of the $n$ north steps of a conjugate of
$Nu''$ shows that $c_{m+n-1}-c_{n+1}\leq n$.
Therefore the configuration $c$ satisfies the compact range assumption.
\end{proof}

The following lemma shows that every stable sorted configuration
on $K_{m,n}$ can be described by at least one frame
$\Frame{m}{n}{y}$ and a pair of paths $(u'',\ell')$.

\begin{Lem}
\label{lem:existence-of-frame-description} For any stable sorted
configuration $c=(c_1,\ldots,c_{n+m})$ on $K_{m,n}$ there exists a
triple $(u'',\ell',y)\in B_{m-1,n-1}\times B_{m-1,n}\times
\mathbb{Z}^2$ such that $c =
\myframeconfiguration{u''}{\ell'}{y}.$
\end{Lem}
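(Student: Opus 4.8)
The plan is to construct the required triple explicitly from $c$, taking the frame point to be the origin $y=(0,0)$. The observation that makes this tractable is that in Definition~\ref{def:frame_two} the entries $a_i=X_1(N_{y_2+i})-y_1-1$ depend only on the green path $(E\ell')^{\ZZ}$, while the entries $b_j=X_2(E_{y_1+j})-y_2-1$ depend only on the red path $(Nu'')^{\ZZ}$. Hence the two paths may be designed independently: I will build the green path so that its measurement returns the non-sink part $(c_1,\dots,c_n)$, and the red path so that its measurement returns the sink part $(c_{n+1},\dots,c_{n+m-1})$.

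For the green path, recall that a monotone bi-infinite path of period $(m,n)$ is completely determined by the abscissa $g_k:=X_1(N_k)$ of the unique north step whose lower endpoint has ordinate $k$, subject only to $(g_k)_{k\in\ZZ}$ being weakly increasing with $g_{k+n}=g_k+m$. I set $g_i:=c_{i+1}+1$ for $0\le i\le n-1$ and extend by periodicity. Weak monotonicity inside one period is exactly the sortedness $c_1\le\cdots\le c_n$, while the wrap-around inequality $g_{n-1}\le g_n=g_0+m$ amounts to $c_n-c_1\le m$, which holds since $c$ is stable. This data defines a genuine green path, and it remains to check that it is of the form $(E\ell')^{\ZZ}$ anchored at the origin. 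Since $g_0=c_1+1\ge 1>0$, the step leaving $(0,0)$ is east; since stability gives $c_n\le m-1$, we obtain $g_{-1}=g_{n-1}-m=c_n+1-m\le 0$, so $(0,0)$ lies on the horizontal run at height $0$. The period word from $(0,0)$ to $(m,n)$ therefore begins with $E$ and reads $E\ell'$ with $\ell'\in B_{m-1,n}$.

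The red path is handled symmetrically, now using the ordinate $r_k:=X_2(E_k)$ of the east step with leftmost abscissa $k$, which satisfies $r_{k+(m-1)}=r_k+n$. I set $r_j:=c_{n+1+j}+1$ for $0\le j\le m-2$ and periodize; monotonicity within a period is the sortedness $c_{n+1}\le\cdots\le c_{n+m-1}$, and the wrap-around $r_{m-2}\le r_0+n$ is $c_{n+m-1}-c_{n+1}\le n$. The bounds $r_0=c_{n+1}+1\ge 1$ and, by stability, $r_{-1}=c_{n+m-1}+1-n\le 0$ show that $(0,0)$ lies on the vertical run at abscissa $0$ and is followed by a north step, so the period reads $Nu''$ with $u''\in B_{m-1,n-1}$. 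By construction $X_1(N_i)=c_{i+1}+1$ and $X_2(E_j)=c_{n+1+j}+1$, so with $y=(0,0)$ Definition~\ref{def:frame_two} returns $a_i=c_{i+1}$ and $b_j=c_{n+1+j}$, that is $\myframeconfiguration{u''}{\ell'}{(0,0)}=c$ on the $n+m-1$ relevant entries (the sink height being immaterial).

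The routine part is the monotonicity of the two profiles, which is immediate from sortedness together with compact range. I expect the only genuine subtlety to be the anchoring step: verifying that the origin lies on each path and is followed by the correct initial step ($E$ for the green path, $N$ for the red path), since this is what guarantees the decompositions $(E\ell')^{\ZZ}$ and $(Nu'')^{\ZZ}$ with $\ell'\in B_{m-1,n}$ and $u''\in B_{m-1,n-1}$. This is precisely where the strict stability bounds $c_n\le m-1$ and $c_{n+m-1}\le n-1$ are used, rather than merely the compact range assumption, which is what distinguishes this construction from the converse direction recorded in Lemma~\ref{lem:frame-implies-compact-range-assumption}.
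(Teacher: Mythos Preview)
Your proof is correct and takes essentially the same approach as the paper: both set $y=(0,0)$ and construct $u''$ and $\ell'$ directly from the entries of $c$, yielding the same pair of paths. The paper writes down the words explicitly as $u''=(N^{c_{n+1}}E)(N^{c_{n+2}-c_{n+1}}E)\cdots N^{n-1-c_{n+m-1}}$ and $\ell'=(E^{c_1}N)(E^{c_2-c_1}N)\cdots E^{m-1-c_n}$ and leaves the verification to the reader, whereas you parametrize the same paths by the step-coordinate sequences $g_i=X_1(N_i)=c_{i+1}+1$ and $r_j=X_2(E_j)=c_{n+1+j}+1$ and carry out the checks in detail; your discussion of the anchoring at the origin is exactly the content the paper omits.
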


\begin{proof}
A triple $(u'',\ell',y)$ for which $ c = \myframeconfiguration{u''}{\ell'}{y}$ is given by
\begin{align*}
u''& = (N^{c_{n+1}}E)(N^{c_{n+2}-c_{n+1}}E)\ldots (N^{c_{n+m-1}-c_{n+m-2}}E)N^{n-1-c_{n+m-1}}\\
\ell' & = (E^{c_1}N)(E^{c_2-c_1}N)\ldots (E^{c_{n}-c_{n-1}}N)E^{m-1-c_{n}}\\
y & = (0,0) .
\end{align*}
This triple is well-defined because the configuration is
non-negative, sorted, and stable. We leave it to the reader to
verify that $\myframeconfiguration{u''}{\ell'}{y} = c$.
\end{proof}

The effect of the toppling $T^{\leq n}$ (respectively $T^{>n}$)
used in Algorithm~\ref{alg:phi} may be interpreted as a move of
the frame one unit step to the south (respectively west) without
changing the bi-infinite paths.

\begin{Lem}
\label{lem:toppling-and-frame} For every $(u'',\ell')\in
B_{m-1,n-1}\times B_{m-1,n}$ and $y=(y_1,y_2)\in\mathbb{Z}^2$, we
have
\begin{align*}
T^{\leq n}\left(\myframeconfiguration{u''}{\ell'}{(y_1,y_2)}\right) &= \myframeconfiguration{u''}{\ell'}{(y_1,y_2-1)}\\
T^{>n}\left(\myframeconfiguration{u''}{\ell'}{(y_1,y_2)}\right) &= \myframeconfiguration{u''}{\ell'}{(y_1-1,y_2)}.
\end{align*}
\end{Lem}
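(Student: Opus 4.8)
The plan is to prove the two identities directly by computing both sides explicitly using Definition~\ref{def:frame_two}, which expresses the frame measurement in terms of the coordinates $X_1(N_{y_2+i})$ and $X_2(E_{y_1+j})$ of the steps in the fixed bi-infinite pair $\PP=((Nu'')^{\ZZ},(E\ell')^{\ZZ})$. The crucial point is that the pair of paths does not change when we move the frame; only the indices of the steps that the frame reads off change. So the whole argument reduces to tracking how the entries $a_i$ and $b_j$ transform when the anchor $y$ is shifted by one unit, and then matching this against the explicit formula for $T^{\leq n}$ and $T^{>n}$ given in Lemma~\ref{lem:operator-on-compact-range}. Since Lemma~\ref{lem:frame-implies-compact-range-assumption} guarantees that $\myframeconfiguration{u''}{\ell'}{(y_1,y_2)}$ is sorted and satisfies the compact range assumption, Lemma~\ref{lem:operator-on-compact-range} applies and gives us a clean closed form for the toppling operators to compare against.

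First I would treat the $T^{\leq n}$ case. Writing $c=\myframeconfiguration{u''}{\ell'}{(y_1,y_2)}=(a_0,\dots,a_{n-1},b_0,\dots,b_{m-2})$ and $c^-=\myframeconfiguration{u''}{\ell'}{(y_1,y_2-1)}$, I would compute each entry of $c^-$ from Definition~\ref{def:frame_two} with $y_2$ replaced by $y_2-1$. For the north-step entries, the frame now reads steps $N_{y_2-1+i}$ for $0\le i<n$; matching indices, the new $i$-th entry equals $X_1(N_{y_2-1+i})-y_1-1$, which for $i\ge 1$ is exactly the old $(i{-}1)$-th entry $a_{i-1}$, so the non-sink part undergoes a cyclic shift with a new first entry coming from $N_{y_2-1}$. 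For the east-step entries, with $y_2$ decreased by one, each $b_j=X_2(E_{y_1+j})-y_2-1$ increases by $1$. Then I would check that the new first north entry $X_1(N_{y_2-1})-y_1-1$ equals $c_n-m$, using the compact range / periodicity structure, exactly as in the proof of Lemma~\ref{lem:operator-on-compact-range}. Putting these together reproduces $T^{\leq n}(c)=(c_n-m,c_1,\dots,c_{n-1},1+c_{n+1},\dots,1+c_{n+m-1})$, as desired. The $T^{>n}$ identity follows by the symmetric computation, shifting $y_1$ by one and using that the red path $(Nu'')^{\ZZ}$ now contributes the shifted east indices while every green north entry increases by one.

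The main obstacle I expect is the bookkeeping for the new "wrap-around" first entry: verifying that $X_1(N_{y_2-1})-y_1-1=c_n-m$ (and its analogue $X_2(E_{y_1-1})-y_2-1=c_{n+m-1}-n$ in the $T^{>n}$ case). This is precisely where the periodicity of the bi-infinite paths enters, since $N_{y_2-1}$ and $N_{y_2+n-1}$ differ by one full period of the conjugate of $E\ell'$, so their $X_1$-coordinates differ by $m$; combined with the compact range bound this pins down the value. Everything else is a routine index shift, so I would state the periodicity relation once, use it for both wrap-around entries, and let symmetry dispatch the second identity rather than writing it out in full.
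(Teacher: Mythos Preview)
Your proposal is correct and follows essentially the same route as the paper: compute the entries of $\myframeconfiguration{u''}{\ell'}{(y_1,y_2-1)}$ directly from Definition~\ref{def:frame_two}, observe that the east-step entries each gain $1$ while the north-step entries undergo a cyclic shift, handle the wrap-around entry via the periodicity identity $X_1(N_{y_2-1})=X_1(N_{y_2+n-1})-m$, and then invoke Lemmas~\ref{lem:frame-implies-compact-range-assumption} and~\ref{lem:operator-on-compact-range} to identify the result with $T^{\leq n}(c)$; the $T^{>n}$ case is dispatched by symmetry using $X_2(E_{y_1-1})=X_2(E_{y_1+m-2})-n$. The only small wording slip is your phrase ``exactly as in the proof of Lemma~\ref{lem:operator-on-compact-range}'': that lemma's proof uses the compact range bound to justify the sorted form of $T^{\leq n}(c)$, not the periodicity of the paths; the periodicity argument is specific to the present lemma, as you correctly spell out in your final paragraph.
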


\begin{proof}
This proof is illustrated by an example in Figure~\ref{fig:example-frame-move}.
\figureornotfigure{
\begin{figure}
\begin{center}
\begin{tikzpicture}[scale=0.7, trans/.style={red,thick,->,
>=stealth}]
\draw[draw=white!50!black]  (0,0) grid (10, 15);
\draw[line width=2,draw=red] (4.95, 9.05)--(4.95, 10.05)--(5.95, 10.05)--(5.95, 13.05)--(6.95, 13.05)--(6.95, 15.05)--(7.95, 15.05)--(7.95, 15.05);
\draw[line width=2,draw=dgreen] (5.05, 8.95)--(7.05, 8.95)--(7.05, 9.95)--(8.05, 9.95)--(8.05, 10.95)--(8.05, 10.95)--(8.05, 11.95)--(9.05, 11.95)--(9.05, 12.95)--(9.05, 12.95)--(9.05, 13.95)--(9.05, 13.95)--(9.05, 14.95);
\foreach \x/\ux in {0/1,1/3,2/4,3/7,4/9,5/10,6/13,7/15}{
\draw[red] node at (\x+0.5,\ux+0.4) {\small$E_{\x}$};
}
\foreach \x/\ux in {1/0,1/1,1/2,3/3,4/4,4/5,5/6,5/7,5/8,7/9,8/10,8/11,9/12,9/13,9/14}{
\draw[dgreen] node at (\x+0.5,\ux+0.5) {\small $N_{\ux}$};
}
\draw[line width=2,draw=red] (2.95, 4.05)--(2.95, 7.05)--(3.95, 7.05)--(3.95, 9.05)--(4.95, 9.05)--(4.95, 10.05)--(5.95, 10.05)--(5.95, 10.05);
\draw[line width=2,draw=dgreen] (3.05, 3.95)--(4.05, 3.95)--(4.05, 4.95)--(4.05, 4.95)--(4.05, 5.95)--(5.05, 5.95)--(5.05, 6.95)--(5.05, 6.95)--(5.05, 7.95)--(5.05, 7.95)--(5.05, 8.95)--(7.05, 8.95)--(7.05, 9.95);
\draw[line width=2,draw=red] (1.95, 3.05)--(1.95, 4.05)--(2.95, 4.05)--(2.95, 7.05)--(3.95, 7.05)--(3.95, 9.05)--(4.95, 9.05)--(4.95, 9.05);
\draw[line width=2,draw=dgreen] (2.05, 2.95)--(3.05, 2.95)--(3.05, 3.95)--(4.05, 3.95)--(4.05, 4.95)--(4.05, 4.95)--(4.05, 5.95)--(5.05, 5.95)--(5.05, 6.95)--(5.05, 6.95)--(5.05, 7.95)--(5.05, 7.95)--(5.05, 8.95);
\draw[line width=2,draw=red] (-0.05, 0.05)--(-0.05, 1.05)--(0.95, 1.05)--(0.95, 3.05)--(1.95, 3.05)--(1.95, 4.05)--(2.95, 4.05)--(2.95, 6.05);
\draw[line width=2,draw=dgreen] (0.05, -0.05)--(1.05, -0.05)--(1.05, 0.95)--(1.05, 0.95)--(1.05, 1.95)--(1.05, 1.95)--(1.05, 2.95)--(3.05, 2.95)--(3.05, 3.95)--(4.05, 3.95)--(4.05, 4.95)--(4.05, 4.95)--(4.05, 5.95);
\draw[line width=2,draw=black] (3,6)--(2,6)--(2,7);
\draw[line width=2,draw=red] (2,7)--(5,7);
\draw[line width=2,draw=dgreen] (3,6)--(3,12);
\node (ypoint) at (2,6) {};
\draw[fill=black] (2,6) circle (0.2);
\node [black, rotate=0,below left] at (ypoint) {\small $y=(2,6)$};
\node (jake) [red,below] at (1.5,8.5) {\small $E_2$};
\draw[trans] (jake)--(2.1,7.1);
\node [red,below] at (3.5,7) {\small $E_3$};
\node [red,below] at (4.5,7) {\small $E_4$};
\node [dgreen,left] at (3,6.5) {\small $N_6$};
\node [dgreen,left] at (3,7.5) {\small $N_7$};
\node [dgreen,left] at (3,8.5) {\small $N_8$};
\node [dgreen,left] at (3,9.5) {\small $N_{9}$};
\node [dgreen,left] at (3.1,10.5) {\small $N_{10}$};
\node [dgreen,left] at (3.1,11.5) {\small $N_{11}$};
\foreach \x in {2,3,4,5}{
        \draw[line width=2,red] (\x,7-0.1)--(\x,7+0.1);
        }
\foreach \y in {2,3,4,5,6,7,8}{
        \draw[line width=2,dgreen] (2.9,4+\y)--(3.1,4+\y);
        }
\end{tikzpicture}
\caption{\label{fig:example-frame-move}Example of moving the frame of Figure~\ref{fig:frame_measure} one step south. Note that $m=4$ and $n=6$.}
\end{center}
\end{figure}
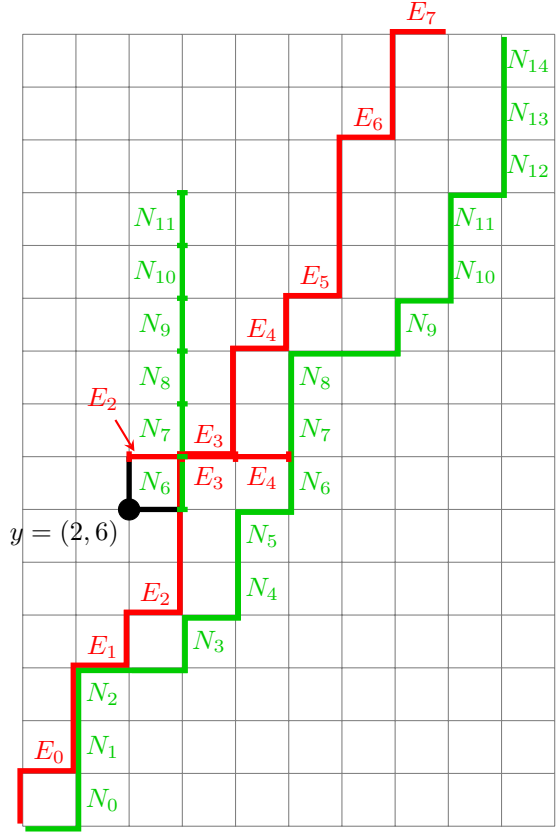
} 
Let
\begin{align*}
\myframeconfiguration{u''}{\ell'}{(y_1,y_2)}&=c = (c_1,\ldots, c_{m+n-1}) \\
\myframeconfiguration{u''}{\ell'}{(y_1,y_2-1)}&=c' = (c'_1,\ldots,c'_{n+m-1}).
\end{align*}
We will describe the configuration $c'$ in terms
of the configuration $c$ by analysing the move of the frames position from
$(y_1,y_2)$ to $(y_1,y_2-1)$.

The decrement of $y_2$ by unity implies that for $i>n$,
$$ c'_i = X_2(E_{y_1+(i-n)-1})-(y_2-1)-1 = (X_2(E_{y_1+(i-n)-1})+1)-y_2-1 = c_i+1.$$
For $i\leq n$, the north steps $(N_{y_2+i-1})_{1\leq i\leq n}$ of
$(E\ell')^\mathbb{Z}$ defining the configuration $c$ (see
Figure~\ref{fig:frame_measure}) become the north steps
$(N_{(y_2-1)+i-1})_{1\leq i \leq n}$ in $c'$ (see
Figure~\ref{fig:example-frame-move}). All steps except the last
step ($N_{y_2+n-1}$) are simply shifted to the next index in this
sequence, and the new first index is $N_{y_2-1}$.

For all of these shifted steps $(N_{y_2+i-1})_{1\leq i< n}$, the
number $X_1(N_{y_2+i-1})-y_1-1$ is unchanged since neither $y_1$
nor $X_1(N_{y_2+i-1})$ changes from $c$ to $c'$.  This implies
that $c'_i = c_{i-1}$ for $2 \leq i \leq n$.

For $i=1$, we remark that the north step $N_{y_2-1}$ that appears
in $(E\ell')^\mathbb{Z}$ which defines $c'_1$ differs from the
disappearing north step $N_{y_2+n-1}$ defining $c_n$ by $n$. This
means that these two north steps are the ``same'' step in the
periodic pattern. This periodic pattern is an $(m,n)$-binomial
path, so $X_1(N_{y_2-1}) = X_1(N_{y_2+n-1})-m$ due to the $m$ east
steps of the periodic pattern. In terms of configurations, it
means $ c'_1 = c_n-m$.

To summarize this discussion we have
$$(c'_1,\ldots,c'_{n+m-1}) = (c_n-m,c_1,\ldots,c_{n-1},c_{n+1}+1,\ldots ,c_{n+m-1}+1) = T^{\leq n}(c)$$
where the rightmost equality comes from
Lemma~\ref{lem:operator-on-compact-range} since, according to
Lemma~\ref{lem:frame-implies-compact-range-assumption}, the
configuration $c$ satisfies the compact range assumption.
The proof for the operator $T^{>n}$ is similar: in particular the
periodic pattern is a $(m-1,n)$-binomial path and we have to consider
$X_2(E_{y_1-1})=X_2(E_{y_1+(m-1)-1})-n$.
\end{proof}

Notice that the operators $T^{\leq n}$ and $T^{>n}$ preserve
toppling and permuting classes since both are the composition of a
toppling and a (cyclic) permutation on one of the components. The
following lemma shows that, with respect to the paths in the
plane, there is only one equivalence class.

\begin{Lem}
  Let $(u'',\ell')\in B_{m-1,n-1}\times B_{m-1,n}$,
  $x=(x_1,x_2)\in \mathbb{Z}^2$ and $y=(y_1,y_2)\in \mathbb{Z}^{2}$.
  The configurations $\myframeconfiguration{u''}{\ell'}{x}$ and
  $\myframeconfiguration{u''}{\ell'}{y}$ are toppling and permuting
  equivalent.
\end{Lem}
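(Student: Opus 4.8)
The plan is to show that any two frame positions $x,y\in\mathbb{Z}^2$ (for fixed $(u'',\ell')$) yield toppling-and-permuting equivalent configurations, and the natural strategy is to connect $x$ to $y$ by a sequence of unit steps in the lattice, invoking Lemma~\ref{lem:toppling-and-frame} at each step. Concretely, first I would observe that decrementing $y_2$ by one corresponds to applying $T^{\leq n}$ and decrementing $y_1$ by one corresponds to applying $T^{>n}$, both of which (as noted in the paragraph preceding the statement) preserve toppling-and-permuting classes since each is a toppling composed with a cyclic permutation of one component.

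The key step is to handle arbitrary displacements, including those requiring one to move \emph{north} or \emph{east} rather than south or west. Since Lemma~\ref{lem:toppling-and-frame} only directly gives the southward and westward moves via $T^{\leq n}$ and $T^{>n}$, I would argue that moving in the opposite direction is still within the same equivalence class: either by noting that $T^{\leq n}$ and $T^{>n}$ are invertible on the relevant configurations (so that their inverses, realizing northward/eastward moves, also preserve the class because toppling-and-permuting equivalence is a genuine equivalence relation and hence symmetric), or more cleanly by exploiting periodicity. Indeed, the bi-infinite paths $((Nu'')^{\mathbb{Z}},(E\ell')^{\mathbb{Z}})$ are periodic with period $(m,n)$ for the green path and $(m-1,n)$ for the red, so frame measurements from points differing by a full period vector are literally equal; this lets me always reduce to reaching $y$ from $x$ using only southward and westward steps after first translating $y$ by a suitable large multiple of a period, avoiding the need to invert at all.

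Thus the argument proceeds: given $x=(x_1,x_2)$ and $y=(y_1,y_2)$, choose integers so that I may walk from $x$ down and left to a lattice point congruent to $y$ modulo the period, applying $T^{\leq n}$ (for each unit decrease in the second coordinate) and $T^{>n}$ (for each unit decrease in the first coordinate) finitely many times. Each application keeps the configuration in the same toppling-and-permuting class, and since toppling-and-permuting equivalence is transitive, the endpoints are equivalent. Finally, because the frame measurement is invariant under translation by a period vector of the paths, the configuration at that congruent point equals $\myframeconfiguration{u''}{\ell'}{y}$, completing the chain of equivalences.

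The main obstacle I anticipate is the bookkeeping around directionality and periodicity: one must verify carefully that the period of the pair of paths is realized simultaneously by both colors (the red path has horizontal period $m-1$ while the green has $m$, yet the measurement depends on both), so the correct translation vector that leaves $\myframeconfiguration{u''}{\ell'}{\cdot}$ invariant is the common period of the \emph{pair}, namely $(m(m-1),\, n\cdot\mathrm{lcm}\text{-type})$ adjustments — or more simply, I should confirm that moving the frame by the appropriate lattice vector genuinely returns the same labelled-step measurement before claiming translation invariance. Once that periodicity bookkeeping is pinned down, the rest is a routine induction on the number of unit frame moves.
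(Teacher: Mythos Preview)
Your core idea---that unit frame moves correspond to $T^{\leq n}$ and $T^{>n}$ and preserve the toppling-and-permuting class---is exactly right, and your invertibility/symmetry alternative for handling northward and eastward moves is valid. However, the periodicity route you prefer does not work: the red path has period $(m-1,n)$ and the green path has period $(m,n)$, and these two vectors are linearly independent over $\mathbb{Q}$, so the only translation leaving \emph{both} paths (and hence the frame measurement) invariant is the zero vector. There is no ``large multiple of a period'' you can add to $y$ to make it reachable from $x$ by purely south/west moves while keeping the measurement unchanged. Your suspicion in the final paragraph is well founded; drop that branch entirely.

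The paper sidesteps the directionality issue more simply than either of your alternatives: rather than connecting $x$ to $y$ directly, it introduces the auxiliary point $z=\bigl(\min(x_1,y_1),\min(x_2,y_2)\bigr)$, which lies weakly southwest of both $x$ and $y$. Then
\[
\myframeconfiguration{u''}{\ell'}{z} \;=\; \bigl(T^{\leq n}\bigr)^{x_2-z_2}\!\cdot \bigl(T^{>n}\bigr)^{x_1-z_1}\bigl(\myframeconfiguration{u''}{\ell'}{x}\bigr),
\]
and likewise with $y$ in place of $x$, using only nonnegative powers of $T^{\leq n}$ and $T^{>n}$. Both configurations are therefore equivalent to the one at $z$, hence to each other. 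This avoids any appeal to inverses, symmetry of the relation, or periodicity bookkeeping.
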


\begin{proof}
  Let $z=(z_1,z_2) = \left(\min(x_1,y_1),\min(x_2,y_2)\right)$.  We
  show that both configurations of the lemma are toppling and permuting
  equivalent to the configuration $\myframeconfiguration{u''}{\ell'}{z}$
  via some applications of the operators $T^{\leq n}$ and $T^{>n}$
  which preserve the toppling and permuting classes.

  Indeed, using Lemma~\ref{lem:toppling-and-frame}, we have
$$ \myframeconfiguration{u''}{\ell'}{z} = \left(T^{\leq n}\right)^{x_2-z_2}\cdot \left(T^{>n}\right)^{x_1-z_1}(\myframeconfiguration{u''}{\ell'}{x})$$
and a similar expression exists for $\myframeconfiguration{u''}{\ell'}{y}.$
\end{proof}

To simulate Algorithm~\ref{alg:phi} using frames it remains to
show that the test in the argument of the `while' condition on
line 4 of the algorithm can be realized in this setting.

\begin{Lem}
\label{lem:bounds-on-stable-configurations}
Let $(u'',\ell',y=(y_1,y_2))\in B_{m-1,n-1}\times B_{m-1,n}\times\mathbb{Z}^2$ and let
$c=\myframeconfiguration{u''}{\ell'}{y}$. Then
\begin{enumerate}
\item[(i)] $c^{\leq n} \nlesquare  \delta^{\leq n}\iff$ $X_1(N_{y_2+n-1})> y_1+m$
\item[(ii)] $c^{\leq n} \ngesquare  0^{\leq n}\iff $ $X_1(N_{y_2})\leq y_1$
\item[(iii)] $c^{>n} \nlesquare  \delta^{>n}\iff$ $X_2(E_{y_1+m-2})> y_2+n$
\item[(iv)] $c^{>n} \ngesquare  0^{>n}\iff$ $X_2(E_{y_1})\leq y_2$.
\end{enumerate}
\end{Lem}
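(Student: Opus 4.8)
The plan is to unfold the definitions from Definition~\ref{def:frame_two} and express each of the four conditions on the configuration $c=\myframeconfiguration{u''}{\ell'}{y}$ directly in terms of the coordinates $X_1(N_{\bullet})$ and $X_2(E_{\bullet})$ of the relevant path steps. Recall that $\delta^{\leq n}_i=m-1$ and $\delta^{>n}_i=n-1$, and that the partial order $\lesquare$ is the componentwise order. Since $c$ is sorted (by Lemma~\ref{lem:frame-implies-compact-range-assumption}), the non-sink entries $c_1\leq\cdots\leq c_n$ are weakly increasing and likewise $c_{n+1}\leq\cdots\leq c_{n+m-1}$; this monotonicity is what lets each componentwise condition collapse to a single inequality at an extreme index.

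I would treat the four parts in two symmetric pairs, doing the non-sink pair (i),(ii) carefully and remarking that (iii),(iv) follow by the same argument with the roles of $N$/$E$, $m$/$n$ and the two components exchanged. For part (i): the condition $c^{\leq n}\nlesquare\delta^{\leq n}$ means some non-sink entry exceeds $m-1$, i.e.\ $c_i\geq m$ for some $i\leq n$. Because the $c_i$ are weakly increasing, this holds if and only if the largest entry $c_n\geq m$. Now substitute $c_n=a_{n-1}=X_1(N_{y_2+n-1})-y_1-1$ from Definition~\ref{def:frame_two}, so the inequality $c_n\geq m$ becomes $X_1(N_{y_2+n-1})-y_1-1\geq m$, that is $X_1(N_{y_2+n-1})\geq y_1+m+1$, which is exactly $X_1(N_{y_2+n-1})> y_1+m$. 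For part (ii): the condition $c^{\leq n}\ngesquare 0^{\leq n}$ means some non-sink entry is negative, i.e.\ $c_i<0$ for some $i\leq n$, and by monotonicity this is equivalent to $c_1<0$, i.e.\ $c_1\leq -1$. Substituting $c_1=a_0=X_1(N_{y_2})-y_1-1$ turns this into $X_1(N_{y_2})-y_1-1\leq -1$, i.e.\ $X_1(N_{y_2})\leq y_1$.

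For the sink pair, the only differences are that $\delta^{>n}$ has entries $n-1$, that the relevant measurements are the $b_j=X_2(E_{y_1+j})-y_2-1$, and that the extreme indices among $c_{n+1},\dots,c_{n+m-1}$ are the first ($j=0$, giving $E_{y_1}$) and last ($j=m-2$, giving $E_{y_1+m-2}$). Thus (iii) $c^{>n}\nlesquare\delta^{>n}$ is equivalent to the maximal sink entry $c_{n+m-1}\geq n$, i.e.\ $X_2(E_{y_1+m-2})-y_2-1\geq n$, which rearranges to $X_2(E_{y_1+m-2})>y_2+n$; and (iv) $c^{>n}\ngesquare 0^{>n}$ is equivalent to the minimal sink entry $c_{n+1}<0$, i.e.\ $X_2(E_{y_1})-y_2-1\leq -1$, giving $X_2(E_{y_1})\leq y_2$.

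This argument is essentially a bookkeeping exercise, so I do not anticipate a genuine obstacle; the one point requiring care is the direction of each strict-versus-weak inequality when translating between $c_i\geq m$ and $X_1(N_{y_2+n-1})>y_1+m$, since the shift by $-y_1-1$ converts the weak bound $c_n\geq m$ into the strict bound $X_1(N_{y_2+n-1})>y_1+m$. Keeping the offset $-1$ from Definition~\ref{def:frame_two} straight, and invoking the sortedness from Lemma~\ref{lem:frame-implies-compact-range-assumption} to reduce each componentwise condition to its extreme index, are the two things that make the proof go through cleanly.
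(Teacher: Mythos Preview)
Your proposal is correct and is essentially the same argument as the paper's own proof, only spelled out in more detail: the paper simply notes that sortedness reduces (i)--(iv) to the single inequalities $c_n>m-1$, $c_1<0$, $c_{n+m-1}>n-1$, $c_{n+1}<0$, and then remarks that the stated path-wise conditions are exactly these after substituting the formulas from Definition~\ref{def:frame_two}. Your explicit handling of the strict-versus-weak inequality and the $-1$ offset is precisely what the paper leaves implicit.
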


\begin{proof}
  Since the configuration $c=\myframeconfiguration{u''}{\ell'}{y}$
  is sorted, the four equivalences are
  respectively equivalent to $c_n>m-1$, $c_1<0$, $c_{n+m-1}>n-1$ and
  $c_{n+1}<0$. The statements in (i)--(iv) give path-wise interpretations of these (simpler) inequalities.
\end{proof}

To complete the description of the algorithm in terms of a moving
frame, it remains to show that the iterates of the operator
$\varphi$ visit all stable intersections. The following lemma
shows that stable intersections of bi-infinite paths are exactly
the bottom-left corner of frames defining stable sorted
configurations.

\begin{Lem}
  Suppose that $(u'',\ell',y)\in B_{m-1,n-1}\times B_{m-1,n}\times \mathbb{Z}^2$.
  Then the configuration $\myframeconfiguration{u''}{\ell'}{y}$ is
  stable if and only if $y$ is a stable intersection of $((Nu'')^\mathbb{Z},(E\ell')^\mathbb{Z})$.
\end{Lem}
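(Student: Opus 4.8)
The plan is to translate both sides of the claimed equivalence into one common set of four inequalities in the integer coordinates $X_1(N_{y_2})$, $X_1(N_{y_2+n-1})$, $X_2(E_{y_1})$, $X_2(E_{y_1+m-2})$ of the canonically labelled steps, and then observe that the two conditions coincide. For the stability side I would first invoke Lemma~\ref{lem:frame-implies-compact-range-assumption}: the configuration $c=\myframeconfiguration{u''}{\ell'}{y}$ is sorted, so its non-sink part $c^{\leq n}$ and its sink part $c^{>n}$ are each weakly increasing. Since on $K_{m,n}$ the non-sink vertices have degree $m$ and the (non-sink) vertices of the sink component have degree $n$, stability is equivalent to the four inequalities on the extreme entries $c_1\geq 0$, $c_n\leq m-1$, $c_{n+1}\geq 0$, $c_{n+m-1}\leq n-1$. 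These are exactly the negations of the four instability conditions of Lemma~\ref{lem:bounds-on-stable-configurations}, so $c$ is stable if and only if
\begin{equation*}
X_1(N_{y_2})>y_1,\quad X_1(N_{y_2+n-1})\leq y_1+m,\quad X_2(E_{y_1})>y_2,\quad X_2(E_{y_1+m-2})\leq y_2+n .
\end{equation*}

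Next I would characterise a stable intersection geometrically. Both bi-infinite paths are monotone (only $N$ and $E$ steps), so the green path $(E\ell')^{\ZZ}$ visits each height along a single horizontal segment and the red path $(Nu'')^{\ZZ}$ visits each abscissa along a single vertical segment. The green path enters height $y_2$ at $(X_1(N_{y_2-1}),y_2)$ and turns north at height $y_2$ only at $(X_1(N_{y_2}),y_2)$; hence it passes through $y$ and is followed by an east step precisely when $X_1(N_{y_2-1})\leq y_1\leq X_1(N_{y_2})-1$. Symmetrically, the red path passes through $y$ and is followed by a north step precisely when $X_2(E_{y_1-1})\leq y_2\leq X_2(E_{y_1})-1$. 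Each of these already forces $y$ onto the corresponding path, so their conjunction is exactly the definition of a stable intersection.

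The heart of the argument is to see that these two descriptions agree, and this is where the slightly different periods of the two paths are used. The green period $E\ell'$ is an $(m,n)$-binomial path, so a one-period translation gives $X_1(N_{h+n})=X_1(N_h)+m$; taking $h=y_2-1$ turns the stability bound $X_1(N_{y_2+n-1})\leq y_1+m$ into exactly $X_1(N_{y_2-1})\leq y_1$. Likewise the red period $Nu''$ is an $(m-1,n)$-binomial path, giving $X_2(E_{a+m-1})=X_2(E_a)+n$, so with $a=y_1-1$ the bound $X_2(E_{y_1+m-2})\leq y_2+n$ becomes $X_2(E_{y_1-1})\leq y_2$. The two remaining stability inequalities $X_1(N_{y_2})>y_1$ and $X_2(E_{y_1})>y_2$ are literally the upper ends of the two interval conditions. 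Thus the four stability inequalities are precisely the pair of interval conditions defining a stable intersection, and the equivalence follows.

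I expect the main difficulty to be bookkeeping rather than conceptual: one must pin down, via monotonicity, which labelled step governs the \emph{entry} end of each segment, namely $N_{y_2-1}$ and $E_{y_1-1}$, which are not among the frame labels, and then check that shifting by exactly one period (that is, $n$ in the $N$-labels and $m-1$ in the $E$-labels) is what reconciles these entry steps with the frame's extreme labels $N_{y_2+n-1}$ and $E_{y_1+m-2}$. The degenerate cases (consecutive north steps in the green path, or consecutive east steps in the red path) should be verified to confirm that the relevant interval condition correctly becomes empty there, but these present no genuine obstacle.
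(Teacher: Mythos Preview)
Your proposal is correct and follows essentially the same approach as the paper: both arguments use Lemma~\ref{lem:bounds-on-stable-configurations} to translate stability into four coordinate inequalities, characterise the stable-intersection condition by the two interval constraints $X_1(N_{y_2-1})\leq y_1 < X_1(N_{y_2})$ and $X_2(E_{y_1-1})\leq y_2 < X_2(E_{y_1})$, and then use the periodicity relations $X_1(N_{y_2-1})=X_1(N_{y_2+n-1})-m$ and $X_2(E_{y_1-1})=X_2(E_{y_1+m-2})-n$ to match them. The only cosmetic difference is that you argue the equivalence in one pass via a common set of inequalities, while the paper treats the two implications separately.
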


\begin{proof}
Let $c = (c_1,\ldots,c_{n+m-1}) = \myframeconfiguration{u''}{\ell'}{y}$ and $y=(y_1,y_2)$.
\begin{itemize}
\item If the configuration $c$ is stable, it means that $0 \lesquare c \lesquare \delta$.
Let us first consider the steps of the path $(E\ell')^\mathbb{Z}$.
Since $0 \leq c_1 \leq c_n \leq m-1$ we deduce from
Lemma~\ref{lem:bounds-on-stable-configurations} that $$X_1(N_{y_2}) >
y_1 \mbox{ and } X_1(N_{y_2+n-1}) \leq y_1+m.$$
Since the periodic
pattern $E\ell'$ contains $m$ east steps and $n$ north steps, we have
$$X_1(N_{y_2-1}) = X_1(N_{y_2+n-1})-m.$$
From these two observations we have
$$X_1(N_{y_2-1}) \leq y_1 < X_1(N_{y_2}).$$
These inequalities imply that $y$ is a vertex of
$(E\ell')^\mathbb{Z}$ and the strict inequality implies that $y$ is
followed by an east step.
A similar discussion for the path $(Nu'')^\mathbb{Z}$ leads to the similar inequalities:
$$ X_2(E_{y_1-1}) \leq y_2 < X_2(E_{y_1}).$$
This shows that $y$ also belongs to $(Nu'')^\mathbb{Z}$ and it is followed by a north step.
Therefore $y$ is a stable intersection in $((Nu'')^\mathbb{Z},(E\ell')^\mathbb{Z})$.
\item
Conversely, we assume that $y$ is a stable intersection.
Since $y$ belongs to $(E\ell')^\mathbb{Z}$, we have the
inequalities
$$X_1(N_{y_2-1}) \leq y_1 < X_1(N_{y_2})$$
where the strict equality comes from the east step following $y$.
Since $X_1(N_{y_2-1+n}) = X_1(N_{y_2-1})+m$ we have
$$X_1(N_{y_2+n-1}) \leq y_1+m.$$
Using Lemma~\ref{lem:bounds-on-stable-configurations} we deduce that
$0^{\leq n}\lesquare c^{\leq n} \lesquare \delta^{\leq n}.$
Since $y$ belongs to $(Nu'')^\mathbb{Z}$ we may deduce, in a similar
manner, that $0^{>n} \lesquare c^{>n} \lesquare \delta^{>n}$.
Thus $c$ is a stable configuration.\qedhere
\end{itemize}
\end{proof}

We can now finally state and prove that the computation of $\varphi$ may be
interpreted in term of a moving frame as a jump from a stable
intersection to the preceding stable intersection, if such an intersection exists.

\begin{Thm}\label{thm55}
Let $(u'',\ell')\in B_{m-1,m-1}\times B_{m-1,n}$ and let $y$ be a
stable intersection of the pair
$\left((Nu'')^\mathbb{Z},(E\ell')^\mathbb{Z}\right)$. Let $x$ be
the next stable intersection of
$((Nu'')^\mathbb{Z},(E\ell')^\mathbb{Z})$ if such an intersection
exists, and $x=y$ otherwise. Then
$$\varphi\left(\myframeconfiguration{u''}{\ell'}{y}\right) =
\myframeconfiguration{u''}{\ell'}{x}.$$
\end{Thm}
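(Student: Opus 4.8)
The plan is to execute Algorithm~\ref{alg:phi} on the configuration $c=\myframeconfiguration{u''}{\ell'}{y}$ and to read off every line of its run as a displacement of the frame. First I would record that $c$ is a legitimate input: by the lemma immediately preceding this theorem, $\myframeconfiguration{u''}{\ell'}{y}$ is stable precisely because $y$ is a stable intersection, so $\varphi(c)$ is exactly the output of Algorithm~\ref{alg:phi}. By Lemma~\ref{lem:toppling-and-frame}, applying $T^{\leq n}$ (respectively $T^{>n}$) to $\myframeconfiguration{u''}{\ell'}{(p_1,p_2)}$ yields $\myframeconfiguration{u''}{\ell'}{(p_1,p_2-1)}$ (respectively $\myframeconfiguration{u''}{\ell'}{(p_1-1,p_2)}$). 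Hence throughout the algorithm the working configuration $c'$ equals $\myframeconfiguration{u''}{\ell'}{p}$ for a frame position $p$ that only ever moves one unit south or one unit west. The initialisation $c'\gets T^{\leq n}\cdot T^{>n}(c)$ therefore moves the frame from $y=(y_1,y_2)$ to $(y_1-1,y_2-1)$, stepping strictly off the stable intersection $y$ so that the loop is forced to search for a different one.

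Next I would turn the two branch conditions of the while loop into geometric statements. Since $c'$ is sorted (Lemma~\ref{lem:frame-implies-compact-range-assumption}), Lemma~\ref{lem:bounds-on-stable-configurations} rewrites each of the four tests as an explicit inequality comparing the current frame point $p$ to the abscissa or ordinate of a single frame step on the red or green path. Combining all four, the loop condition $\mathrm{not}(0\lesquare c'\lesquare\delta)$ is exactly the assertion that $p$ is not a stable intersection (the content of the preceding lemma), and whenever this holds at least one of the two branch conditions must be satisfied. Consequently every loop iteration decreases $p$ in at least one coordinate, so the frame traces a monotone staircase of unit south/west steps, and the loop halts exactly when $p$ first lands on a stable intersection.

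The geometric heart of the argument is then to show that this particular staircase passes through the next stable intersection $x$ and stops there. I would first observe that $x$ lies strictly to the southwest of $y$: if two stable intersections shared an abscissa then, since both are followed by an east step of the monotone green path $(E\ell')^{\ZZ}$, that path would have to revisit the same abscissa at two different heights, which is impossible; the symmetric argument with $(Nu'')^{\ZZ}$ rules out a shared ordinate. Thus $x_1\le y_1-1$ and $x_2\le y_2-1$, so $x$ is reachable from the starting point $(y_1-1,y_2-1)$ by south/west steps. It then remains to verify that the inequalities of Lemma~\ref{lem:bounds-on-stable-configurations} steer the staircase so that the \emph{first} stable intersection it meets is $x$: the conditions keep the frame pinned in the corridor between the red path (to its south) and the green path (to its west), and this corridor pinches to a single lattice point exactly at each stable intersection, forcing the staircase through $x$ before any stable intersection further to the southwest can be reached.

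I expect this last verification to be the main obstacle: it requires a careful case analysis of the four inequalities to show that at every non-stable frame point the triggered move keeps the frame inside the corridor, so that the staircase cannot by-pass the pinch point $x$. Finally I would dispose of the degenerate case in which no stable intersection lies to the southwest of $y$: by our Cyclic Lemma (Lemma~\ref{theo:cyclic_lemma}) there are only $m$ stable intersections in all, so when $y$ is the southwestmost (parking) one the staircase never meets a stable intersection, the counter reaches $nloops\geq m+n$, and the algorithm returns the original $c=\myframeconfiguration{u''}{\ell'}{y}$, in agreement with the convention $x=y$. The threshold $m+n$ comfortably exceeds the number of south/west steps separating two consecutive stable intersections, so the guard never fires prematurely when $x$ exists.
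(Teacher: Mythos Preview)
Your plan diverges from the paper's proof at exactly the point you flag as the main obstacle, and the paper's route sidesteps that obstacle entirely.  The paper does \emph{not} attempt to trace the staircase executed by Algorithm~\ref{alg:phi} and argue that it is geometrically funneled through $x$.  Instead it argues as follows: since $\varphi(c)=c-\Delta_A$ with $A$ of the form given by Proposition~\ref{prop:two-parameter-description-of-A}, one has $\varphi(c)=(T^{\leq n})^{k+1}(T^{>n})^{l+1}(c)$, hence $\varphi(c)=\myframeconfiguration{u''}{\ell'}{z^{(i)}}$ for \emph{some} stable intersection $z^{(i)}$ southwest of $y=z^{(j)}$.  The identification $i=j+1$ is then obtained by contradiction from the minimality of $|A|$ in Definition~\ref{def_phi}: if $i>j+1$, the intermediate stable intersection $z^{(j+1)}$ gives a strictly smaller admissible set $A'\subsetneq A$ with $c-\Delta_{A'}$ stable.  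No step-by-step tracking of the loop, no corridor argument, no case analysis of the four inequalities in Lemma~\ref{lem:bounds-on-stable-configurations} is needed.

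Your approach is not obviously wrong, but the corridor argument you defer is a genuine gap, and your description of it is imprecise (which path bounds the frame on which side, and why the two branch conditions jointly prevent the staircase from exiting the corridor at a non-pinch point, is not spelled out).  You also silently assume that the output of Algorithm~\ref{alg:phi} is $\varphi(c)$; the paper never proves this independently of Theorem~\ref{thm55}, so you cannot simply invoke it.  What the paper actually uses is weaker---only that $\varphi(c)$ is reached from $c$ by \emph{some} composition of $T^{\leq n}$ and $T^{>n}$, which follows directly from the shape of $A$ in Proposition~\ref{prop:two-parameter-description-of-A}---and then the minimality of $A$ does all the remaining work.  I would recommend replacing your corridor plan with this minimality-of-$A$ contradiction; it is both shorter and avoids the case analysis you anticipate.
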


\begin{proof}
Let
$\left\{z^{(k)}=\left(z_1^{(k)},z_2^{(k)}\right)\right\}_{0\leq
k<m}$ be the collection of $m$ stable intersections of the pair
$((Nu'')^\mathbb{Z},(E\ell')^\mathbb{Z})$ where $z^{(m-1)}_1<
\ldots < z^{(0)}_1$ and $z^{(m-1)}_2< \ldots < z^{(0)}_2$.
Consider the stable configuration of some stable intersection
$z^{(j)}$: $$c=\myframeconfiguration{u''}{\ell'}{z^{(j)}}.$$ Since
the operator $\varphi$ may be interpreted as a sequence of
applications of $T^{\leq n}$ and $T^{>n}$ corresponding to unit
steps to the south or west, the resulting stable configuration
$c'=\varphi(c)$ is defined by a stable intersection $z^{(i)}$
where $j\leq i$:
 $$c'=\myframeconfiguration{u''}{\ell'}{z^{(i)}}.$$
 If $j=m-1$, then $i=m-1$ and we arrive at the $x=y$ case of the statement.

Otherwise it remains to show that $i=j+1$. We obtain this fact by
a contradiction that involves the minimality of the cardinality of
$A$ in the definition of $\varphi$. Assume that $i>j+1$ and
consider the configuration related to the stable intersection
$z^{(j+1)}$:
$$ c'' = \myframeconfiguration{u''}{\ell'}{z^{(j+1)}}.$$
From the definition of $\varphi(c)$ in our algorithm and Proposition~\ref{prop:two-parameter-description-of-A}, we have
$$A=\{v_{n-k},\ldots,v_n \}\cup\{v_{m+n-1-l},\ldots ,v_{n+m-1}\}$$
where $k=z^{(j)}_2-z^{(i)}_2$ and $l=z^{(j)}_1-z^{(i)}_1$.

However, from Lemma~\ref{lem:toppling-and-frame}, we have
$$c''=(T^{\leq n})^{z^{(j)}_2-z^{(j+1)}_2}\cdot (T^{>n})^{z^{(j)}_1-z^{(j+1)}_2}(c).$$
Since $c''$ is a stable configuration we deduce that the set
$$A'=\{v_{n-k'},\ldots,v_n \}\cup\{v_{m+n-1-l'},\ldots ,v_{n+m-1}\} $$
where $k'=z^{(j)}_2-z^{(j+1)}_2>0$ and
$l'=z^{(j)}_1-z^{(j+1)}_1>0$, is non-empty and thus also a
candidate for the definition of $\varphi(c).$ This means that
$k'<k$ and $l'<l$ so that $A'\neq A$ and $A'\subset A$, and we
arrive at a contradiction to the minimality of $A$ as claimed in
the definition of $\varphi$. Therefore $\varphi(c)=c''$ and we are
done.
\end{proof}

We conclude this subsection by a proposition formalizing the
description of all sorted configurations satisfying the compact range assumption in a
given toppling and permuting equivalence class. We do not need this
general proposition to bring us forward, but deem it worthy of a mention.

\begin{Prop}
  Let $(u'',l')$ be fixed in $B_{m-1,m-1}\times B_{m-1,n}$.  The map
  $P_{u'',l'}:y\mapsto \myframeconfiguration{u''}{l'}{y}$ is a bijection
  between $\mathbb{Z}^2$ and sorted configurations which satisfy the
  compact range assumption and are toppling and permuting equivalent to
  $\myframeconfiguration{u''}{l'}{(0,0)}$.
\end{Prop}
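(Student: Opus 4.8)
The plan is to verify three things: that $P_{u'',\ell'}$ maps $\ZZ^2$ into the stated target set, that it is injective, and that it is surjective onto that set. The first is immediate from the earlier lemmas: Lemma~\ref{lem:frame-implies-compact-range-assumption} shows every $\myframeconfiguration{u''}{\ell'}{y}$ is sorted and satisfies the compact range assumption, while the lemma asserting that all $\myframeconfiguration{u''}{\ell'}{x}$ (for fixed $(u'',\ell')$) are toppling and permuting equivalent places each in the class of $\myframeconfiguration{u''}{\ell'}{(0,0)}$. I would also record at the outset, using Lemma~\ref{lem:toppling-and-frame} and the invertibility of $T^{\leq n},T^{>n}$ on sorted compact-range configurations (their explicit formulas in Lemma~\ref{lem:operator-on-compact-range} are manifestly invertible), that $\myframeconfiguration{u''}{\ell'}{y}=(T^{\leq n})^{-y_2}(T^{>n})^{-y_1}\myframeconfiguration{u''}{\ell'}{(0,0)}$, so the image of $P_{u'',\ell'}$ is exactly the orbit of $\myframeconfiguration{u''}{\ell'}{(0,0)}$ under the group generated by $T^{\leq n}$, $T^{>n}$ and their inverses.

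For injectivity I would introduce two linear statistics, the total non-sink height $h(c)=\sum_{i=1}^{n+m-1}c_i$ and the sink-component height $s(c)=\sum_{i=n+1}^{n+m-1}c_i$. From the explicit formulas of Lemma~\ref{lem:operator-on-compact-range} one reads off that $T^{\leq n}$ changes $(h,s)$ by $(-1,\,m-1)$ and $T^{>n}$ changes it by $(0,-n)$. Combined with the orbit description above this yields the affine formulas $h(\myframeconfiguration{u''}{\ell'}{y})=h_0+y_2$ and $s(\myframeconfiguration{u''}{\ell'}{y})=s_0+n\,y_1-(m-1)y_2$, whose linear part has determinant $-n\neq 0$; hence $y\mapsto(h,s)$ is injective on the image and so $P_{u'',\ell'}$ is injective.

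For surjectivity the plan is to show the target set is a single orbit, namely the image. The key structural fact I would prove is that \emph{every} sorted configuration satisfying the compact range assumption is a frame measurement $\myframeconfiguration{v''}{k'}{z}$ for some triple $(v'',k',z)$. This extends Lemma~\ref{lem:existence-of-frame-description} beyond the stable case: reconstructing the periodic green pattern from the sorted non-sink part requires only that the number of east steps wrapping around one period, namely $m-(c_n-c_1)$, be non-negative, which is exactly the compact range inequality $c_n-c_1\leq m$ rather than stability, and symmetrically for the red pattern. Granting this, the $T$-orbit of any target configuration equals a full frame family $\{\myframeconfiguration{v''}{k'}{z'}:z'\in\ZZ^2\}$ by Lemma~\ref{lem:toppling-and-frame}, and Theorem~\ref{thm55} together with the Cyclic Lemma~\ref{theo:cyclic_lemma} shows each such family contains exactly $m$ stable configurations, exactly one of which is recurrent. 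Since each toppling and permuting class contains a unique sorted recurrent configuration and each orbit lies inside a single class, the assignment (orbit) $\mapsto$ (its recurrent configuration) $\mapsto$ (its class) is a bijection; as distinct orbits are disjoint they cannot share a recurrent configuration, so each class meets the sorted compact-range configurations in exactly one orbit. Applying this to the class of $\myframeconfiguration{u''}{\ell'}{(0,0)}$ identifies the target with the image, giving surjectivity.

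The main obstacle is precisely this structural fact: that every sorted compact-range configuration arises as a frame measurement, equivalently that each $T$-orbit meets the stable box $0\lesquare c\lesquare\delta$. This is the place where the compact range hypothesis does the work that stability did in Lemma~\ref{lem:existence-of-frame-description}, and once it is in hand the remaining counting of recurrent representatives per orbit is bookkeeping built on Theorem~\ref{thm55} and the Cyclic Lemma. An alternative to this orbit argument would bypass the frame reconstruction by checking directly that $s+(m-1)h \bmod n$ is invariant under all topplings and permutations (hence constant on each class), which forces $(h(c),s(c))$ into the lattice swept out by the image; injectivity of $(h,s)$ on the class would then pin down the unique $y$ with $P_{u'',\ell'}(y)=c$. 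I expect the frame-reconstruction route to be cleaner, since it reuses the paper's existing machinery rather than requiring a fresh uniqueness argument within a class.
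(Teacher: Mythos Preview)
Your proposal is correct, and the injectivity argument is essentially identical to the paper's: the paper uses the pair $(I_1,I_2)=(\sum_{i\leq n+m-1}c_i,\sum_{i\leq n}c_i)$ instead of your $(h,s)$, but these are related by an invertible linear change and the nonvanishing-determinant conclusion is the same.

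For surjectivity the paper takes a somewhat more operational route than yours. Rather than extending Lemma~\ref{lem:existence-of-frame-description} to non-stable compact-range configurations (the step you flag as the ``main obstacle''), it works directly with the operators: given a sorted compact-range $u$, it writes down explicit integer powers of $T^{\leq n}$ and $T^{>n}$ that first make $u$ non-negative (essentially repeated sink-topplings), then topples down to a stable configuration, and finally applies $\varphi^m$ to reach the sorted recurrent $r$ of the class. Uniqueness of $r$ then forces $u$ into the $T$-orbit of $\myframeconfiguration{u''}{\ell'}{(0,0)}$, and Lemma~\ref{lem:toppling-and-frame} converts this to $u=\myframeconfiguration{u''}{\ell'}{y}$ for an explicit $y$. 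So both arguments hinge on the same fact---each $T$-orbit contains exactly one sorted recurrent, hence $T$-orbits coincide with toppling-and-permuting classes restricted to sorted compact-range configurations---but the paper reaches a stable representative by brute-force toppling, while you reach it by directly building a frame description at a suitably shifted basepoint. Your route is a bit more conceptual and reuses the frame machinery; the paper's route avoids the (mild) care needed to choose $z\neq(0,0)$ when the configuration falls outside $0\lesquare c\lesquare\delta$, at the price of a slightly fiddly explicit formula.
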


\begin{proof}
To show the surjectivity of $P_{u'',l'}$, we consider $u$ to be a sorted configuration satisfying
 the compact range assumption.  We start by showing that there exists a sorted \emph{stable}
  configuration $v$ such that
  $v=\left(T^{\leq n}\right)^{\alpha}\cdot
  \left(T^{>n}\right)^{\beta}(u)$  for $(\alpha,\beta)\in\mathbb{Z}^2$.
To do this, we first check that $$w=\left[\left(T^{\leq n}\right)^{-n}\cdot
  \left(T^{>n}\right)^{1-m}\right]^{-u_1}\cdot\left\{\left[\left(T^{\leq n}\right)^{-n}\cdot
  \left(T^{>n}\right)^{1-m}\right]^{m}\cdot \left(T^{\leq n}\right)^n\right\}^{\lceil-
\frac{u_{n+1}}{n}\rceil}(u)$$
 is a non-negative configuration since $\left[\left(T^{\leq n}\right)^{-n}\cdot
  \left(T^{>n}\right)^{1-m}\right]^{-u_1}$ corresponds to $-u_1$ topplings of the sink, 
adding exactly $-u_1$ grains to each vertex of $u^{\leq n}$. In the same way, the 
remaining term corresponds to the addition of $n\lceil -\frac{u_{n+1}}{n}\rceil$ grains to each 
vertex of $u^{>n}$.
Then, we may topple in this non-negative configuration $w$ the unstable vertices of maximal 
value in each component, and obtain for $(\gamma,\delta)\in\mathbb{N}^2$, the stable 
configuration 
$$v = \left( T^{\leq n} \right)^{\gamma}\cdot \left( T^{>n} \right)^{\delta} w=\left(T^{\leq n}\right)^{\alpha}\cdot  \left(T^{>n}\right)^{\beta}(u)$$
with $(\alpha,\beta)\in\mathbb{Z}^2$.
  
Now, we apply $\phi^m$ (which is also a combination of operators $T^{\leq n}$ and $T^{>n}$)
 to this $v$. We get the (unique) sorted recurrent configuration $r$ of the toppling class. 
 Hence, any sorted configuration satisfying the compact range assumption is related to the
 sorted recurrent configuration $r$ via the operators $T^{\leq n}$ and $T^{>n}$,
and these operators are invertible when restricted to configurations satisfying the compact range assumption.
Since $r$ is unique, this implies that $u$ and $\myframeconfiguration{u''}{l'}{(0,0)}$ are 
toppling and permuting equivalent and that there exists $(\alpha',\beta')\in \mathbb{Z}^2$ such that:
$$ u = \left(T^{\leq n}\right)^{\alpha'}\cdot
  \left(T^{>n}\right)^{\beta'}(\myframeconfiguration{u''}{l'}{(0,0)})=
\myframeconfiguration{u''}{l'}{y}$$
with $y=(-\beta',-\alpha')$.
This proves the surjectivity of $P_{u'',l'}$.

To prove the injectivity of $P_{u'',l'}$, we define the two following parameters on a configuration $u$:
$$I_1(u)=\sum_{i=1}^{n+m-1} u_i\mbox{ and  }I_2(u)=\sum_{i=1}^{n} u_i.$$
The relations 
$$I_1(T^{\leq n}(u))=I_1(u)-1,\ I_2(T^{\leq n}(u))=I_2(u)-n,\ I_1(T^{> n}(u))=I_1(u)
\mbox{ and } I_2(T^{> n}(u))=I_2(u)+m  $$
show that all $\myframeconfiguration{u''}{l'}{y}$ are distinct when $y$ 
runs over $\mathbb{Z}^2$.
\end{proof}

\subsection{The operator $\psi$ on $K_{m,n}$}

We can give a similar pictorial description for the action of the
operator $\psi$ on sorted stable configurations.

In Proposition \ref{prop:conjugate} the operators $\varphi$ and
$\psi$ were shown to be conjugate. This conjugation used the
involution $\beta$ which sends a configuration $c$ to the
configuration $\delta-c$. To deal with sorted configurations, let
us denote by $\rho$ the element of $S_n\times S_k$ which reverses
the order of entries of both the sink and the
non-sink parts of a configuration.

Now we may write for every sorted configuration $c$ on $K_{m,n}$:
$$ \psi(c) =   \rho\cdot \beta\cdot \varphi\cdot \rho\cdot \beta(c).$$
In this way, we may compute the action of $\psi$ on sorted
configurations through the action of $\varphi$ on the same set.

\begin{Thm}\label{thm61}
Let $(u'',\ell')\in B_{m-1,n-1}\times B_{m-1,n}$ and let $y$ be a
stable intersection of the pair
$\left((Nu'')^\mathbb{Z},(E\ell')^\mathbb{Z}\right)$. Let $x$ be
the preceding stable intersection of
$((Nu'')^\mathbb{Z},(E\ell')^\mathbb{Z})$ if such an intersection
exists, and $x=y$ otherwise. Then
$$\psi\left(\myframeconfiguration{u''}{\ell'}{y}\right) =
\myframeconfiguration{u''}{\ell'}{x}.$$
\end{Thm}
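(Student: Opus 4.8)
The plan is to exploit the conjugation relation $\psi = \rho\beta\varphi\rho\beta$ displayed just before the statement (a consequence of Proposition~\ref{prop:conjugate}) and to reduce everything to Theorem~\ref{thm55}. Write $c=\myframeconfiguration{u''}{\ell'}{y}$, let $z^{(0)},\dots,z^{(m-1)}$ be the $m$ stable intersections of $((Nu'')^\mathbb{Z},(E\ell')^\mathbb{Z})$ ordered as in the proof of Theorem~\ref{thm55} (so $z^{(0)}$ is the most north-east and $z^{(m-1)}$ the most south-west), and set $c^{(k)}=\myframeconfiguration{u''}{\ell'}{z^{(k)}}$ with $y=z^{(k)}$. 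Since $z^{(k-1)}$ is precisely the preceding stable intersection, the goal reduces to showing $\psi(c^{(k)})=c^{(k-1)}$ for $k\ge 1$ and $\psi(c^{(0)})=c^{(0)}$.

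First I would record the relevant facts about $\rho\beta$ on sorted stable configurations. As $\rho$ and $\beta$ commute (both respect the two components, and $\delta$ is constant on each), $(\rho\beta)^2=\rho^2\beta^2=\mathrm{id}$, so $\rho\beta$ is an involution. By the identity $\beta(c+\Delta_A)=\beta(c)-\Delta_A$ from Proposition~\ref{prop:conjugate}, $\beta$ carries each class onto a class, and $\rho$ preserves toppling-and-permuting classes, so $\rho\beta$ sends the common class of the $c^{(k)}$ onto a single \emph{dual} class. Moreover $\beta$, hence $\rho\beta$, preserves the compact range assumption (complementation leaves $\max-\min$ unchanged in each component), so by the final proposition of this section the images $\rho\beta(c^{(k)})$ are frame configurations $\hat c^{(j)}=\myframeconfiguration{\hat u''}{\hat\ell'}{\hat z^{(j)}}$ of the single dual pair $(\hat u'',\hat\ell')$ attached to the dual class; being stable, they sit at the $m$ stable intersections $\hat z^{(j)}$, by the lemma identifying stable configurations with stable intersections. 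Thus $\rho\beta$ restricts to a bijection $\{c^{(k)}\}\to\{\hat c^{(j)}\}$.

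The crucial point is to pin this bijection down as the order-reversing one, $\rho\beta(c^{(k)})=\hat c^{(m-1-k)}$, and I would do this with the order $<_2$. Within each picture the frame configurations at stable intersections are strictly $<_2$-increasing in the index: indeed $\varphi(c^{(j)})=c^{(j+1)}$ by Theorem~\ref{thm55}, and $\varphi$ strictly increases $<_2$ on non-parking configurations, so $c^{(0)}<_2\cdots<_2 c^{(m-1)}$ and likewise $\hat c^{(0)}<_2\cdots<_2\hat c^{(m-1)}$. On $K_{m,n}$ the non-sink part of the distance distribution is just $(d_1,d_2)=(\sum_{i\le n}c_i,\ \sum_{n<i<n+m}c_i)$; the reversal $\rho$ leaves these sums unchanged, whereas $\beta$ sends $(d_1,d_2)$ to $(n(m-1)-d_1,\ (m-1)(n-1)-d_2)$, an affine negation, which reverses the lexicographic order. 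Hence $\rho\beta$ strictly reverses $<_2$, and matching the increasing chain $c^{(0)}<_2\cdots<_2 c^{(m-1)}$ through the order-reversing involution $\rho\beta$ against the increasing chain $\hat c^{(0)}<_2\cdots<_2\hat c^{(m-1)}$ forces $\rho\beta(c^{(k)})=\hat c^{(m-1-k)}$, equivalently $\hat c^{(j)}=\rho\beta(c^{(m-1-j)})$.

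It then remains to assemble the pieces. For $1\le k\le m-1$, Theorem~\ref{thm55} applied in the dual picture gives $\varphi(\hat c^{(m-1-k)})=\hat c^{(m-k)}$, and using $\hat c^{(m-k)}=\rho\beta(c^{(k-1)})$ together with $(\rho\beta)^2=\mathrm{id}$ yields
$$\psi(c^{(k)})=\rho\beta\,\varphi\,\rho\beta(c^{(k)})=\rho\beta\,\varphi(\hat c^{(m-1-k)})=\rho\beta(\hat c^{(m-k)})=c^{(k-1)}.$$
For $k=0$ the intermediate configuration $\rho\beta(c^{(0)})=\hat c^{(m-1)}$ is the $\varphi$-fixed (parking) element of the dual picture, so $\varphi$ fixes it and the same chain gives $\psi(c^{(0)})=c^{(0)}$, matching the $x=y$ clause. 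The main obstacle I anticipate lies in the second and third paragraphs: showing that the whole family $\{\rho\beta(c^{(k)})\}$ lands in one dual picture and that the induced bijection reverses the index. Once that order-reversal is secured, the rest is the quoted Theorem~\ref{thm55}, Proposition~\ref{prop:conjugate}, and the short computation that $\rho\beta$ is an order-reversing involution.
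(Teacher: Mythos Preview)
Your proof is correct and follows the same overall architecture as the paper's: both reduce to Theorem~\ref{thm55} via the conjugation $\psi=\rho\beta\,\varphi\,\rho\beta$, and both need to identify how $\rho\beta$ matches the stable intersections of the original picture with those of a dual picture. The difference lies in how this matching is established.

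The paper proves an explicit lemma: $\rho\beta\,\myframeconfiguration{u''}{\ell'}{(y_1,y_2)} = \myframeconfiguration{\rho(u'')}{\rho(\ell')}{(-y_1,-y_2)}$. In other words, it computes the dual pair of words and the dual point concretely, by reversing the words and negating the coordinates. From this one reads off immediately that $y$ is a stable intersection of the original pair iff $-y$ is a stable intersection of the dual pair, and that ``preceding'' in one picture corresponds to ``next'' in the other, so Theorem~\ref{thm55} applies directly. Your route is more abstract: you do not identify $(\hat u'',\hat\ell')$ explicitly, but instead argue that $\rho\beta$ sends the $m$ sorted stable configurations of the class to the $m$ sorted stable configurations of the dual class (using the proposition at the end of Subsection~\ref{subsec55}), and you pin down the bijection as order-reversing by observing that $\rho\beta$ reverses $<_2$ while $\varphi$ strictly increases $<_2$ along the chain of stable intersections. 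This is a clean shortcut that bypasses the word-level computation; the cost is that you need the proposition describing all compact-range configurations in a class as frame configurations (which the paper marks as optional), and you obtain less information, since the paper's explicit formula for $\rho\beta$ is of independent interest.
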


Given a binomial word $u=u_1u_2\ldots u_{k-1}u_{k}$ written as $k$
letters, define the reverse of $u$ to be $\rho(u) =
u_ku_{k-1}\ldots u_2u_1$. The proof of the previous theorem relies
on the following lemma.

\begin{Lem}
For any triple $(u'',\ell',y)\in B_{m-1,n-1}\times B_{m-1,n}
\times  \mathbb{Z}^2$ we have
$$ \rho\cdot\beta\cdot\myframeconfiguration{u''}{\ell'}{(y_1,y_2)} = \myframeconfiguration{\rho(u'')}{\rho(\ell')}{(-y_1,-y_2)}.$$
\end{Lem}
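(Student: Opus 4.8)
The plan is to prove the identity by unwinding both sides into the explicit coordinate definitions of Definition~\ref{def:frame_two} and the explicit effect of the involution $\rho\cdot\beta$, then checking that the two descriptions of the same bi-infinite geometry agree entry-by-entry. The essential geometric idea is that reversing a binomial word and negating the anchor point corresponds to a point-reflection through the origin: the path $(Nu'')^{\ZZ}$ with its steps read in the usual orientation becomes, after reflection through $(0,0)$, the path $(N\rho(u''))^{\ZZ}$ read in the reverse orientation, and likewise for the green path. So the right-hand frame measurement of $(\rho(u''),\rho(\ell'))$ at $(-y_1,-y_2)$ should literally be the reflection of the left-hand data, and applying $\rho\cdot\beta$ is precisely the algebraic shadow of that reflection.

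Concretely, I would first record what $\rho\cdot\beta$ does to a measurement. By Definition~\ref{def:frame_two}, $\myframeconfiguration{u''}{\ell'}{y}=(a_0,\ldots,a_{n-1},b_0,\ldots,b_{m-2})$ with $a_i=X_1(N_{y_2+i})-y_1-1$ and $b_j=X_2(E_{y_1+j})-y_2-1$. The involution $\beta$ sends the non-sink entry $a_i$ to $(m-1)-a_i$ and the sink entry $b_j$ to $(n-1)-b_j$ (using $\delta^{\leq n}_i=m-1$, $\delta^{>n}_i=n-1$); then $\rho$ reverses each component, so the $i$-th non-sink entry of $\rho\cdot\beta\cdot\myframeconfiguration{u''}{\ell'}{y}$ becomes $(m-1)-a_{n-1-i}$ and the $j$-th sink entry becomes $(n-1)-b_{m-2-j}$. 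The goal is to show these equal the entries $a'_i=X_1(N^{\rho}_{-y_2+i})-(-y_1)-1$ and $b'_j=X_2(E^{\rho}_{-y_1+j})-(-y_2)-1$ of the reflected frame measurement $\myframeconfiguration{\rho(u'')}{\rho(\ell')}{(-y_1,-y_2)}$.

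The heart of the argument is a bookkeeping lemma about how the canonical step-labels transform under the reflection $(x_1,x_2)\mapsto(-x_1,-x_2)$. Reflecting the periodic green path $(E\ell')^{\ZZ}$ through the origin yields (after reversing step orientation) the path $(E\rho(\ell'))^{\ZZ}$, and a north step labelled $N_k$ in the original (whose lower endpoint has ordinate $k$) maps to a north step whose lower endpoint has ordinate $-k-1$ in the reflected path, since reflection swaps the two endpoints of the unit step. Tracking the index shift carefully, the step that was $N_{y_2+i}$ becomes, in the reflected labelling of $(E\rho(\ell'))^{\ZZ}$ anchored appropriately, the step indexed $-y_2-1+(n-1-i)+1$ or so; the precise offset is exactly what turns $(m-1)-a_{n-1-i}$ into $X_1(N^{\rho}_{-y_2+i})+y_1-1$. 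I would verify this index/coordinate correspondence once for the green north steps and once for the red east steps, invoking that $E\ell'$ has $n$ north steps and $m$ east steps (and $Nu''$ has $n$ north and $m-1$ east) so the periodicity offsets match.

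The main obstacle is getting the index arithmetic for the reflected canonical labelling exactly right, including the $\pm 1$ shifts coming from the fact that reflection through the origin interchanges the lower and upper endpoint of each unit step (hence the ``$-k-1$'' rather than ``$-k$''), and the fact that the frame's ``$-1$'' offsets in $a_i$ and $b_j$ interact with the $\delta$-complementation in $\beta$. Rather than grinding the full index calculation symbolically, I expect the cleanest route is to fix the anchor as a stable intersection, reduce to the case $y=(0,0)$ by the translation-equivariance already used in the proof of Lemma~\ref{theo:cyclic_lemma}, and there check the correspondence directly against the definition, after which the general $y$ follows formally. The remaining verifications are the routine substitutions that I would leave to the reader in the style of Lemma~\ref{lem:existence-of-frame-description}.
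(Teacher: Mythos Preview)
Your final plan---verify the identity at $y=(0,0)$ by explicit computation, then extend to general $y$---is exactly the paper's route, so the strategy is sound. The gap is in how you propose to carry out the extension. You say the general $y$ ``follows formally'' from ``the translation-equivariance already used in the proof of Lemma~\ref{theo:cyclic_lemma}'', but that lemma's translation-equivariance relates different word pairs giving the same bi-infinite paths; it does not by itself tell you how $\rho\cdot\beta$ interacts with moving the frame. What the paper actually uses is Lemma~\ref{lem:toppling-and-frame} together with the conjugation identities
\[
(T^{\leq n})^{-1}=\rho\cdot\beta\cdot T^{\leq n}\cdot\rho\cdot\beta,
\qquad
(T^{>n})^{-1}=\rho\cdot\beta\cdot T^{>n}\cdot\rho\cdot\beta,
\]
which must be checked (from the explicit formulas of Lemma~\ref{lem:operator-on-compact-range}) and then composed with the relation $\myframeconfiguration{u''}{\ell'}{(y_1,y_2)}=(T^{\leq n})^{-y_2}(T^{>n})^{-y_1}\myframeconfiguration{u''}{\ell'}{(0,0)}$. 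Without these conjugation relations the reduction to $y=(0,0)$ does not close, so you should state and verify them rather than appeal to the Cyclic Lemma.

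A smaller point about your geometric heuristic: reflecting $(E\ell')^{\ZZ}$ through the origin and reversing orientation yields $(\rho(E\ell'))^{\ZZ}=(\rho(\ell')E)^{\ZZ}$, not $(E\rho(\ell'))^{\ZZ}$; these differ by a one-step shift. You anticipated ``$\pm 1$ shifts'', and indeed they are there, but they do not cancel trivially---this is another reason the paper abandons the direct index-chase and instead proves the $y=(0,0)$ case by writing $u''$ and $\ell'$ explicitly in terms of the configuration entries (as in Lemma~\ref{lem:existence-of-frame-description}) and computing $\rho(u'')$, $\rho(\ell')$ from there.
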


\begin{proof}
If $\myframeconfiguration{u''}{\ell'}{(0,0)} = (a_0,\ldots ,
a_{n-1},b_0,\ldots ,b_{m-2})$ then
\begin{align*}
\ell'&=E^{a_0-0}NE^{a_1-a_0}N\ldots E^{a_{n-1}-a_{n-2}}NE^{m-1-a_{n-1}}\\
\implies \rho(\ell')&=E^{m-1-a_{n-1}}NE^{a_{n-1}-a_{n-2}}N\ldots E^{a_1-a_0}NE^{a_0-0}\\
\noalign{and}
u''&=N^{b_0-0}EN^{b_1-b_0}E\ldots EN^{n-1-b_{m-2}}\\
\implies \rho(u'') &=N^{n-1-b_{m-2}}EN^{b_{m-2}-b_{m-3}}E\ldots
EN^{b_0-0}.
\end{align*}
From the paths for $N\rho(u'')$ and $E\rho(\ell')$ we see that
$$\myframeconfiguration{\rho(u'')}{\rho(\ell')}{(0,0)} = (m-1-a_{n-1},m-1-a_{n-2},\ldots,m-1-a_0,n-1-b_{m-2},\ldots,n-1-b_0).$$
Applying $\beta$ to this sequence gives
$$\beta \cdot \myframeconfiguration{\rho(u'')}{\rho(\ell')}{(0,0)} = (a_{n-1},a_{n-2},\ldots,a_0,b_{m-2},\ldots,b_0).$$
Finally, applying $\rho$ to this configuration gives
\begin{align*}
\lefteqn{\rho \cdot \beta \cdot \myframeconfiguration{\rho(u'')}{\rho(\ell')}{(0,0)}}\\
&= (a_0,\ldots , a_{n-1},b_0,\ldots , b_{m-2})=
\myframeconfiguration{u''}{\ell'}{(0,0)}.
\end{align*}
Hence, the claimed formula is satisfied for $y=(0,0)$

We extend it next to any $y$ as follows. Note that the operators
$T^{\leq n}$ and $T^{> n}$ when restricted to configurations
satisfying the compact range assumption have well-defined inverses
and are mutually commutative. We obtain by inspection, similar to
that given at the start of this proof, the following identities
$$ (T^{\leq n})^{-1} = \rho\cdot\beta\cdot T^{\leq n}\cdot\rho\cdot\beta \quad \mbox{ and }\quad (T^{> n})^{-1} =
\rho\cdot\beta\cdot T^{> n}\cdot\rho\cdot\beta.$$ Then the
following relation, deduced from
Lemma~\ref{lem:toppling-and-frame}, leads to the claim for any
$y\in\mathbb{Z}^2$
$$\myframeconfiguration{u''}{\ell'}{(y_1,y_2)} = (T^{\leq n})^{-y_2}\cdot(T^{> n})^{-y_1}(\myframeconfiguration{u''}{\ell'}{(0,0)}).$$
Indeed, using in addition the fact that $\rho\cdot\beta$ is an
involution, we have
\begin{align*}
\rho\cdot\beta(\myframeconfiguration{u''}{\ell'}{(y_1,y_2)}) & =  \rho\cdot\beta\cdot(T^{\leq n})^{-y_2}\cdot(T^{> n})^{-y_1}(\myframeconfiguration{u''}{\ell'}{(0,0)} \\
 & =   \rho\cdot\beta\cdot(T^{\leq n})^{-y_2}\cdot(T^{> n})^{-y_1}\cdot\rho\cdot\beta(\myframeconfiguration{\rho(u'')}{\rho(\ell')}{(0,0)})\\
 & =  (T^{\leq n})^{y_2}\cdot(T^{> n})^{y_1}(\myframeconfiguration{\rho(u'')}{\rho(\ell')}{(0,0)})\\
 & =  \myframeconfiguration{\rho(u'')}{\rho(\ell')}{(-y_1,-y_2)}.\qedhere
\end{align*}
\end{proof}

\begin{proof}(of Theorem~\ref{thm61})
We have the following equivalences:
$$\begin{array}{ll}
\multicolumn{2}{l}{\mbox{$y=(y_1,y_2)$ is a stable intersection of $((Nu'')^\mathbb{Z},(E\ell')^\mathbb{Z})$}} \\
\iff & \mbox{ $\myframeconfiguration{u''}{\ell'}{(y_1,y_2)}$ is a stable configuration}\\
\iff & \mbox{ $\myframeconfiguration{\rho(u'')}{\rho(\ell')}{(-y_1,-y_2)}$ is a stable configuration}\\
\iff & \mbox{ $(-y_1,-y_2)$ is a stable intersection of
$((N\rho(u''))^\mathbb{Z},(E\rho(\ell'))^\mathbb{Z})$},
\end{array}$$
where in the second equivalence one uses the fact that the
involution $\rho\cdot\beta$ is also an involution when restricted
to stable configurations.

According to Theorem~\ref{thm55}, we consider the $x=(x_1,x_2)$
stable intersection after $(-y_1,-y_2)$ in
$((N\rho(u''))^\mathbb{Z},(E\rho(\ell'))^\mathbb{Z})$, if any, and
$(x_1,x_2)=(-y_1,-y_2)$ otherwise.

We have
$$\begin{array}{lcl}
\psi(\myframeconfiguration{u''}{\ell'}{(y_1,y_2)})&  =  & \rho\cdot\beta\cdot\varphi\cdot\rho\cdot\beta(\myframeconfiguration{u''}{\ell'}{(y_1,y_2)}\\
\ & = & \rho\cdot\beta\cdot\varphi(\myframeconfiguration{\rho(u'')}{\rho(\ell')}{(-y_1,-y_2)})\\
\ & = & \rho\cdot\beta(\myframeconfiguration{\rho(u'')}{\rho(\ell')}{(x_1,x_2)})\\
\ & = & \myframeconfiguration{u''}{\ell'}{(-x_1,-x_2)}).
\end{array}$$
To conclude we observe that $(-x_1,-x_2)$, if different from
$(y_1,y_2)$, is the stable intersection preceding 
$(y_1,y_2)$ in $((Nu'')^\mathbb{Z},(E\ell')^\mathbb{Z})$.
\end{proof}

\subsection{Consequences of the pictorial interpretations}

An interesting consequence of Theorem \ref{thm61} is a pictorial
characterization of the sorted $K_{m,n}$-parking configurations.
\begin{Cor}
  \label{cor:parking} The sorted $K_{m,n}$-parking configurations are
  the stable ones described by a pair of periodic bi-infinite paths 
$((Nu'')^{\mathbb{Z}},(El')^{\mathbb{Z}})$ 
  and a stable intersection $(y_1,y_2)$ such that for $i=1,\ldots ,m-1$,
  the east step $E_{y_1+i}$ of $(El')^{\mathbb{Z}}$ satisfies $\mathrm{pos}(E_{y_1+i})\leq 0$.
\end{Cor}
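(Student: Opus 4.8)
The plan is to identify the sorted $K_{m,n}$-parking configurations with the fixed points of $\varphi$ (Theorem~\ref{varphithm}) and then read these off from the geometric description of $\varphi$ supplied by Theorem~\ref{thm55}. First I would recall that, by Lemma~\ref{lem:existence-of-frame-description} together with the lemma identifying stable configurations with stable intersections, every sorted stable configuration on $K_{m,n}$ has the form $\myframeconfiguration{u''}{\ell'}{y}$ for some pair $(u'',\ell')\in B_{m-1,n-1}\times B_{m-1,n}$ and some stable intersection $y=(y_1,y_2)$ of $((Nu'')^{\ZZ},(E\ell')^{\ZZ})$, and that conversely any such pair yields a sorted stable configuration. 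Hence it suffices to decide, among stable intersections $y$, which ones give a configuration fixed by $\varphi$.

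Next I would invoke Theorem~\ref{thm55}: $\varphi$ sends the frame anchored at $y$ to the frame anchored at the next stable intersection in the south-west direction, and fixes it precisely when no strictly smaller stable intersection exists. Because the relation $\pos(E_{i+m})=\pos(E_i)+1$ established in the Cyclic Lemma~\ref{theo:cyclic_lemma} forces $\pos(E_i)\to-\infty$ as $i\to-\infty$, the set of stable intersections (those east steps with $\pos=0$) has a smallest element, so the notion of \emph{minimal} stable intersection is well defined. Using the injectivity of the map $y\mapsto\myframeconfiguration{u''}{\ell'}{y}$ to distinguish distinct stable intersections, I conclude that $\myframeconfiguration{u''}{\ell'}{y}$ is a fixed point of $\varphi$, equivalently a $K_{m,n}$-parking configuration, if and only if $y$ is the minimal stable intersection. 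Theorem~\ref{thm61} provides the mirror-image statement for $\psi$, confirming that the recurrent configurations sit at the maximal stable intersection and parking at the minimal one.

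The final step is to translate ``minimal stable intersection'' into the stated inequalities. At $y$ the green path begins with the east step $E_{y_1}$, and $y$ being a stable intersection means exactly $\pos(E_{y_1})=0$. I would prove the equivalence ``$y$ minimal $\iff \pos(E_{y_1+i})\le 0$ for $i=1,\dots,m-1$'' by the arithmetic of $\pos$ modulo $m$. For the forward direction, if $\pos(E_{y_1+i})=p\ge 1$ for some such $i$, then $\pos(E_{y_1+i-pm})=0$ exhibits a stable intersection at index $y_1+i-pm<y_1$, contradicting minimality. For the converse, any other stable-intersection index $j'$ lies in a residue class modulo $m$ different from that of $y_1$ (two indices congruent mod $m$ with $\pos=0$ must coincide, since $\pos$ increases by $1$ per period), so $j'=y_1+i_0+am$ with $i_0\in\{1,\dots,m-1\}$; then $\pos(E_{y_1+i_0})=-a\le 0$ forces $a\ge 0$, whence $j'>y_1$ and $y$ is minimal.

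The main obstacle is not any single computation but keeping the bookkeeping coherent: matching the abscissa-based labelling of the green east steps used in the frame picture with the running index governed by $\pos(E_{i+m})=\pos(E_i)+1$, and pinning down the \emph{direction} — that parking is the minimal (last) stable intersection reached by iterating $\varphi$, dual to the recurrent/polyomino case, which is the maximal one characterized in part (iii) of the proof of the Cyclic Lemma by $\pos(E_{y_1+i})>0$. The one genuinely delicate point is the boundary of the inequality: since another stable intersection may fall inside the window $\{y_1+1,\dots,y_1+m-1\}$, the value $\pos(E_{y_1+i})=0$ is permitted, so the correct condition is $\le 0$ rather than $<0$.
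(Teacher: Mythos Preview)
Your proposal is correct and follows essentially the same route as the paper: both arguments reduce the characterization to ``$y$ is the minimal stable intersection'' and then translate this into the $\pos$ inequalities via the relation $\pos(E_{i+m})=\pos(E_i)+1$. The paper's proof re-argues the fixed-point step through the algorithm's $T^{\leq n},T^{>n}$ structure, whereas you invoke Theorem~\ref{thm55} and the injectivity of $y\mapsto\myframeconfiguration{u''}{\ell'}{y}$ directly; this is a minor streamlining, not a different approach.
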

\begin{proof}
Let $u$ be a sorted stable configuration.
 
If there exists $i$ such that $\mathrm{pos}(E_{y_1+1}) \geq 1$, then there exists a 
stable intersection $(y_1',y_2')$ strictly before $(y_1,y_2)$. 
Let $u'=\myframeconfiguration{u''}{l'}{(y'_1,y'_2)}$. 
Since $\phi^{k}(u')=\left(T^{\leq n}\right)^{\alpha'(k)}\cdot\left(T^{>n}\right)^{\beta'(k)}(u)$ 
for some $(\alpha'(k),\beta'(k))\in\mathbb{N}^2$, we deduce that the fixed point of $\phi$, 
which is the single expected $K_{m,n}$-parking configuration is not before $(y'_1,y'_2)$
and is therefore distinct from $u$.

If for all $i=1,\ldots ,m-1$, $\mathrm{pos}(E_{y_1+1}) \leq 0$ then there is no 
stable intersection strictly before $(y_1,y_2)$ and the description of $\phi$ by 
positive powers of 
$\phi^{k}(u)=\left(T^{\leq n}\right)^{\alpha(k)}\cdot\left(T^{>n}\right)^{\beta(k)}(u)$ 
for some $(\alpha(k),\beta(k))\in\mathbb{N}^2 $ given by the algorithm implies that 
$\phi(u)=u$, hence $u$ is $K_{m,n}$-parking.
\end{proof}

\begin{Rem}\label{rem:inv}
This Remark is a sequel to Remark~\ref{rem:rem}. Theorems
\ref{thm55} and \ref{thm61} show, in particular, that the
operators $\varphi$ and $\psi$ acting on the sorted stable
configurations on $K_{m,n}$ are essentially inverse of each other.

In fact, if $c$ is a sorted stable configuration which is not
recurrent, then $\varphi(\psi(c))=c$, and if $c$ is a sorted
stable configuration which is not parking, then
$\psi(\varphi(c))=c$.

Moreover, in these cases the operators $\psi$ and $\varphi$ are
inverses of each others in the sense of semigroups, i.e.
$\psi(\varphi(\psi(c)))=\psi(c)$ and
$\varphi(\psi(\varphi(c)))=\varphi(c)$ for all sorted stable
configurations $c$.
\end{Rem}

Starting with any sorted stable configuration on $K_{m,n}$, we can
act iteratively with $\psi$ until we get a sorted recurrent
configuration, and then we can move back, acting with $\varphi$,
until we get a sorted $K_{m,n}$-parking configuration. In this
way, we always pass through $m$ distinct sorted stable
configurations, since these configurations correspond to the
stable intersections of the corresponding periodic bi-infinite
paths. Notice also that every sorted stable configuration occurs
in one of these $m$-sets.

This discussion provides the following {\em graduated description}
of all the $m$ sorted stable configurations on $K_{m,n}$ in each
toppling and permuting class.

\begin{Cor}
Let $c$ be the sorted recurrent configuration of a toppling and
permuting class of the sandpile model on $K_{m,n}$. Let
$(Nu''E,E\ell')$ be the parallelogram polyomino describing $c$.
Let $(z^{(0)},\ldots,z^{(m-1)})$ be the ordered stable
intersections of $((Nu'')^\mathbb{Z},(E\ell')^\mathbb{Z})$. Then
the $m$ sorted stable configurations toppling and permuting
equivalent to $c$ are described by
$$\myframeconfiguration{u''}{\ell'}{z^{(k)}} = \varphi^{k}(c)$$
for all $0\leq k \leq m-1$.

Similarly, let $c$ be a sorted $K_{m,n}$-parking configuration of
a toppling and permuting class of the sandpile model on $K_{m,n}$.
Let $(Nu''E,E\ell')$ be the pair of binomial paths describing $c$.
Let $(z^{(0)},\ldots,z^{(m-1)})$ be the ordered stable
intersections of $((Nu'')^\mathbb{Z},(E\ell')^\mathbb{Z})$. Then
the $m$ sorted stable configurations toppling and permuting
equivalent to $c$ are described by
$$\myframeconfiguration{u''}{\ell'}{z^{(k)}} = \psi^{m-k-1}(c)$$
for all $0\leq k \leq m-1$.
\end{Cor}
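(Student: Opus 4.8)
The plan is to read both identities directly off the pictorial descriptions of $\varphi$ and $\psi$ already established, once the endpoints of the chain of $m$ stable intersections are matched with the recurrent and the parking configurations. First I would recall from the proof of Theorem~\ref{thm55} that the pair $((Nu'')^{\mathbb{Z}},(E\ell')^{\mathbb{Z}})$ has exactly $m$ stable intersections $z^{(0)},\ldots,z^{(m-1)}$ (the count coming from Lemma~\ref{theo:cyclic_lemma}), ordered so that $z^{(m-1)}_1<\cdots<z^{(0)}_1$ and $z^{(m-1)}_2<\cdots<z^{(0)}_2$, and that Theorem~\ref{thm55} sends the configuration at $z^{(k)}$ to the one at $z^{(k+1)}$ for $0\le k<m-1$ while fixing $z^{(m-1)}$. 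Dually, Theorem~\ref{thm61} sends the configuration at $z^{(k)}$ to the one at $z^{(k-1)}$ for $0<k\le m-1$ while fixing $z^{(0)}$. Since by Theorem~\ref{psithm} the recurrent configuration is the unique fixed point of $\psi$, and by Theorem~\ref{varphithm} the $K_{m,n}$-parking configuration is the unique fixed point of $\varphi$, I would conclude that in this class the recurrent configuration is $\myframeconfiguration{u''}{\ell'}{z^{(0)}}$ and the parking configuration is $\myframeconfiguration{u''}{\ell'}{z^{(m-1)}}$.

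For the first statement, I would take the sorted recurrent $c$ together with the parallelogram polyomino $(Nu''E,E\ell')$ describing it; this polyomino is the frame measurement of $((Nu'')^{\mathbb{Z}},(E\ell')^{\mathbb{Z}})$ at the stable intersection that yields a polyomino, and since $c$ recurrent means $\psi(c)=c$, that intersection must be the unique $\psi$-fixed one, namely $z^{(0)}$, so $c=\myframeconfiguration{u''}{\ell'}{z^{(0)}}$. A $k$-fold application of Theorem~\ref{thm55} then yields $\varphi^{k}(c)=\myframeconfiguration{u''}{\ell'}{z^{(k)}}$ for $0\le k\le m-1$, which is the asserted identity. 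The second statement is entirely symmetric: starting from the sorted parking $c=\myframeconfiguration{u''}{\ell'}{z^{(m-1)}}$ and iterating Theorem~\ref{thm61} gives $\psi^{j}(c)=\myframeconfiguration{u''}{\ell'}{z^{(m-1-j)}}$, and substituting $j=m-k-1$ produces the claimed $\psi^{m-k-1}(c)=\myframeconfiguration{u''}{\ell'}{z^{(k)}}$. I would close by noting that the $m$ listed configurations are pairwise distinct, since distinct stable intersections give distinct sorted configurations (separated, for instance, by the invariants $I_1,I_2$ appearing in the injectivity argument for $P_{u'',\ell'}$), and that they exhaust the class, as recorded in the discussion preceding the corollary.

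The step I expect to be the main obstacle is pinning down the orientation of the indexing: one must be certain that $z^{(0)}$ (the largest intersection) is the recurrent one and $z^{(m-1)}$ (the smallest) the parking one, and that each interior application of $\varphi$ (resp.\ $\psi$) advances the index by exactly one rather than stalling early at a fixed point. Fortunately this is already the content of the $i=j+1$ minimality-of-$A$ contradiction inside the proof of Theorem~\ref{thm55}, together with its $\psi$-analogue obtained through the conjugation by $\rho\cdot\beta$ used in Theorem~\ref{thm61}; so no genuinely new argument is required, and the corollary follows by careful bookkeeping of the two chains of stable intersections once their endpoints are correctly labelled.
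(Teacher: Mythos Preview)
Your proposal is correct and mirrors the paper's own treatment: the paper does not give a separate formal proof of this corollary but derives it from the discussion immediately preceding it, which amounts precisely to iterating Theorems~\ref{thm55} and~\ref{thm61} along the chain of $m$ stable intersections and identifying the endpoints via the fixed-point characterizations of recurrent and parking configurations. Your explicit bookkeeping of the indexing (that $z^{(0)}$ is the $\psi$-fixed, hence recurrent, endpoint and $z^{(m-1)}$ the $\varphi$-fixed, hence parking, endpoint) is exactly what the paper leaves implicit in the phrase ``this discussion provides the following graduated description.''
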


Figure~\ref{fig:toppling-class} illustrates the $4$ sorted stable
configurations of the toppling and permuting class described by
the example in Figure~\ref{fig:infinite-pair}.\newline

\figureornotfigure{
\begin{figure}[ht!]
\newcommand{\myzsize}{0.60}
\newcommand{\csty}{\footnotesize}
\subfigure[$\csty \varphi^3(u)={0,0,0,2,3,3, \choose
0,2,3,*}$\label{fig:phi3}]{
\begin{tikzpicture}[scale=\myzsize]
\draw[draw=white!50!black]  (0,0) grid (4, 6); \draw[rounded
corners, opacity=0.3, line width=5, draw = orange] (0,0) rectangle
(4, 6); \draw[line width=2,draw=red] (-0.05, 0.05)--(-0.05,
1.05)--(0.95, 1.05)--(0.95, 3.05)--(1.95, 3.05)--(1.95,
4.05)--(2.95, 4.05)--(2.95, 6.05); \draw[line width=2,draw=dgreen]
(0.05, -0.05)--(1.05, -0.05)--(1.05, 0.95)--(1.05, 0.95)--(1.05,
1.95)--(1.05, 1.95)--(1.05, 2.95)--(3.05, 2.95)--(3.05,
3.95)--(4.05, 3.95)--(4.05, 4.95)--(4.05, 4.95)--(4.05, 5.95);
\draw[fill=orange] (0,0) circle (0.2); \draw[orange] node at
(1,-1) {\small$z^{(3)}=(0, 0)$}; \foreach \x/\ux in {2/3,1/2,0/0}{
\draw[red] node at (\x+0.5,-0.5) {\small$\ux$}; } \foreach \y/\uy
in {5/3,4/3,3/2,2/0,1/0,0/0}{ \draw[dgreen] node at (-0.5,\y+0.5)
{\small$\uy$}; } \foreach \x/\rx/\ux in {2/2/4,1/1/3,0/0/1}{
\draw[red] node at (\x+0.6,\ux+0.4) {\small$E_{\rx}$}; } \foreach
\y/\ry/\uy in {5/5/4,4/4/4,3/3/3,2/2/1,1/1/1,0/0/1}{ \draw[dgreen]
node at (\uy+0.6,\y+0.4) {\small$N_{\ry}$}; }
\end{tikzpicture}
} \subfigure[$\csty\varphi^2(u)={0,1,1,2,2,2,\choose
0,3,5,*}$\label{fig:phi2}]{
\begin{tikzpicture}[scale=\myzsize]
\draw[draw=white!50!black]  (0,0) grid (4, 6); \draw[rounded
corners, opacity=0.3, line width=5, draw = orange] (0,0) rectangle
(4, 6); \draw[line width=2,draw=red] (-0.05, 0.05)--(-0.05,
1.05)--(0.95, 1.05)--(0.95, 4.05)--(1.95, 4.05)--(1.95,
6.05)--(2.95, 6.05)--(2.95, 6.05); \draw[line width=2,draw=dgreen]
(0.05, -0.05)--(1.05, -0.05)--(1.05, 0.95)--(2.05, 0.95)--(2.05,
1.95)--(2.05, 1.95)--(2.05, 2.95)--(3.05, 2.95)--(3.05,
3.95)--(3.05, 3.95)--(3.05, 4.95)--(3.05, 4.95)--(3.05, 5.95) --
(4.05,5.95); \draw node at (4.5,0) {}; \draw[fill=orange] (0,0)
circle (0.2); \draw[orange] node at (1,-1) {\small$z^{(2)}=(2, 3)$};
\foreach \x/\ux in {2/5,1/3,0/0}{ \draw[red] node at (\x+0.5,-0.5)
{\small$\ux$}; } \foreach \y/\uy in {5/2,4/2,3/2,2/1,1/1,0/0}{
\draw[dgreen] node at (-0.5,\y+0.5) {\small$\uy$}; } \foreach
\x/\rx/\ux in {2/4/6,1/3/4,0/2/1}{ \draw[red] node at
(\x+0.6,\ux+0.4) {\small$E_{\rx}$}; } \foreach \y/\ry/\uy in
{5/8/3,4/7/3,3/6/3,2/5/2,1/4/2,0/3/1}{ \draw[dgreen] node at
(\uy+0.6,\y+0.4) {\small$N_{\ry}$}; }
\end{tikzpicture}
} \subfigure[$\csty\varphi(u) = {0,0,1,1,1,3, \choose
2,4,5,*}$\label{fig:phi1}]{
\begin{tikzpicture}[scale=\myzsize]
\draw[draw=white!50!black]  (0,0) grid (4, 6); \draw[rounded
corners, opacity=0.3, line width=5, draw = orange] (0,0) rectangle
(4, 6); \draw[line width=2,draw=red] (-0.05, 0.05)--(-0.05,
3.05)--(0.95, 3.05)--(0.95, 5.05)--(1.95, 5.05)--(1.95,
6.05)--(2.95, 6.05)--(2.95, 6.05); \draw[line width=2,draw=dgreen]
(0.05, -0.05)--(1.05, -0.05)--(1.05, 0.95)--(1.05, 0.95)--(1.05,
1.95)--(2.05, 1.95)--(2.05, 2.95)--(2.05, 2.95)--(2.05,
3.95)--(2.05, 3.95)--(2.05, 4.95)--(4.05, 4.95)--(4.05, 5.95);
\draw[fill=orange] (0,0) circle (0.2); \draw[orange] node at
(1,-1) {\small$z^{(1)}=(3, 4)$}; \foreach \x/\ux in {2/5,1/4,0/2}{
\draw[red] node at (\x+0.5,-0.5) {\small$\ux$}; } \foreach \y/\uy
in {5/3,4/1,3/1,2/1,1/0,0/0}{ \draw[dgreen] node at (-0.5,\y+0.5)
{\small$\uy$}; } \foreach \x/\rx/\ux in {2/5/6,1/4/5,0/3/3}{
\draw[red] node at (\x+0.6,\ux+0.4) {\small$E_{\rx}$}; } \foreach
\y/\ry/\uy in {5/9/4,4/8/2,3/7/2,2/6/2,1/5/1,0/4/1}{ \draw[dgreen]
node at (\uy+0.6,\y+0.4) {\small$N_{\ry}$}; }
\end{tikzpicture}
} \subfigure[$\csty u={1,2,2,3,3,3, \choose
0,3,5,*}$\label{fig:phi0}]{
\begin{tikzpicture}[scale=\myzsize]
\draw[draw=white!50!black]  (0,0) grid (4, 6); \draw[rounded
corners, opacity=0.3, line width=5, draw = orange] (0,0) rectangle
(4, 6); \draw[line width=2,draw=red] (-0.05, 0.05)--(-0.05,
1.05)--(0.95, 1.05)--(0.95, 4.05)--(1.95, 4.05)--(1.95,
6.05)--(2.95, 6.05)--(2.95, 6.05); \draw[line width=2,draw=dgreen]
(0.05, -0.05)--(2.05, -0.05)--(2.05, 0.95)--(3.05, 0.95)--(3.05,
1.95)--(3.05, 1.95)--(3.05, 2.95)--(4.05, 2.95)--(4.05,
3.95)--(4.05, 3.95)--(4.05, 4.95)--(4.05, 4.95)--(4.05, 5.95);
\draw[fill=orange] (0,0) circle (0.2); \draw[orange] node at
(1,-1) {\small$z^{(0)}=(5, 9)$}; \foreach \x/\ux in {2/5,1/3,0/0}{
\draw[red] node at (\x+0.5,-0.5) {\small$\ux$}; } \foreach \y/\uy
in {5/3,4/3,3/3,2/2,1/2,0/1}{ \draw[dgreen] node at (-0.5,\y+0.5)
{\small$\uy$}; } \foreach \x/\rx/\ux in {2/7/6,1/6/4,0/5/1}{
\draw[red] node at (\x+0.6,\ux+0.4) {\small$E_{\rx}$}; } \foreach
\y/\ry/\uy in {5/14/4,4/13/4,3/12/4,2/11/3,1/10/3,0/9/2}{
\draw[dgreen] node at (\uy+0.6,\y+0.4) {\small $N_{\ry}$}; }
\end{tikzpicture}
} \caption{The toppling and permuting equivalent sorted stable
configurations from parking to recurrent
configurations.\label{fig:toppling-class}}
\end{figure}
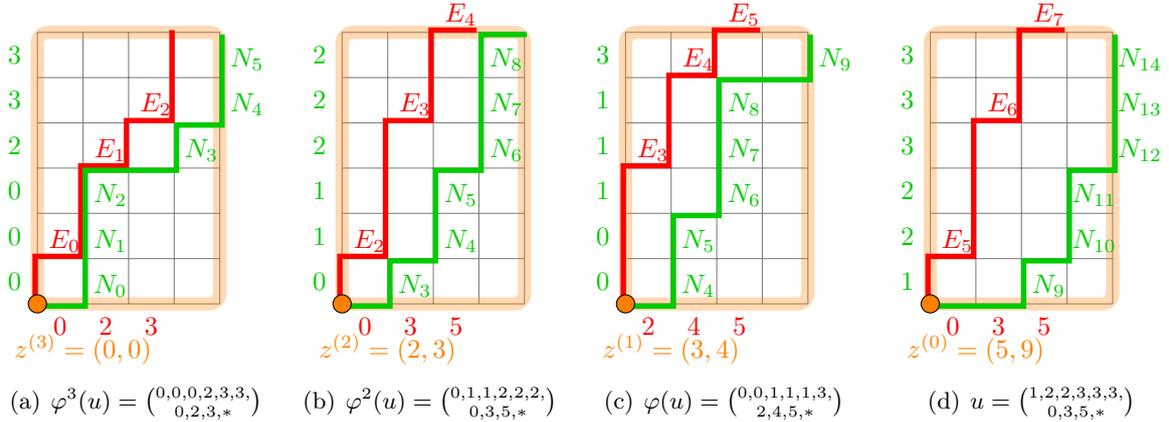
}

As another corollary, we also recover the known bijection (see
\cite{YM}) between sorted recurrent configurations on $K_{m,n}$
and parallelogram polyominoes, since those are exactly the fixed
points of the (pictorial) operator $\psi$.

These descriptions of the extremal stable configurations in the
graduation, be they parking or recurrent, are particular cases of
the following general and \emph{local} description of the grade of
a stable configuration.

For any sorted stable configuration $c$, we have a pair of finite
binomial paths $(u'',\ell')$ which occurs in a pair $p$ of
bi-infinite paths after a stable intersection $y=(y_1,y_2)$. We
consider the $m$ east steps of the (green) factor $E\ell'$, which
are the $(E_{y_1+k})_{0\leq k<m}$ in $p$. We define
$$P^{[y]}_{\geq 1}(c)=\{(y_1+k)\mod m ~:~ 0\leq k < m \mbox{ and } \pos(E_{y_1+k}) \geq 1\}$$
which describes the east steps in $E\ell'$ whose parameter $\pos$
is at least $1$.  The \emph{grade} $\grade(c)$ of a stable
configuration $c$ is defined as the cardinality of $P^{[y]}_{\geq
1}(c)$.

We remark that this definition of grade is not changed by a
translation $t=(t_1,t_2)\in\mathbb{Z}^2$ of the pair $p$ of
bi-infinite path. Indeed, the stable intersection describing $c$
becomes $y+t=(y_1+t_1,y_2+t_2)$ and
$$P^{[y+t]}_{\geq 1}(c) = \{ (x+t_1) \mod m ~:~ x\in P^{[y]}_{\geq 1}(c)\}$$
hence $|P^{[y+t]}_{\geq 1}(c)|=|P^{[y]}_{\geq 1}(c)|$. So an
equivalent and explicitly local definition is
$$\grade(c)=|P^{[(0,0)]}_{\geq 1}(c)|.$$

In Figure~\ref{fig:dual-toppling-class} we reproduce the stable
configurations in Figure~\ref{fig:toppling-class}, mentioning now
the indices of east green steps and north red steps, used to
compute the $\pos(E_{y_1+k})$ in the definition of $P^{[y]}_{\geq
1}$. We draw a circle around the green east steps such that
$\pos(E_{y_i})\geq 1$. By additional convention, the sorted parking
configuration of a toppling and permuting equivalent class is
described by a stable intersection at the origin
$z^{(m-1)}=z^{(3)}=(0,0)$. This additional convention induces a global
choice of stable intersections for the stable configurations of
this class. This convention will be used in the proof of the
following Proposition~\ref{prop:grade}.

\figureornotfigure{
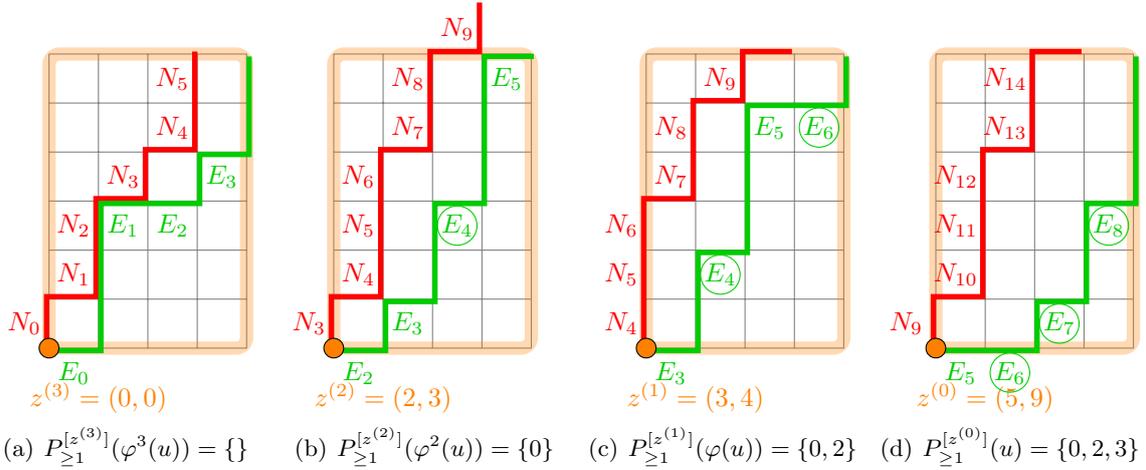
\begin{figure}[ht!]
\newcommand{\myzsize}{0.65}
\newcommand{\csty}{\footnotesize}
\subfigure[$\csty P^{[z^{(3)}]}_{\geq
1}(\varphi^3(u))=\{\}$\label{fig:Pphi3}]{
\begin{tikzpicture}[scale=\myzsize]
\draw[draw=white!50!black]  (0,0) grid (4, 6); \draw[rounded
corners, opacity=0.3, line width=5, draw = orange] (0,0) rectangle
(4, 6); \draw[line width=2,draw=red] (-0.05, 0.05)--(-0.05,
1.05)--(0.95, 1.05)--(0.95, 3.05)--(1.95, 3.05)--(1.95,
4.05)--(2.95, 4.05)--(2.95, 6.05); \draw[line width=2,draw=dgreen]
(0.05, -0.05)--(1.05, -0.05)--(1.05, 0.95)--(1.05, 0.95)--(1.05,
1.95)--(1.05, 1.95)--(1.05, 2.95)--(3.05, 2.95)--(3.05,
3.95)--(4.05, 3.95)--(4.05, 4.95)--(4.05, 4.95)--(4.05, 5.95);
\draw[fill=orange] (0,0) circle (0.2); \draw[orange] node at
(1,-1) {\small$z^{(3)}=(0, 0)$};
\foreach \x/\rx/\ux in {-1/0/0,0/1/1,0/2/2,1/3/3,2/4/4,2/5/5}{
\draw[red] node at (\x+0.5,\ux+0.5) {\small$N_{\rx}$}; } \foreach
\y/\ry/\uy in {-1/0/0,2/1/1,2/2/2,3/3/3}{ \draw[dgreen] node at
(\uy+0.5,\y+0.5) {\small$E_{\ry}$}; } \foreach \y/\x in {}{
\draw[dgreen] (\x+0.5,\y+0.5) circle (0.2); }
\end{tikzpicture}
} \subfigure[$\csty P^{[z^{(2)}]}_{\geq
1}(\varphi^2(u))=\{0\}$\label{fig:Pphi2}]{
\begin{tikzpicture}[scale=\myzsize]
\draw[draw=white!50!black]  (0,0) grid (4, 6); \draw[rounded
corners, opacity=0.3, line width=5, draw = orange] (0,0) rectangle
(4, 6); \draw[line width=2,draw=red] (-0.05, 0.05)--(-0.05,
1.05)--(0.95, 1.05)--(0.95, 4.05)--(1.95, 4.05)--(1.95,
6.05)--(2.95, 6.05)--(2.95, 6.05) -- (2.95,7.05); \draw[line
width=2,draw=dgreen] (0.05, -0.05)--(1.05, -0.05)--(1.05,
0.95)--(2.05, 0.95)--(2.05, 1.95)--(2.05, 1.95)--(2.05,
2.95)--(3.05, 2.95)--(3.05, 3.95)--(3.05, 3.95)--(3.05,
4.95)--(3.05, 4.95)--(3.05, 5.95) -- (4.05,5.95); \draw node at
(4.5,0) {}; \draw[fill=orange] (0,0) circle (0.2); \draw[orange]
node at (1,-1) {\small$z^{(2)}=(2, 3)$};
\foreach \x/\rx/\ux in
{-1/3/0,0/4/1,0/5/2,0/6/3,1/7/4,1/8/5,2/9/6}{ \draw[red] node at
(\x+0.5,\ux+0.5) {\small$N_{\rx}$}; } \foreach \y/\ry/\uy in
{-1/2/0,0/3/1,2/4/2,5/5/3}{ \draw[dgreen] node at (\uy+0.5,\y+0.5)
{\small$E_{\ry}$}; } \foreach \y/\x in {2/2}{ \draw[dgreen]
(\x+0.5,\y+0.5) circle (0.4); }
\end{tikzpicture}
} \subfigure[$\csty P^{[z^{(1)}]}_{\geq 1}(\varphi(u)) =
\{0,2\}$\label{fig:phi9}]{
\begin{tikzpicture}[scale=\myzsize]
\draw[draw=white!50!black]  (0,0) grid (4, 6); \draw[rounded
corners, opacity=0.3, line width=5, draw = orange] (0,0) rectangle
(4, 6); \draw[line width=2,draw=red] (-0.05, 0.05)--(-0.05,
3.05)--(0.95, 3.05)--(0.95, 5.05)--(1.95, 5.05)--(1.95,
6.05)--(2.95, 6.05)--(2.95, 6.05); \draw[line width=2,draw=dgreen]
(0.05, -0.05)--(1.05, -0.05)--(1.05, 0.95)--(1.05, 0.95)--(1.05,
1.95)--(2.05, 1.95)--(2.05, 2.95)--(2.05, 2.95)--(2.05,
3.95)--(2.05, 3.95)--(2.05, 4.95)--(4.05, 4.95)--(4.05, 5.95);
\draw[fill=orange] (0,0) circle (0.2); \draw[orange] node at
(1,-1) {\small$z^{(1)}=(3, 4)$};
\foreach \x/\rx/\ux in {-1/4/0,-1/5/1,-1/6/2,0/7/3,0/8/4,1/9/5}{
\draw[red] node at (\x+0.5,\ux+0.5) {\small$N_{\rx}$}; } \foreach
\y/\ry/\uy in {-1/3/0,1/4/1,4/5/2,4/6/3}{ \draw[dgreen] node at
(\uy+0.5,\y+0.5) {\small$E_{\ry}$}; } \foreach \y/\x in {1/1,4/3}{
\draw[dgreen] (\x+0.5,\y+0.5) circle (0.4); }
\end{tikzpicture}
} \subfigure[$\csty P^{[z^{(0)}]}_{\geq
1}(u)=\{0,2,3\}$\label{fig:phi8}]{
\begin{tikzpicture}[scale=\myzsize]
\draw[draw=white!50!black]  (0,0) grid (4, 6); \draw[rounded
corners, opacity=0.3, line width=5, draw = orange] (0,0) rectangle
(4, 6); \draw[line width=2,draw=red] (-0.05, 0.05)--(-0.05,
1.05)--(0.95, 1.05)--(0.95, 4.05)--(1.95, 4.05)--(1.95,
6.05)--(2.95, 6.05)--(2.95, 6.05); \draw[line width=2,draw=dgreen]
(0.05, -0.05)--(2.05, -0.05)--(2.05, 0.95)--(3.05, 0.95)--(3.05,
1.95)--(3.05, 1.95)--(3.05, 2.95)--(4.05, 2.95)--(4.05,
3.95)--(4.05, 3.95)--(4.05, 4.95)--(4.05, 4.95)--(4.05, 5.95);
\draw[fill=orange] (0,0) circle (0.2); \draw[orange] node at
(1,-1) {\small$z^{(0)}=(5, 9)$};
\foreach \x/\rx/\ux in
{-1/9/0,0/10/1,0/11/2,0/12/3,1/13/4,1/14/5}{ \draw[red] node at
(\x+0.4,\ux+0.5) {\small$N_{\rx}$}; } \foreach \y/\ry/\uy in
{-1/5/0,-1/6/1,0/7/2,2/8/3}{ \draw[dgreen] node at
(\uy+0.5,\y+0.5) {\small $E_{\ry}$}; } \foreach \y/\x in
{-1/1,0/2,2/3}{ \draw[dgreen] (\x+0.5,\y+0.5) circle (0.4); }
\end{tikzpicture}
} \caption{Evaluation of $P^{[z^{(i})]}_{\geq 1}$ from parking to
recurrent
  configurations as in Figure~\ref{fig:toppling-class}.\label{fig:dual-toppling-class}}
\end{figure}
} 

\begin{Prop}
\label{prop:grade} Let $c$ be a sorted stable configuration on
$K_{m,n}$. Let $\parking(c)$ (resp. $\recurrent(c)$) be the
parking (resp. recurrent) configuration in the toppling and
permuting class of $c$. We have
$$ \psi^{\grade(c)}(\parking(c)) = c = \varphi^{m-1-\grade(c)}(\recurrent(c)).$$
\end{Prop}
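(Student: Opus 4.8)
The plan is to reduce the entire statement to a single combinatorial identity: that the configuration described by the stable intersection $z^{(j)}$ has grade $m-1-j$. First I would invoke the Corollary preceding the proposition, which identifies the $m$ sorted stable configurations in the toppling and permuting class of $c$ as $\myframeconfiguration{u''}{\ell'}{z^{(j)}}$ for $0\le j\le m-1$, where the ordered stable intersections satisfy $z^{(0)}=\recurrent(c)$, $z^{(m-1)}=\parking(c)$ and $z^{(j)}=\varphi^{j}(\recurrent(c))=\psi^{m-1-j}(\parking(c))$. Writing $c=\myframeconfiguration{u''}{\ell'}{z^{(j)}}$ for the unique index $j$ attached to $c$ (unique since distinct stable intersections give distinct configurations), it suffices to prove $\grade(c)=m-1-j$. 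Indeed, substituting this into the Corollary's formulas immediately yields $\varphi^{m-1-\grade(c)}(\recurrent(c))=\varphi^{j}(\recurrent(c))=c$ and $\psi^{\grade(c)}(\parking(c))=\psi^{m-1-j}(\parking(c))=c$, which are precisely the two asserted equalities.

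It then remains to compute $\grade(c)$ directly from its definition as $|P^{[z^{(j)}]}_{\geq 1}(c)|$, that is, to count among the $m$ green east steps $E_{z^{(j)}_1},E_{z^{(j)}_1+1},\dots,E_{z^{(j)}_1+m-1}$ — which occupy $m$ consecutive abscissas and hence one residue class modulo $m$ each — those with $\pos\ge 1$. The key input is the relation $\pos(E_{i+m})=\pos(E_i)+1$ established in the proof of the Cyclic Lemma (Lemma~\ref{theo:cyclic_lemma}). It implies that in each residue class $r$ modulo $m$ there is a unique abscissa $s_r$ with $\pos(E_{s_r})=0$, that these $m$ abscissas are precisely $z^{(0)}_1>z^{(1)}_1>\dots>z^{(m-1)}_1$, and that any east step at an abscissa $a\equiv s_r\pmod m$ satisfies $\pos(E_a)=(a-s_r)/m$.

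The heart of the argument is then a short case analysis over the length-$m$ window $[z^{(j)}_1,\,z^{(j)}_1+m-1]$, which contains exactly one abscissa $w_r$ in each residue class $r$. I would show that $\pos(E_{w_r})\ge 1$ holds precisely when $s_r<z^{(j)}_1$: in that case $w_r\ge z^{(j)}_1>s_r$ forces the positive multiple $w_r-s_r$ to be at least $m$, whereas if $s_r\ge z^{(j)}_1$ then either $w_r=s_r$ lies in the window, giving $\pos(E_{w_r})=0$, or $w_r<s_r$ sits a full period below it, giving $\pos(E_{w_r})\le -1$; either way $\pos(E_{w_r})\le 0$. Since the stable abscissas strictly below $z^{(j)}_1$ are exactly $z^{(j+1)}_1,\dots,z^{(m-1)}_1$, there are precisely $m-1-j$ of them, and therefore $\grade(c)=m-1-j$, completing the proof. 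The main obstacle here is purely bookkeeping rather than conceptual: one must keep the period-shift encoded by $\pos(E_{i+m})=\pos(E_i)+1$ distinct from the ordering of the stable intersections, and remain aware that the $m$ stable abscissas need not fit inside a single length-$m$ window, so that for $i<j$ the window representative $w_{r_i}$ may sit strictly below $z^{(i)}_1$ and hence carry negative $\pos$. Figure~\ref{fig:dual-toppling-class}, in which the circled east steps realize exactly this count, provides a reassuring check at each step.
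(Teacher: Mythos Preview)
Your proof is correct and follows essentially the same route as the paper's. Both arguments reduce to showing $\grade\bigl(\myframeconfiguration{u''}{\ell'}{z^{(j)}}\bigr)=m-1-j$ by matching each east step in the window $[z^{(j)}_1,z^{(j)}_1+m-1]$ to the unique stable abscissa in its residue class modulo $m$ and observing that $\pos\geq 1$ occurs precisely when that stable abscissa lies strictly below $z^{(j)}_1$; the paper encodes this matching via the map $k\mapsto z^{(f(k))}$ while you phrase it through residue-class representatives $s_r$, but the content is identical.
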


\begin{proof}
Let $(z^{(i)})_{i=0,\ldots,m-1}$ the $m$ stable intersections in the
pair of paths related to $c$. By convention we assume without loss
of generality that $z^{(m-1)}=(0,0)$. We shall use the notation
$z^{(i)}=(X_1(z^{(i)}),X_2(z^{(i)}))$ for stable intersections as
we did for steps. For each green east step $E_k$, $k\in \mathbb{Z}$, the
east green step $E_{k-m\pos(E_k)}$ belongs to a stable
intersection denoted $z^{(f(k))}$ since
$$\pos(E_{k-m\pos(E_k)}) = \pos(E_k)-\pos(E_k) = 0,$$
using the relation $\pos(E_{k+m})=\pos(E_k)+1$ induced by
periodicities of paths. Hence, we have the equivalence
$$\pos(E_k) \geq 1 \iff X_1(E_k) > X_1(z^{(f(k))}).$$ Using this
equivalence the definition of $P^{[z^{(i)}]}_{\geq
1}(\phi^i(\recurrent(c)))$ becomes
$$ P^{[z^{(i)}]}_{\geq 1}(\varphi^i(\recurrent(c)))=\{ X_1(z^{(j)}) \mod m ~:~ j=m-1,\ldots ,j+1\}$$
since $\{z^{(j)}\}_{j=0, \ldots ,m-1} =
\{z^{(f(E_{X_1(z^{(i)})+k}))}\}_{k=0,\ldots ,m-1}$, $X_1(z^{(0)}) > X_1(z^{(1)}) >
\ldots > X_1(z^{(m-1)})$ and for the step $E_{X_1(z^{(i)})+k}$
$$\begin{array}{lcl} \pos(E_{X_1(z^{(i)})+k}) \geq 1 & \iff & X_1(z^{(f(X_1(z^{(i)})+k))}) <  X_1(z^{(i)})+k \\
\ &  \iff&  X_1(z^{(f(X_1(z^{(i)})+k))}) \leq X_1(z^{(i)})+k-m < X_1(z^{(i)})
\end{array}$$ where the last equivalence uses
$X_1(z^{(f(X_1(z^{(i)})+k))})-(X_1(z^{(i)})+k) \mod m = 0$. This equivalent
definition implies that
$$\grade(\varphi^i(\recurrent(c)))=m-1-i$$
and the proposition follows.
\end{proof}

\section{Some enumerative results}\label{subsec62}
In this section we will present some enumerative results that we
can derive by considering pairs of bi-infinite paths in which one of
the paths has a particularly regular step-like structure. 
Specializations of our Cyclic lemma lead to lattice path enumerations that are new, e.g. Proposition~\ref{prop65}, and already established, e.g. Proposition~\ref{prop63}.

Let $p$ be a binomial word on the alphabet $\{N,E\}$, and which we will call a pattern in this context.
A binomial word $w$
\emph{cyclically matches} the pattern $p$ if $Ew$ may be decomposed as
$Ew=fg$ where $gf=p$.  Let $\mathrm{Cyc}[p]$ be the set of binomial
words that cyclically match $p$. We denote by $\mathrm{Polyo}[p]$ the
polyominoes whose lower path is $p$.

Let $a$, $b$ and $c$ be positive integers. Fix $p =
(E^aN^b)^c$ and consider $\mathrm{Polyo}[(E^aN^b)^c]$, the set
of parallelogram polyominoes having an $ac\times bc$ bounding box and
such that the lower path is $(E^aN^b)^c$.
It transpires that one can restrict the Cyclic Lemma to the pairs in the cartesian product $B_{cb-1,ca-1}\times \mathrm{Cyc}[(E^aN^b)^c]$ in order to
count those parallelogram polyominoes in $\mathrm{Polyo}[(E^aN^b)^c]$.
This gives us the following result which also appears 
in Irving and Rattan~\cite[Cor. 16]{IR} in 2009 and which can be further traced back
to Bonin, de Mier and Noy~\cite[Thm. 8.3]{BMN} in 2003.
The proof of Bonin et al. Theorem 8.3 is a specialization of our more general Cyclic Lemma.

\begin{Prop}\label{prop63}
For all $a,b,c\geq 1$, we have
$$ |\mathrm{Polyo}[(E^aN^b)^c]| =  \frac{1}{c}{ c(b+a)-2 \choose ca-1}.$$
\end{Prop}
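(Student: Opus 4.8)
The plan is to specialise the Cyclic Lemma (Lemma~\ref{theo:cyclic_lemma}) to the parameters $m=ca$ and $n=cb$, so that the bounding box is $ca\times cb$ and the pattern $p=(E^aN^b)^c$ is precisely an $(m,n)$-binomial word with $m=ca$ east steps. By the Cyclic Lemma each parallelogram polyomino in $\Para_{m,n}$ is the unique polyomino of one part $\pi_k$, and the lower (green) path of that polyomino is the word $E\ell'$ read at the distinguished stable intersection. Counting $\mathrm{Polyo}[(E^aN^b)^c]$ thus amounts to counting those parts whose polyomino has lower path equal to $p$.

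First I would note that belonging to $\mathrm{Cyc}[p]$ is a property of the whole part, not of an individual representative. The $m$ pairs forming a part $\pi_{(u'',\ell')}$ are extracted at the $m$ stable intersections of a single bi-infinite pair $((Nu'')^{\mathbb{Z}},(E\ell')^{\mathbb{Z}})$, and the green words read off there are cyclic rotations of $E\ell'$, each beginning with an east step; in particular they are all rotations of one another. Hence if one of them is a rotation of $p$, then all are, and $S:=B_{m-1,n-1}\times\mathrm{Cyc}[p]$ is a union of entire parts, each still of the common cardinality $m=ca$ guaranteed by the Lemma.

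The crucial step, and the source of the factor $1/c$, is to verify that every part contained in $S$ yields a polyomino whose lower path is exactly $p$ and not some other rotation. The green bi-infinite path of such a part is $(E^aN^b)^{\mathbb{Z}}$, which is $(a+b)$-periodic, while the lower path of the associated polyomino is the unique rotation of $p$ that starts with $E$ and ends with $N$ (equivalently, that begins at an $N\to E$ valley). Because $p=(E^aN^b)^c$ is $c$-fold periodic, each of its valley-rotations coincides with $p$ itself; I would make this explicit from the primitivity of $E^aN^b$. Consequently the Cyclic-Lemma bijection restricts to a bijection between the parts contained in $S$ and $\mathrm{Polyo}[p]$. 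This collapse of the $c$ valley-rotations onto a single word is exactly why the normalisation is $1/c$ instead of the $1/m$ of the unrestricted Lemma, and it is the main subtlety to get right.

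It then remains to count $|S|=|B_{m-1,n-1}|\cdot|\mathrm{Cyc}[p]|$ and divide by the part size $m=ca$. Here $|B_{m-1,n-1}|=\binom{m+n-2}{m-1}=\binom{c(a+b)-2}{ca-1}$, while $|\mathrm{Cyc}[p]|$ is the number of distinct rotations of $p$ beginning with $E$; since $E^aN^b$ is primitive, $p$ has $a+b$ distinct rotations of which exactly $a$ begin with an east step, so $|\mathrm{Cyc}[p]|=a$. Dividing gives
$$|\mathrm{Polyo}[(E^aN^b)^c]|=\frac{1}{ca}\,\binom{c(a+b)-2}{ca-1}\cdot a=\frac{1}{c}\binom{c(b+a)-2}{ca-1},$$
as required. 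The only routine checks I would still supply are that the green word extracted at a stable intersection is a rotation of the green period (immediate from periodicity) and that a rotation of $p$ which begins with $E$ and ends with $N$ necessarily begins at a valley and hence equals $p$.
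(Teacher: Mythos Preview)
Your proof is correct and follows essentially the same route as the paper's: restrict the Cyclic Lemma to the set $B_{ca-1,cb-1}\times\mathrm{Cyc}[(E^aN^b)^c]$, observe that this set is a union of full parts of size $ca$, that the unique polyomino in each such part has lower path exactly $(E^aN^b)^c$ (since its lower path is a conjugate of $p$ beginning with $E$ and ending with $N$, hence equal to $p$), and that $|\mathrm{Cyc}[(E^aN^b)^c]|=a$. Your organisation is slightly more explicit about why $S$ is part-saturated and why the $c$ valleys collapse, but the argument and the arithmetic are the same as the paper's.
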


\begin{proof}
The set $\mathrm{Polyo}[(E^aN^b)^c]$, as a subset of parallelogram polyominoes having a $ca\times cb$ bounding box,
corresponds to a subset $R^{a,b,c}$ of sorted recurrent configurations on $K_{ac,ab}$.
To be able to apply the Cyclic Lemma, we simply have to identify the set $P^{a,b,c}$ of all possible pairs
in $B_{ca-1,cb-1}\times B_{ca-1,cb}$ deduced from stable intersections of pairs of paths
$((Nu'')^{\mathbb{Z}},(E\ell')^{\mathbb{Z}})$ related to these configurations in $R^{a,b,c}$.

First we provide a necessary condition on $P^{a,b,c}$. Recall that $E\ell'=(E^aN^b)^c$.
At a stable intersection $y$, we have the binomial word $w$ such that
$Ew=\mypathfactor{y}{((E^aN^b)^c)^{\mathbb{Z}}}{c(a+b)}$
is any period of $((E^aN^b)^c)^{\mathbb{Z}}$ which starts with the letter $E$.
So $w$ necessarily cyclically matches $(E^aN^b)^c$, i.e. $w\in \mathrm{Cyc}[(E^aN^b)^c]$.

Next we show that for $(v,w)\in B_{ca-1,cb-1}\times B_{ca-1,cb}$,
the condition $w\in \mathrm{Cyc}[(E^aN^b)^c]$ is also a sufficient condition for $(v,w)\in P^{a,b,c}$.
In this case we remark that the single parallelogram polyomino defined by a stable intersection of
$((Nv)^{\mathbb{Z}},(Ew)^{\mathbb{Z}})$ belongs to $\mathrm{Polyo}[(E^aN^b)^c]$.
This is because its lower path is $Ef$ where $f\in \mathrm{Cyc}[(E^aN^b)^c]$ and $Ef$ ends in the letter $N$,
as it does for any parallelogram polyomino, hence $Ef = (E^aN^b)^c$.
This gives us the following description of pairs of paths involved in these instances of the cyclic lemma:
$$ P^{a,b,c} = B_{ca-1,cb-1} \times \mathrm{Cyc}[(E^aN^b)^c].$$

We remark that $|\mathrm{Cyc}[(E^aN^b)^c]| = a$ since it is easily shown that this set is in bijection with marking one letter $E$ in the factor $E^aN^b$.
Therefore,
\begin{align*}
|\mathrm{Polyo}[(E^aN^b)^c]| &=
\frac{1}{ca}|B_{ca-1,cb-1}||\mathrm{Cyc}[(E^aN^b)^c]|.\qedhere
\end{align*}
\end{proof}

We leave it to the reader to verify the following classical results obtained here as a corollary.

\begin{Cor}\ \\ \vspace*{-1em}
\begin{enumerate}
\item[(i)] $\mathrm{Polyo}[(EN)^{n+1}]$ is in bijection with Dyck
words of semi-length $n$. (In this case the restriction of the
cyclic lemma is essentially the Dvoretsky-Motzkin cyclic lemma.)
\item[(ii)]$\mathrm{Polyo}[(EN^m)^{n}]$ is in bijection with paths
in $B_{n,mn}$ consisting of $n(m+1)$ steps which are above the
line of slope $m$.
\end{enumerate}
\end{Cor}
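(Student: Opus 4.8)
The plan is to read off both parts as numerical specializations of Proposition~\ref{prop63}, and then to match the resulting formula against the relevant classical count. For part~(i) I would put $a=b=1$ and $c=n+1$, so that Proposition~\ref{prop63} yields
$$|\mathrm{Polyo}[(EN)^{n+1}]| = \frac{1}{n+1}\binom{2(n+1)-2}{(n+1)-1} = \frac{1}{n+1}\binom{2n}{n} = C_n,$$
the $n$-th Catalan number, which is exactly the number of Dyck words of semi-length $n$; equinumerosity then gives the asserted bijection. It is worth recording that in this case $\mathrm{Cyc}[(EN)^{n+1}]$ is a singleton, so each part $\pi_k$ of the Cyclic Lemma consists of $n+1$ pairs and selects a single polyomino, and this selection is precisely the Dvoretzky--Motzkin rotation argument of \cite{DVOR} that produces the factor $1/(n+1)$.

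For part~(ii) I would put $a=1$, $b=m$, $c=n$, so that Proposition~\ref{prop63} gives
$$|\mathrm{Polyo}[(EN^m)^n]| = \frac{1}{n}\binom{n(m+1)-2}{n-1}.$$
It then remains to show that this number also counts the paths of $B_{n,mn}$ lying above the line $y=mx$ of slope $m$, where ``above'' must be read as strictly above for $0<x<n$ with contact only at the endpoints $(0,0)$ and $(n,mn)$. I would establish this count directly by a cycle-lemma argument: encode each $N$-step as $+1$ and each $E$-step as $-m$, so that a partial sum of the encoded word equals $y-mx$ at the corresponding vertex, and the ``above'' condition is exactly positivity of all proper partial sums. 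Such a path is forced to start with $N$ and to end with $E$; deleting the terminal $E$ gives a word of length $n(m+1)-1$ with $n-1$ letters $E$ and $mn$ letters $N$, of total sum $mn-m(n-1)=m>0$, all of whose partial sums are required to be positive.

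Since the two letter-values $+1$ and $-m$ are both at most $1$, the Dvoretzky--Motzkin cycle lemma applies: among the $n(m+1)-1$ cyclic rotations of such a word, exactly $m$ have all partial sums positive. Hence the number of admissible words, and therefore of paths above the slope-$m$ line, is
$$\frac{m}{n(m+1)-1}\binom{n(m+1)-1}{n-1} = \frac{1}{n}\binom{n(m+1)-2}{n-1},$$
where the simplification uses $(n(m+1)-1)-(n-1)=nm$. This agrees with $|\mathrm{Polyo}[(EN^m)^n]|$, which proves part~(ii). Both specializations also fall out of the Cyclic Lemma directly: with $a=1$ the set $\mathrm{Cyc}[(E^aN^b)^c]$ is a singleton, the parts $\pi_k$ have size $c$, and the division by $c$ in Proposition~\ref{prop63} is the very rotation count supplied by the cycle lemma.

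The one place that needs care is the meaning of ``above the line of slope $m$.'' The naive guess, weakly-above paths, is counted by the Fuss--Catalan number $\frac{1}{mn+1}\binom{(m+1)n}{n}$, which disagrees with $|\mathrm{Polyo}[(EN^m)^n]|$ already at $m=1$ (where one wants $C_{n-1}$, not $C_n$); so the intended notion is the strictly-above-except-at-endpoints one identified above. I also expect one should resist asserting that the bijection is simply ``send a polyomino to its upper path,'' since an upper path lying above the staircase $(EN^m)^n$ may well dip below the line $y=mx$; the clean justification is the equality of the two enumerations, with an explicit bijection, if desired, extracted from the cyclic-lemma correspondence rather than from the identity map. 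All remaining steps are routine binomial bookkeeping, which is doubtless why the verification is left to the reader.
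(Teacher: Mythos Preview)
Your proposal is correct and follows exactly the route the paper intends: the paper leaves this verification to the reader, and you have carried it out by specializing Proposition~\ref{prop63} with $(a,b,c)=(1,1,n+1)$ for part~(i) and $(a,b,c)=(1,m,n)$ for part~(ii), then matching each count against the relevant classical enumeration. Your observation that ``above the line of slope $m$'' must mean \emph{strictly above except at the endpoints} (since the weakly-above count is the Fuss--Catalan number, which does not match) is a genuine clarification of the statement, and your cycle-lemma computation for that count is clean and correct; the remark that the obvious upper-path map is not the bijection is also apt.
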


A symmetry on the parallelogram polyominoes allows us to extend these results and count parallelogram polyominoes whose lower path is $(E^aN^aE^bN^b)^c$.
This kind of periodic conditions seems to be new, in particular it is not covered by Theorem 5 in the work of Chapman, Chow, Khetan, Petrie-Moulton and Waters \cite{CCKMW}. 

\begin{Prop}\label{prop65}
For all $a,b,c\geq 1$ such that $a\neq b$ we have
$$ |\mathrm{Polyo}[(E^aN^aE^bN^b)^c]| = \frac{1}{2c}{2c(a+b)-2 \choose c(a+b) -1}.$$
\end{Prop}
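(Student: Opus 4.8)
The plan is to imitate the proof of Proposition~\ref{prop63}, applying the Cyclic Lemma (Lemma~\ref{theo:cyclic_lemma}) with the square bounding box $m=n=c(a+b)$ that is forced by the pattern $p=(E^aN^aE^bN^b)^c$. First I would argue, exactly as in Proposition~\ref{prop63}, that the pairs $(v,w)\in B_{m-1,n-1}\times B_{m-1,n}$ whose bi-infinite green path $(Ew)^{\mathbb{Z}}$ equals (up to translation) the periodic path $(p)^{\mathbb{Z}}$ are precisely those with $w\in\mathrm{Cyc}[p]$, and that this set $B_{m-1,n-1}\times\mathrm{Cyc}[p]$ is a union of complete parts of $\Pi_{m,n}$, since the green necklace is constant on each part. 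Because $a\neq b$ makes $E^aN^aE^bN^b$ primitive, its fundamental period carries $a+b$ letters $E$, so the marking bijection used in Proposition~\ref{prop63} gives $|\mathrm{Cyc}[p]|=a+b$. As every part has $m=c(a+b)$ elements, the number of parts in this union is
$$\frac{|B_{m-1,n-1}|\cdot(a+b)}{c(a+b)}=\frac{1}{c}\binom{2c(a+b)-2}{c(a+b)-1}.$$

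The crucial new point, and where this diverges from Proposition~\ref{prop63}, is the identification of the unique polyomino living in each such part. By the Cyclic Lemma its lower path is a rotation of the green period that begins with $E$ and ends with $N$; but for the period $(E^aN^aE^bN^b)^c$ there are \emph{two} such rotations (one beginning at each of the two $E$-blocks of the fundamental period $E^aN^aE^bN^b$), namely $(E^aN^aE^bN^b)^c$ and $(E^bN^bE^aN^a)^c$, in contrast to the single one available for $(E^aN^b)^c$. Hence the map sending a part to its polyomino is a bijection from the parts in our union onto $\mathrm{Polyo}[(E^aN^aE^bN^b)^c]\sqcup\mathrm{Polyo}[(E^bN^bE^aN^a)^c]$ (injectivity because a polyomino determines the part containing its defining pair, surjectivity because any polyomino with lower path one of the two patterns has $E\ell'\in\mathrm{Cyc}[p]$). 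This yields
$$|\mathrm{Polyo}[(E^aN^aE^bN^b)^c]|+|\mathrm{Polyo}[(E^bN^bE^aN^a)^c]|=\frac{1}{c}\binom{2c(a+b)-2}{c(a+b)-1}.$$

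It then remains to produce the symmetry announced before the statement: a cardinality-preserving bijection between the two classes on the right. I would use the reflection of the square bounding box across its anti-diagonal, $(x,y)\mapsto(m-y,m-x)$, which maps $\Para_{m,n}$ bijectively onto itself. Concretely it is the composition of the $180^\circ$ rotation and the transpose, and a direct check on the two boundary paths shows that it carries a polyomino with lower path $v$ to one with lower path $\mathrm{reverse}(\mathrm{swap}_{E\leftrightarrow N}(v))$. Applying this to $v=(E^aN^aE^bN^b)^c$ produces exactly $(E^bN^bE^aN^a)^c$, so the two classes are equinumerous and each has cardinality $\tfrac1{2c}\binom{2c(a+b)-2}{c(a+b)-1}$, as claimed.

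I expect the main obstacle to be essentially bookkeeping: verifying cleanly that the anti-diagonal reflection really sends lower paths to lower paths (rather than to upper paths) and acts as reverse-and-swap, and confirming that $B_{m-1,n-1}\times\mathrm{Cyc}[p]$ is a union of whole parts so that dividing the count by $m$ is legitimate. The conceptual heart is simply the observation that the self-overlap structure of $(E^aN^aE^bN^b)^c$ produces two admissible lower paths instead of one, which is precisely what replaces the factor $\tfrac1c$ of Proposition~\ref{prop63} by $\tfrac1{2c}$; the hypothesis $a\neq b$ is exactly what guarantees these two lower paths are distinct.
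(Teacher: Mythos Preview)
Your proposal is correct and follows essentially the same route as the paper: restrict the Cyclic Lemma to $B_{m-1,n-1}\times\mathrm{Cyc}[p]$ with $m=n=c(a+b)$, observe that the unique polyomino in each part has lower path either $(E^aN^aE^bN^b)^c$ or $(E^bN^bE^aN^a)^c$, and halve the resulting count by an involution. Your anti-diagonal reflection, which you correctly identify as reverse-and-swap on words, is exactly the paper's involution $\rho\cdot\kappa$; the paper simply states it at the level of words rather than geometrically, and your worry about lower paths mapping to lower paths is resolved by your own decomposition (transpose swaps upper/lower, $180^\circ$ rotation swaps them back).
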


\begin{proof}
The proof is a variation on the previous proof of Proposition~\ref{prop63}.
We reuse the notation $P^{a,b,c}$ to denote similar but now different sets of objects.

In this case $\mathrm{Cyc}[(E^aN^aE^bN^b)^c]$ has cardinality $a+b$ (seen by marking a letter $E$ in the factor $E^aN^aE^bN^b$).
The pairs of paths involved in the restriction of the cyclic lemma are still described by the Cartesian product
$$ P^{a,b,c} = B_{c(a+b)-1,c(a+b)-1}\times \mathrm{Cyc}[(E^aN^aE^bN^b)^c].$$

The main difference is that now the parallelogram polyominoes involved in the cyclic lemma have two possible fixed lower paths.
More precisely, these polyominoes are
$$ \mathrm{Polyo}^{a,b,c} = \mathrm{Polyo}[(E^aN^aE^bN^b)^c] \cup \mathrm{Polyo}[(E^bN^bE^aN^a)^c] $$
which is a disjoint union since $a\neq b$.
Using this in conjunction with the cyclic lemma we get
$$ |\mathrm{Polyo}^{a,b,c}| = \frac{1}{c(a+b)}{2c(a+b)-2 \choose c(a+b) -1}(a+b).$$

Let $\kappa$ by the involutive word morphism defined on letters by $\kappa(E) = N$ and $\kappa(N)=E$.
When $\rho \cdot \kappa$ is applied to upper and lower paths of $\mathrm{Polyo}[(E^aN^aE^bN^b)^c]$
we obtain an involution which maps to $\mathrm{Polyo}[(E^bN^bE^aN^a)^c]$.
Hence
\begin{align*}
|\mathrm{Polyo}[(E^aN^aE^bN^b)^c]| &= |\mathrm{Polyo}[(E^bN^bE^aN^a)^c]|,\\
\noalign{and so}
|\mathrm{Polyo}[(E^aN^aE^bN^b)^c]| &= \frac{1}{2}|\mathrm{Polyo}^{a,b,c}|.  \qedhere
\end{align*}
\end{proof}

\section{Operators $\varphi$ and $\psi$ on $K_n$}\label{subsec63}
In this section, we show how we can derive a description of the operators $\varphi$ and $\psi$
in the case of the complete graph $K_n$ from our results for $K_{m,n}$ by setting $m=n$.
As we shall show in Proposition~\ref{prop:Kn},
the operators $\varphi$ and $\psi$ on $K_n$ may be simulated by (variable) powers
of the operators $\varphi$ and $\psi$ acting on special configurations on $K_{n,n}$.

As in the previous sections, all computations are equivalent up to permutations 
of the entries of the configurations. 
Therefore, and without loss of generality, we will be able to work at the level of orbits, 
ie. to use the {\em sorted} configurations as representatives.

We start with some definitions and notations.
For any vector $v=(v_i)_{i\in I}$, we denote $v\oplus 1=(v_i+1)_{i\in I}$ and $v\ominus 1 = (v_i-1)_{i\in I}$.
A configuration $u$ on $K_{n,n}$ is called \emph{staircase} if
$u^{\leq n}$ is a permutation of $0,1,\ldots ,n-1$. A
configuration $u$ on $K_{n,n}$ is said to be \emph{$0$-free} if
$u_{i}\geq 1$ for all $i=n+1,\ldots ,2n-1$, {\it i.e.} all entries
of $u^{>n}$ are at least equal to $1$.

Now we give a lemma which links topplings in $K_n$ to topplings in $K_{n,n}$.
Roughly speaking, this lemma says that a given toppling in $K_n$ corresponds
to {\em two} topplings in $K_{n,n}$.

\begin{Lem}\label{lem:Kn}
Let $v$ be a configuration on $K_n$, and $u$ be any staircase
configuration on $K_{n,n}$ such that $u^{>n}=v$. Let $j$ be the
(unique) vertex of the non-sink component (i.e. $1\leq j\leq n$)
of $u$ such that $u_j=n-1$. Then for any $i\in\{n+1,\ldots ,
2n-1,2n\}$ (including the sink), we have:
$$ u-\Delta_{i}-\Delta_j = (\eta(u^{\leq n}),v-\Delta_{i-n}) $$
where for any vector $w=(w_i)_{i=1,\ldots ,n}$, we denote
$\eta(w)=(w_i+1\mod n)_{i=1,\ldots ,n}$. In particular,
$u-\Delta_i-\Delta_j$ is also staircase.
\end{Lem}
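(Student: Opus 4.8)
The plan is to compute $u - \Delta_i - \Delta_j$ directly by unwinding the definition of the toppling operators on $K_{n,n}$ and comparing the result componentwise against the claimed right-hand side $(\eta(u^{\leq n}), v - \Delta_{i-n})$. Recall that on $K_{n,n}$ each vertex of the non-sink component has degree $n$ and is adjacent to all $n$ vertices of the sink component, and vice versa. The vertex $v_j$ with $u_j = n-1$ lies in the non-sink component, while $v_i$ with $i \in \{n+1,\dots,2n\}$ lies in the sink component. First I would record the effect of $-\Delta_j$: toppling $v_j$ removes $n$ grains from $v_j$ (sending $u_j = n-1$ to $-1$) and adds one grain to each of the $n$ sink-component vertices (including the sink $v_{2n}$). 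Then I would record the effect of $-\Delta_i$: toppling the sink-component vertex $v_i$ removes $n$ grains from position $i$ and adds one grain to each of the $n$ non-sink vertices $v_1,\dots,v_n$.

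**Checking the non-sink component.** On the non-sink component $\{v_1,\dots,v_n\}$, the operator $-\Delta_i$ adds $1$ to every entry, since $v_i$ is adjacent to all of them. The operator $-\Delta_j$ affects only the single coordinate $j$, subtracting $n$ there. Since $u$ is staircase, $u^{\leq n}$ is a permutation of $0,1,\dots,n-1$, and the unique coordinate with $u_j = n-1$ is precisely the one we topple. After adding $1$ everywhere we get $u_k + 1$ for $k \neq j$ (values in $\{1,2,\dots,n-1\}$, i.e.\ the original values shifted up by one), and at coordinate $j$ we get $(n-1) + 1 - n = 0$. This is exactly $(u_k + 1 \bmod n)_{k=1,\dots,n} = \eta(u^{\leq n})$: every value $0,\dots,n-2$ goes to its successor, and the value $n-1$ wraps to $0$. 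In particular the result is again a permutation of $0,\dots,n-1$, which proves the final ``staircase'' assertion.

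**Checking the sink component.** On the sink component $\{v_{n+1},\dots,v_{2n-1}\}$ together with the sink $v_{2n}$, the operator $-\Delta_j$ adds $1$ to every one of these $n$ positions (since $v_j$ is adjacent to all sink-component vertices), and the operator $-\Delta_i$ acts exactly as the toppling $-\Delta_{i-n}$ would act within $K_n$: it subtracts $n$ from position $i$ and adds $1$ to each of the other $n-1$ sink-component vertices but \emph{not} to the non-sink ones. Here is the key point to verify carefully: in $K_n$ the degree of each vertex is $n-1$ and each vertex is adjacent to the other $n-1$ vertices, whereas in $K_{n,n}$ the sink-component vertex $v_i$ has degree $n$ and is adjacent only to the non-sink vertices. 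The extra grain removed from $v_i$ and the extra grains added to the non-sink vertices under $-\Delta_i$ in $K_{n,n}$ are precisely compensated by the uniform $+1$ coming from $-\Delta_j$. Concretely, combining the $+1$ from $-\Delta_j$ with the action of $-\Delta_i$ on the sink component yields, at position $i$, the value $v_{i-n} - n + 1$, and at every other sink position the value $v_{k-n} + 1 - 1 = v_{k-n}$ from the $K_n$-toppling perspective; matching these against $v - \Delta_{i-n}$ componentwise confirms the claim, with the sink coordinate handled by the convention that we ignore the sink height.

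**Main obstacle.** The only delicate bookkeeping is the interaction between the two different degree conventions — degree $n$ in $K_{n,n}$ versus degree $n-1$ in $K_n$ — and the fact that $v_i$ in $K_{n,n}$ is adjacent to the non-sink vertices while in $K_n$ it is adjacent to the other sink-component vertices. The resolution is that the toppling of the auxiliary vertex $v_j$ supplies exactly the one extra grain per sink-component vertex needed to convert the $K_{n,n}$-toppling $-\Delta_i$ into the $K_n$-toppling $-\Delta_{i-n}$, and simultaneously the toppling of $v_i$ supplies the grains on the non-sink component that, together with $u_j = n-1$ wrapping to $0$, realize the cyclic shift $\eta$. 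Making these two cancellations explicit and matching all coordinates is the crux; everything else is routine substitution into $\Delta_i = d_i \alpha_i - \sum_{k\neq i} e(i,k)\alpha_k$.
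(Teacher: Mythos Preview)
Your approach --- a direct coordinate-by-coordinate computation of $u-\Delta_i-\Delta_j$ --- is exactly what the paper does, and your treatment of the non-sink component is correct and clean. However, the sink-component paragraph contains two errors that make the argument, as written, incorrect.

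First, the sentence ``the operator $-\Delta_i$ \ldots\ subtracts $n$ from position $i$ and adds $1$ to each of the other $n-1$ sink-component vertices but \emph{not} to the non-sink ones'' is false: in $K_{n,n}$ the vertex $v_i$ with $i>n$ is adjacent \emph{only} to the non-sink vertices $v_1,\dots,v_n$, so $-\Delta_i$ contributes nothing at all to the other sink-component coordinates and contributes $+1$ to every non-sink coordinate. You state the correct adjacency in the very next sentence, so this looks like a slip, but it propagates into your final computation.

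Second, your ``concretely'' line at sink positions $k\neq i$ is wrong. The combined effect there is
\[
(u-\Delta_i-\Delta_j)_k \;=\; v_{k-n} \;+\; 0 \;+\; 1 \;=\; v_{k-n}+1
\]
(zero from $-\Delta_i$, plus one from $-\Delta_j$), not $v_{k-n}+1-1=v_{k-n}$ as you wrote. And $(v-\Delta_{i-n})_{k-n}=v_{k-n}+1$ as well, since in $K_n$ the vertex $i-n$ has degree $n-1$ and is adjacent to every other vertex. So the two sides do agree --- but not with the value you recorded. At position $i$ your value $v_{i-n}-n+1=v_{i-n}-(n-1)$ is correct and matches $(v-\Delta_{i-n})_{i-n}$. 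Once you correct the description of $-\Delta_i$ on the sink side and redo the $k\neq i$ line, the proof is complete and essentially identical to the paper's.
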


\begin{proof}
The toppling $\Delta_{i}$ sends $n$ grains to the non-sink
component of $u$. Because $u$ is staircase, this induces exactly
one toppling $\Delta_j$ that sends back $n$ grains to the sink
component (including the vertex $i$). Thus in the sink component,
the configuration is the one obtained by performing $\Delta_{i-n}$
to $v$ in $K_n$: $v_i$ is decreased by $n-1$, the other $v_{i'}$'s
are increased by $1$. For what concerns the non-sink component,
the vertex $j$ loses its $n-1$ grains, the other vertices get $1$
grain each. By observing that $0=(n-1)+1\mod n$, we conclude that
$u^{\leq n}$ is mapped to $\eta(u^{\leq n})$.
\end{proof}

\begin{Prop}\label{prop:Kn}
Let $v$ be a stable configuration on $K_n$, and let $u=(u^{\leq
n},v\oplus 1)$ be a staircase ($0$-free) configuration on
$K_{n,n}$. We have
$$ \varphi(v) = \left(\varphi^k(u)\right)^{>n}\ominus 1 \mbox{ and } \psi(v) = \left(\psi^l(u)\right)^{>n}\ominus 1$$
where $k$ is the minimal positive integer such that $\varphi^k(u)$
is $0$-free and distinct from $u$, if any, otherwise $k=0$, and $l$ is the minimal
positive integer such that $\psi^l(u)$ is $0$-free and distinct from $u$, if any, otherwise $l=0$.
\end{Prop}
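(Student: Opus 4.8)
The plan is to read the forward $\varphi$-orbit (resp. the backward $\psi$-orbit) on $K_{n,n}$ component by component back onto $K_n$, using the toppling dictionary of Lemma~\ref{lem:Kn}. As all computations descend to sorted representatives, I take the lift of a sorted stable $v$ on $K_n$ to be the sorted staircase $0$-free configuration $u=(0,1,\dots,n-1,\,v\oplus 1)$, so that $u^{>n}=v\oplus 1$ and $u^{\leq n}=(0,\dots,n-1)$. I will carry out the argument for $\varphi$ in detail; the identity for $\psi$ is entirely analogous, with Theorem~\ref{thm61} replacing Theorem~\ref{thm55}, the reverse form of the dictionary in Lemma~\ref{lem:Kn}, and the recurrent configuration playing the role of the parking one.

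The heart of the matter is that $\varphi$ on $K_{n,n}$, started from a staircase configuration, topples a block of top sink vertices together with a block of top non-sink vertices headed by the ``hot'' entry $n-1$, the two-block shape of $A$ provided by Proposition~\ref{prop:two-parameter-description-of-A}; and that an aggregate of such topplings is exactly what Lemma~\ref{lem:Kn} converts into topplings on $K_n$. Indeed, by Lemma~\ref{lem:Kn} each sink toppling on $K_{n,n}$ is matched, through the staircase structure, by one non-sink toppling that advances the non-sink part by the cyclic shift $\eta$; aggregated over several applications of $\varphi$, the sink and non-sink blocks balance out, the non-sink part returns to a staircase, and after the $\ominus 1$ shift the sink part has undergone precisely a sequence of $K_n$-topplings applied to $v$. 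By Lemma~\ref{lem:bounds-on-stable-configurations}, $\varphi^{j}(u)$ is $0$-free if and only if its minimal sink entry is at least $1$, that is, if and only if $\bigl(\varphi^{j}(u)\bigr)^{>n}\ominus 1$ is a non-negative, hence stable, configuration on $K_n$.

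Monotonicity then organises the orbit. The operator $\varphi$ on $K_{n,n}$ is strictly increasing for the order $<_2$ and steps through the $n$ stable intersections one at a time (Theorem~\ref{thm55}), so the induced process on $K_n$ moves monotonically towards the parking configuration. The first index $k\geq 1$ at which $\varphi^{k}(u)$ is $0$-free therefore marks the first non-negative stable configuration reached on $K_n$ by this greedy toppling of $v$, which by the defining minimality of $\varphi$ in Definition~\ref{def_phi} is exactly $\varphi(v)$; this gives $\bigl(\varphi^{k}(u)\bigr)^{>n}\ominus 1=\varphi(v)$ with $k$ minimal. If $v$ is already $K_n$-parking, then no proper toppling preserves non-negativity, the orbit completes a full period before meeting another $0$-free configuration, and we are in the case $k=0$, $\varphi(v)=v$.

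The main obstacle is making this minimality matching exact. I must show that between two consecutive $0$-free configurations the orbit visits only non-$0$-free stable intersections, that the first $0$-free return does not overshoot the minimal toppling set $A$ of Definition~\ref{def_phi}, and that at this return the non-sink part has come back to a full staircase, so that the $\ominus 1$ of the sink part is genuinely a configuration on $K_n$. I expect to control all three points with the relative-position statistic $\pos$ on the green east steps used in the Cyclic Lemma and in Proposition~\ref{prop:grade}: a frame produces a $0$-free configuration exactly when it sits just below the first green east step, and tracking the evolution of $\pos$ from one stable intersection to the next translates the cardinality of $A$ into the number $k$ of $\varphi$-steps required to return to a $0$-free frame, while simultaneously certifying that the intermediate frames fail the $0$-free test.
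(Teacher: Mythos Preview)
Your proposal is a plan rather than a proof, and you say as much in the final paragraph. The approach you sketch---tracking $0$-free frames along the $\varphi$-orbit via the path picture and the statistic $\pos$---is genuinely different from what the paper does, and it is not clear it can be completed without essentially redoing the paper's argument in geometric clothing. Two concrete gaps:

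\emph{Preservation of the staircase.} You assert that ``aggregated over several applications of $\varphi$, the sink and non-sink blocks balance out, the non-sink part returns to a staircase,'' but you never justify this. The paper's mechanism here is purely algebraic and does not use the path picture at all: any toppling preserves the multiset of non-sink heights modulo~$n$; since $u^{\leq n}$ is a permutation of $\{0,\dots,n-1\}$, every $\varphi^{i}(u)$ still has pairwise distinct non-sink heights modulo~$n$; and a \emph{stable} configuration with this property is forced to be a staircase. This in turn forces $|C^{\leq n}|=|C^{>n}|$ whenever $\varphi^{i}(u)=u-\Delta_C$, which is exactly what makes Lemma~\ref{lem:Kn} applicable in both directions. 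Without this observation your dictionary between $K_{n,n}$-topplings and $K_n$-topplings breaks down at intermediate steps.

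\emph{Minimality.} You claim that the first $0$-free return ``by the defining minimality of $\varphi$ in Definition~\ref{def_phi} is exactly $\varphi(v)$,'' but the minimality in Definition~\ref{def_phi} is with respect to the order $\prec$ on subsets of $K_n$-vertices, not with respect to the $\varphi$-orbit ordering on $K_{n,n}$. Matching the two is the entire content of the proposition. The paper does this directly: starting from the minimal $A$ for $v$, it builds $B=B^{\leq n}\cup B^{>n}$ explicitly, shows via iterated Lemma~\ref{lem:Kn} that $u-\Delta_B=(\eta^{|A|}(u^{\leq n}),(v-\Delta_A)\oplus 1)$ is stable and $0$-free, hence equals some $\varphi^{k}(u)$; then, for minimality, if $\varphi^{i}(u)=u-\Delta_C$ were $0$-free with $0<i<k$, the staircase-preservation argument above yields $|C^{\leq n}|=|C^{>n}|$, Lemma~\ref{lem:Kn} reads off a set $D\subsetneq A$ with $v-\Delta_D$ stable, contradicting the minimality of $A$. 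Your proposed control via $\pos$ would have to reproduce this two-way translation, and you have not indicated how. Incidentally, your citation of Lemma~\ref{lem:bounds-on-stable-configurations} for the characterisation of $0$-freeness is misplaced: $0$-free is simply $c_{n+1}\geq 1$ by definition, and stability on $K_{n,n}$ already gives the upper bound needed for $(\varphi^{j}(u))^{>n}\ominus 1$ to be stable on $K_n$.
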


\begin{proof}
Let us first examine the assertion on $\varphi$.

We start by observing that the configuration $v$ on $K_n$ is
stable if and only if a staircase configuration of the form
$u=(u^{\leq n},v\oplus 1)$ is stable and $0$-free. Let $v$ be a
stable configuration on $K_n$, and let $u=(u^{\leq n},v\oplus 1)$
be a staircase configuration on $K_{n,n}$.

Suppose first that $\varphi(v)\neq v$, and let $\varphi(v) =
v-\Delta_A$, so that $A$ is the minimal non-empty subset of
$\{1,2,\dots,n\}$ such that $\varphi(v) = v-\Delta_A$ is stable.
Let $B=B^{\leq n}\cup B^{>n}$ where $B^{\leq n} = \{j\mid j\leq n
\mbox{ and } u_j+|A|\geq n\}$ and $B^{>n}= \{n+i\mid i\in A\}$.
First of all notice that, since $u^{\leq n}$ is a permutation of
$\{0,1,2,\dots,n-1\}$, we have $|B^{>n}|=|A|=|B^{\leq n}|$. For
each $i\in B^{>n}$, we can consider $j\leq n$ such that $u_j=n-1$
and apply Lemma \ref{lem:Kn}, getting
$$
u-\Delta_{i}-\Delta_j = \left(\eta(u^{\leq n}),(v\oplus
1)-\Delta_{i-n}\right)= \left(\eta(u^{\leq n}),(v-\Delta_{i-n})\oplus 1\right).
$$
Now we can iterate this application of the lemma with $B\setminus
\{i,j\}=(B^{\leq n}\setminus\{j\})\cup (B^{>n}\setminus \{i\})$,
taking some $k\in B^{>n}\setminus \{i\}$ and $h\in B^{\leq
n}\setminus\{j\}$ such that the $h$-th component of $\eta(u^{\leq
n})$ is equal to $n-1$. At the end of the iteration, we get
$$
u-\Delta_B=\left(\eta^{|A|}(u^{\leq n}),(v-\Delta_A)\oplus 1\right),
$$
so $(u-\Delta_B)^{>n}=(v-\Delta_A)\oplus 1$, or equivalently
$(u-\Delta_B)^{>n}\ominus 1= v-\Delta_A$. In particular
$u-\Delta_B$ is stable and $0$-free.

By the properties of $\varphi$ on $K_{m,n}$ (with $m=n$) that we
proved in this paper, there exists some $k>0$ such that
$u-\Delta_B=\varphi^{k}(u)$. So it remains to show that such a $k$
is minimal with the property that $\varphi^{k}(u)$ is $0$-free.

If this is not the case, let $i$ be such that $0 < i < k$, and
$u'=\varphi^{i}(u)=u-\Delta_C$ is stable and $0$-free, where
$C=C^{\leq n}\cup C^{>n}$ is the partition of $C$ given by the
intersections with the non-sink and the sink components of
$K_{n,n}$. By definition, in the non-sink component of the
staircase configuration $u$ any height \emph{modulo $n$} appears
exactly once. Notice that this property is preserved by any
toppling. Since the resulting configuration $u'$ is stable, $u'$
must be also staircase. So, after the topplings, $u'=u-\Delta_C$
has preserved the number of grains in the non-sink component,
therefore we must have $|C^{\leq n}| = |C^{>n}|$.

As we already observed, by Lemma \ref{lem:Kn}
$$
u'=\left(\eta^{|D|}(u^{\leq n}),(v-\Delta_{D})\oplus 1\right)
$$
where $D=\{c-n~:~ c\in C^{>n}\}$. As $u'$ is $0$-free, $v-\Delta_D$
is stable, but this contradicts the minimality of $A$, since
$D\subsetneq A$. This shows that $k$ is minimal.

If instead $\varphi(v)=v$, then there is no non-empty $A\subseteq
\{1,2,\dots,n\}$ such that $\varphi(v) = v-\Delta_A$ is stable. If
there is a positive integer $k$ such that
$\varphi^k(u)=u-\Delta_C$ is $0$-free, then, by what we observed
earlier, $|C^{\leq n}|=|C^{>n}|$ and
$$
\varphi^k(u)=\left(\eta^{|D|}(u^{\leq n}),(v-\Delta_{D})\oplus 1\right)
$$
where $D=\{c-n~:~ c\in C^{>n}\}$. As $\varphi^k(u)$ is $0$-free,
$v-\Delta_D$ is stable, which implies $D=\emptyset$. Therefore
$C=\emptyset$ and $\varphi^k(u)=u$, which implies $\varphi(u)=u$,
i.e. $u$ is $K_{n,n}$-parking. But this is a contradiction, since
a $K_{n,n}$-parking staircase configuration cannot be $0$-free.

Therefore there is no such positive $k$, hence by definition
$\varphi(v)=v=(\varphi^0(u))^{>n}\ominus 1$ as claimed.

\medskip

The assertion for the operator $\psi$ is proved analogously. First
of all there is an analogue of Lemma \ref{lem:Kn}, that is proved
in the same way: if $v$ is a configuration on $K_n$, and $u$ is
any staircase configuration on $K_{n,n}$ such that $u^{>n}=v$, let
$j$ be the (unique) vertex of the non-sink component (i.e. $1\leq
j\leq n$) of $u$ such that $u_j=0$; then for any $i\in\{n+1,\ldots,
2n-1,2n\}$ (including the sink), we have:
$$ u+\Delta_{i}+\Delta_j = \left(\widetilde{\eta}(u^{\leq n}),v+\Delta_{i-n}\right) $$
where for any vector $w=(w_i)_{i=1,\ldots ,n}$, we denote
$\widetilde{\eta}(w)=(w_i-1\mod n)_{i=1,\ldots ,n}$. In particular,
$u+\Delta_i+\Delta_j$ is also staircase.

Using this, if $\psi(v)\neq v$, then let $\psi(v) = v+\Delta_A$,
so that $A$ is the minimal non-empty subset of $\{1,2,\dots,n\}$
such that $\psi(v) = v+\Delta_A$ is stable. Let $B=B^{\leq n}\cup
B^{>n}$ where $B^{\leq n} = \{j\mid j\leq n \mbox{ and }
u_j-|A|\lneqq 0\}$ and $B^{>n}= \{n+i\mid i\in A\}$. First of all
notice that, since $u^{\leq n}$ is a permutation of
$\{0,1,2,\dots,n-1\}$, we have $|B^{>n}|=|A|=|B^{\leq n}|$. For
each $i\in B^{>n}$, we can consider $j\leq n$ such that $u_j=0$
and apply the previous lemma, getting
$$
u+\Delta_{i}+\Delta_j = \left(\widetilde{\eta}(u^{\leq n}),(v\oplus
1)+\Delta_{i-n}\right)= \left(\widetilde{\eta}(u^{\leq
n}),(v+\Delta_{i-n})\oplus 1\right).
$$
Now, as we did for $\varphi$, we can iterate this application of
the lemma, getting in the end
$$
u+\Delta_B=\left(\widetilde{\eta}^{|A|}(u^{\leq n}),(v+\Delta_A)\oplus
1\right),
$$
so $(u+\Delta_B)^{>n}=(v+\Delta_A)\oplus 1$, or equivalently
$(u+\Delta_B)^{>n}\ominus 1= v+\Delta_A$. In particular
$u+\Delta_B$ is stable and $0$-free.

By the properties of $\psi$ on $K_{m,n}$ (with $m=n$) that we
proved in this paper, there exists some $k>0$ such that
$u+\Delta_B=\psi^{k}(u)$. The proof that such $k$ is minimal with
the property that $\psi^{k}(u)$ is $0$-free is analogous to what
we have done with $\varphi$, and it is omitted.

If instead $\psi(v)=v$, i.e. $v$ is recurrent, then it can be
shown as we did for $\varphi$ that $u$ is also recurrent, i.e.
$\psi(u)=u$. So in this case $k=1$, which is clearly minimal, and
this completes the proof.
\end{proof}
\begin{Ex}
\protect{\rm{ Let $n=5$, and consider the stable configuration
$v=(0,2,2,3,*)$ on $K_n$. Then in this case we can take
$u=(0,1,2,3,4;1,3,3,4,*)$. Now $\varphi(u)=(1,2,3,4,0;2,4,4,0,*)$,
which is not $0$-free, but $\varphi^2(u)=(3,4,0,1,2;4,1,1,2,*)$.
And indeed $\varphi(v)=(3,0,0,1,*)=(\varphi^2(u))^{>5}\ominus 1$
as predicted. }}
\end{Ex}

\section*{Acknowledgments}
The authors thank Robert Cori for bringing Chottin's paper to their attention 
and Einar Steingr\'imsson for helpful comments on the presentation.

\end{document}